\documentclass[12pt]{amsart}

\usepackage[margin=1.20in]{geometry}

\usepackage{amssymb,amsmath}
\usepackage{amsthm,enumitem}
\usepackage{hyperref}
\usepackage{mathrsfs}
\usepackage{textcomp}
\usepackage{comment}
\usepackage{xcolor}
\hypersetup{
    colorlinks,%
    citecolor=black,%
    filecolor=black,%
    linkcolor=black,%
    urlcolor=black
}

\newtheorem{thm}{Theorem}[section]
\newtheorem{lem}[thm]{Lemma}

\newtheorem{cor}[thm]{Corollary}
\newtheorem{rem}[thm]{Remark}

\newtheorem*{op}{Question}

\numberwithin{equation}{section}

\def\XXint#1#2#3{{\setbox0=\hbox{$#1{#2#3}{\int}$}
\vcenter{\hbox{$#2#3$}}\kern-.5\wd0}}

           \newcommand{\ud}{\mathrm{d}}
\newcommand{\be}{\begin{equation}}      \newcommand{\ee}{\end{equation}}

\newcommand{\R}{\mathbb{R}}

\begin{document}

\title[Fast Diffusion Equations]{\textbf{Optimal Weighted Smoothing and Asymptotics of Ancient Solutions for Fast Diffusion Equations}}

\author{Beomjun Choi, Xiqin Jiang, Hua-Yang Wang, Jingang Xiong}

\address{Department of Mathematical Sciences, KAIST, Daejeon, Korea}
\email{\href{mailto:bchoi@kaist.ac.kr}{\texttt{bchoi@kaist.ac.kr}}}

\address{School of Mathematical Sciences, Laboratory of Mathematics and Complex Systems, MOE, Beijing Normal University, Beijing, China}

\address{Center for Basic Mathematics, Institute for Advanced Study, Beijing Normal University, Beijing, China}

\email{\href{mailto:jiang\_xq@mail.bnu.edu.cn}{\texttt{jiang\_xq@mail.bnu.edu.cn}}}
\email{\href{mailto:wanghuayang@amss.ac.cn}{\texttt{wanghuayang@amss.ac.cn}}}
\email{\href{mailto:jx@bnu.edu.cn}{\texttt{jx@bnu.edu.cn}}}

\thanks{J. Xiong was partially supported by NSFC grant 12325104.}

\subjclass[2020]{35K67, 35B40, 35B45}
\keywords{Fast diffusion equations, weighted smoothing effects, ancient solutions.}

\begin{abstract}
\setlength{\emergencystretch}{1em}
The Cauchy--Dirichlet problem for the fast diffusion equation on a smooth bounded domain admits a natural bound in the weighted space $L^p_{\Phi_1}$, where $\Phi_1$ is the first Dirichlet eigenfunction. A key regularization question is whether this implies the stronger $L^\infty$ bound.
We provide a complete resolution, showing that the critical exponent coincides with the classical Brezis--Turner exponent from semilinear elliptic theory.

As a primary application, we derive improved global Harnack inequalities and describe asymptotic behavior of positive ancient solutions.
\end{abstract}

\maketitle

\section{Introduction}\label{sec:Introduction}

Let $T>0$, $p\in(1,\infty)$, and let $\Omega \subset \mathbb{R}^n$ ($n \geq 1$) be a smooth bounded domain. 
Let $x=(x_1,\dots,x_n)$ denote points in $\mathbb{R}^n$, and let $t$ denote the time variable. 
Here and below, $a_+:=\max\{a,0\}$, and any quotient with denominator $a_+$ is understood to be $+\infty$ when $a_+=0$.
We consider the Cauchy-Dirichlet problem for the fast diffusion equation:
\begingroup
\renewcommand{\theHequation}{CDP}
\begin{equation}\tag{\bf{CDP}}\label{eq:Cauchy-Dirichlet}
\begin{cases}
\partial_t u^p = \Delta u & \text{in } \Omega \times (0,T),\\
u = 0 & \text{on } \partial\Omega \times (0,T),\\
u(\cdot,0) = u_0 & \text{in } \Omega,
\end{cases}
\end{equation}
\endgroup
where $\Delta=\sum_{i=1}^n \frac{\partial^2}{\partial x_i^2}$ is the Laplace operator and $u_0 \geq 0$ is not identically zero. 
For background and mathematical studies of this problem, we refer to
Chapter 4 of Daskalopoulos--Kenig \cite{DK07}, Chapters 5--7 of
V\'azquez \cite{V07}, and the recent survey of Bonforte--Figalli
\cite{BF24}.

{Let $\Phi_1$ be the positive Dirichlet eigenfunction of $-\Delta$ in $\Omega$ associated with the first eigenvalue $\lambda_1>0$, normalized by $\|\Phi_1\|_{L^2(\Omega)}=1$.}
For $q\in(0,\infty)$, we define the weighted Lebesgue space
\[
L^q_{\Phi_1}(\Omega) := \Big\{ f \in L^q_{\mathrm{loc}}(\Omega) : 
\|f\|_{L^q_{\Phi_1}(\Omega)} := \Big( \int_{\Omega} |f|^q \, \Phi_1 \, \mathrm{d}x \Big)^{1/q} < \infty \Big\}.
\] 
Let $u_0\in L^p_{\Phi_1}(\Omega)$ and set
$u_{0,j}:=\min\{u_0,j\}$.  Denote by $u_j$ the weak solution to
\eqref{eq:Cauchy-Dirichlet} with initial datum $u_{0,j}$.  Since
$u_{0,j}\leq u_{0,j+1}$, it follows from the comparison principle that
$u_j\leq u_{j+1}$.  The monotone limit
\[
u(x,t):=\lim_{j\to\infty}u_j(x,t)
\quad\text{for a.e. }(x,t)\in\Omega\times(0,T),
\]
is called the \emph{limit solution} associated with $u_0$.  V\'azquez
\cite{V07} proved that this limit is well defined, independent of the chosen
monotone bounded approximation, and unique in the class of limit solutions;
see Section \ref{sec:Preliminaries} for the precise statement.
Such solutions exist globally in time but vanish in finite time. In what follows, we always assume $T=\infty$ and let $T^*>0$ denote the extinction time.
A natural question is to study the regularity of limit solutions.  
The most basic problem is whether they enjoy a weighted $L^p_{\Phi_1}$-$L^\infty$ smoothing effect.

A relatively recent development in this field is the notion of weak dual solutions, whose definition relies on the Green representation. 
The existence and uniqueness of minimal weak dual solutions have been established for generalized porous medium type equations; 
see for instance Bonforte-V\'azquez \cite{BV15}, Bonforte-Ibarrondo-Ispizua \cite{BII23}, and Bonforte-Figalli \cite{BF24}.

The global smoothing effects for \eqref{eq:Cauchy-Dirichlet} with  initial data belonging to the unweighted space $ L^q(\Omega)$ 
have been extensively studied. Specifically, it is known that there exists a constant $C = C(n,p,r) > 0$ 
such that for all $t > 0$,
\begin{equation}\label{eq:global_smoothing_effects}
\| u(\cdot, t) \|_{L^{\infty}(\Omega)} 
\leq C \, t^{-n\theta_r} \, \| u_0 \|_{L^r(\Omega)}^{2r\theta_r}, 
\quad \theta_r := \frac{1}{2r - n(p-1)},
\end{equation}
provided that
\begingroup
\renewcommand{\theHequation}{R1}
\begin{equation}\label{eq:initial_datum_range_1}\tag{\textbf{R1}}
\begin{split}
    r \geq p \quad &\text{if} \quad p \in \Big(1, \frac{n}{(n-2)_+}\Big),\\
r > \frac{n(p-1)}{2} \quad &\text{if} \quad p \in \Big[\frac{n}{(n-2)_+}, \infty \Big).
\end{split}
\end{equation}
\endgroup
See Herrero-Pierre \cite{HP85}, DiBenedetto \cite{D93}, Bonforte-Grillo \cite{BG06}, Bonforte-V\'azquez \cite{BV10}, DiBenedetto-Gianazza-Vespri \cite{DGV12}, and Bonforte-Ibarrondo-Ispizua \cite{BII23}. 

For $n\geq2$ and $u_0 \in L^p_{\Phi_1}(\Omega)$,
Daskalopoulos--Kenig~\cite{DK07} obtained an $L^\infty$ estimate for
smooth solutions in the restricted range $p<n/(n-1)$ (see
Lemma~4.6.2), yet without establishing any time-decay rate.
Recently, Bonforte-Ibarrondo-Ispizua \cite{BII23} established weighted smoothing effects for minimal weak dual solutions with $u_0 \in L^r_{\Phi_1}(\Omega)$. 
Specifically, for $n \geq 3$, they showed that for every $t > 0$, there exists a constant $C = C(n, r, p, \Omega) > 0$ such that
\begin{equation}\label{eq:wse_BII23}
\| u(\cdot, t) \|_{L^{\infty}(\Omega)} 
\leq C \, t^{-\frac{n}{r - n(p-1)}} 
\Big( \int_{\Omega} u_0^{r} \, \Phi_1 \, \mathrm{d}x \Big)^{\frac{1}{r - n(p-1)}},
\end{equation}
provided that
\[
r \geq p \quad \text{if} \quad p \in \Big(1, \frac{n}{n-1}\Big), 
\quad \text{and} \quad 
r > n(p-1) \quad \text{if} \quad p \in \Big[\frac{n}{n-1}, +\infty\Big).
\]
When $n\geq2$, our first main result removes the previous
restriction within the natural scale: we extend the admissible range
$r=p$ to the sharp threshold $p<(n+1)/(n-1)$, and, for larger $p$,
identify the corresponding lower bound on the weighted exponent $r$.
Consequently, we obtain a sharp weighted smoothing estimate in the
natural scale, which aligns with known results for semilinear elliptic
equations involving boundary singularities.

\begin{thm}\label{thm:weighted-smoothing_1}
Let $n \geq 1$, $1 < p < \infty$, and let $\Omega \subset \mathbb{R}^n$ be a smooth bounded domain.
Suppose that $u$ is a limit solution to \eqref{eq:Cauchy-Dirichlet} with 
$u_0 \in L^{r}_{\Phi_1}(\Omega)$, where $r > 0$ satisfies
\begingroup
\renewcommand{\theHequation}{R2}
\begin{equation}\label{eq:initial_datum_range_2}\tag{\textbf{R2}} 
\begin{split}
    r \geq p \quad &\text{if} \quad p \in \Big(1, \frac{n+1}{(n-1)_+}\Big), \\
r > \frac{(n+1)(p-1)}{2} \quad &\text{if} \quad p \in \Big[\frac{n+1}{(n-1)_+}, +\infty\Big).
\end{split}
\end{equation}
\endgroup
Then we have $u(\cdot, t) \in L^{\infty}(\Omega)$ for all $t>0$.
Moreover, there exists a constant $C = C(n, r, p, \Omega,q) > 0$ such that for all $t > 0$,
\begin{equation}\label{eq:main-smoothing}
\| u(\cdot, t) \|_{L^{\infty}(\Omega)} 
\leq C \, t^{-\frac{q}{r - q(p-1)}} 
\Big( \int_{\Omega} u_0^{r} \, \Phi_1 \, \mathrm{d}x \Big)^{\frac{1}{r - q(p-1)}},
\end{equation}
where $q = \frac{n+1}{2}$ if $n \geq 2$, while for $n=1$ one may choose any fixed $q\in\bigl(1,\frac{r}{p-1}\bigr)$.
\end{thm}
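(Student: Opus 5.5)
By approximation it suffices to prove \eqref{eq:main-smoothing} for bounded data $u_{0,j}$ with a constant independent of $j$, and then pass to the monotone limit. The proof has four steps: a weighted $L^1$ bound, a pointwise bound via the Green function, an absorption/iteration argument, and a final scaling step.

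\textbf{Step 1: weighted $L^1$ control of $u^p$.} Test the equation with $\Phi_1$:
\[
\frac{d}{dt}\int_\Omega u^p\Phi_1 \,\ud x=-\lambda_1\int_\Omega u\Phi_1 \,\ud x\le 0 .
\]
Hence $\int u^p(t)\Phi_1$ is nonincreasing. More generally, testing with $u^{r-p}\Phi_1$-type weights (a Stroock--Varopoulos/Kato argument) shows that $\int u^r(t)\Phi_1$ is nonincreasing for $r\ge p$. For $r<p$ one uses $u^{r-p+1}$ with care. So we control $\sup_t \int u^r(t)\Phi_1$.

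\textbf{Step 2: pointwise bound through the Green function.} Following the weak dual approach (as in \cite{BV15,BII23}), the key representation comes from integrating against the Dirichlet Green function $G$. Using monotonicity in time (the B\'enilan--Crandall estimate $u_t\ge -\frac{u}{(p-1)t}$, i.e.\ $t^{1/(p-1)}u$ nondecreasing up to constants), one gets for $0<t_0<t$ the pointwise inequality
\[
u(x,t)\lesssim \frac{1}{t}\int_\Omega G(x,y)\,u^p(y,t_0)\,\ud y .
\]
The Green function satisfies the sharp boundary-sensitive estimate
\[
G(x,y)\le C\,|x-y|^{2-n}\min\Bigl\{1,\frac{\Phi_1(x)\Phi_1(y)}{|x-y|^2}\Bigr\}.
\]
For a fixed interior or boundary point $x$, $G(x,\cdot)\Phi_1^{-1}(\cdot)$ behaves like $|x-y|^{1-n}$ near the boundary in the worst case. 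This is exactly where the dimension $n+1$ enters: $G(x,\cdot)/\Phi_1\in L^{s}$ with $s<\frac{n}{n-1}\cdot\frac{n+1}{n}$-type exponent. Equivalently, $\Phi_1\,\ud y$ is an $(n+1)$-dimensional measure near $\partial\Omega$, and $G(x,\cdot)/\Phi_1$ lies in the weak space $L^{\frac{n+1}{n-1},\infty}(\Phi_1\ud y)$ uniformly in $x$. This mirrors the Brezis--Turner analysis.

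\textbf{Step 3: the main obstacle, absorption and iteration.} Write $u^p=u^{p-r/q'}\cdot u^{r/q'}$, or more directly apply H\"older in $L^q_{\Phi_1}$ with $q=\frac{n+1}{2}$, the dual exponent to $\frac{n+1}{n-1}$. This gives
\[
\|u(t)\|_\infty\lesssim t^{-1}\,\|u(t_0)\|_\infty^{\,p-r/q}\Bigl(\int u^r\Phi_1\Bigr)^{1/q}.
\]
This is valid precisely when the needed integrability holds, which yields the conditions $r\ge p$ for $p<\frac{n+1}{n-1}$ and $r>q(p-1)$ otherwise. The endpoint weak-type issue is avoided by the strict inequality or the margin $r\ge p$. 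Since $p-r/q<1$, exactly when $r>q(p-1)$, one applies this on dyadic times $t_k=t(1-2^{-k})$ and absorbs via a standard De~Giorgi/Young iteration lemma with finite sup (finiteness holds for bounded approximations). This gives
\[
\|u(t)\|_\infty\le C t^{-\frac{q}{r-q(p-1)}}\Bigl(\int u_0^r\Phi_1\Bigr)^{\frac1{r-q(p-1)}}.
\]
The delicate point is making the Green kernel estimate uniform up to the boundary with the correct weighted exponent. I expect this to be the hard step. In $n=1$, $G/\Phi_1$ is bounded, so any $q>1$ works.

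\textbf{Step 4: consistency with scaling.} Check that the exponents match the scaling $u\mapsto \lambda u(x,\lambda^{p-1}t)$. Then let $j\to\infty$.
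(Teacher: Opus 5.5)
\textbf{There are two genuine gaps.} Your route differs from the paper's. The paper never uses $G_\Omega$ in this proof: it combines a weighted Caccioppoli inequality (Lemma~\ref{lem:energy_estimate}) with the weighted Sobolev inequality obtained by lifting to the $(n+1)$-dimensional region under the graph of $\Phi_1$ (Lemma~\ref{lem:weighted-Poincare}). It then runs a Moser iteration starting directly from $L^r_{\Phi_1}$, and uses the De~Giorgi lemma only to descend to $q(p-1)<r\le\max\{2p,q(p-1)\}$.

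\emph{First gap: the endpoint kernel bound.} In your scheme everything hinges on
\[
\sup_{x\in\Omega}\int_\Omega\Bigl(\frac{G_\Omega(x,y)}{\Phi_1(y)}\Bigr)^{q'}\Phi_1(y)\,\ud y<\infty,\qquad q'=\frac{n+1}{n-1},
\]
in \emph{strong} form. You do not prove this, and you claim only weak-type membership, which is not enough.
\begin{itemize}
\item H\"older against $L^{q',\infty}_{\Phi_1}$ needs $u^{r/q}\in L^{q,1}_{\Phi_1}$. From $L^q_{\Phi_1}\cap L^\infty$ this costs a logarithmic factor.
\item Your ``margin'' escape replaces $q$ by some $s>\frac{n+1}{2}$. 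That gives $t^{-s/(r-s(p-1))}(\int u_0^r\Phi_1)^{1/(r-s(p-1))}$, which is not \eqref{eq:main-smoothing} and is weaker than it for $t\le\|u_0\|_{L^r_{\Phi_1}}^{p-1}$. Only the qualitative $L^\infty$ conclusion survives.
\end{itemize}
The strong bound is in fact true, but it needs the factor $d(x)$ in $G_\Omega(x,y)\le C|x-y|^{2-n}\min\{1,d(x)d(y)|x-y|^{-2}\}$ (for $n\ge3$; a logarithmic analogue holds for $n=2$).
\begin{itemize}
\item The ball $\{|x-y|<d(x)/2\}$ contributes $\lesssim d(x)^{1-q'}\int_0^{d(x)}\rho^{(2-n)q'+n-1}\,\ud\rho\lesssim1$.
\item On the shell $\{|x-y|\simeq 2^k d(x)\}$ one has $G_\Omega/\Phi_1\lesssim d(x)|x-y|^{-n}$ and $\Phi_1\lesssim|x-y|$. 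The shell therefore contributes $\lesssim 2^{-kq'}$, which is summable.
\end{itemize}
Your crude bound $G_\Omega/\Phi_1\lesssim|x-y|^{1-n}$ discards exactly the factor $d(x)/|x-y|$ and creates the spurious logarithm. Once this lemma is proved, your argument does give \eqref{eq:main-smoothing} for $q(p-1)<r\le pq$, which includes the natural case $r=p$.

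\emph{Second gap: large $r$.} The range $r>pq=\frac{p(n+1)}{2}$ is allowed by \eqref{eq:initial_datum_range_2}, but there $p-r/q<0$, so $u^p\le\|u\|_\infty^{p-r/q}u^{r/q}$ is false.
\begin{itemize}
\item The best one-step H\"older bound is then $\|u(t)\|_\infty\lesssim t^{-1}(\int u_0^r\Phi_1)^{p/r}$. This is the same scale-invariant family with $q$ replaced by $r/p$, and its rate $t^{-1}$ is worse than $t^{-q/(r-q(p-1))}$ as $t\to0$.
\item Lowering $r$ to $pq$ by H\"older on $\Phi_1\,\ud x$ loses in the same way.
\item For $n=1$ the same problem occurs whenever the chosen $q$ satisfies $q<r/p$.
\end{itemize}
Here you need a genuinely iterative argument, as in the paper's Moser scheme with $r_1=r$.

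\emph{Minor sign slip in Step 2.} For $\partial_t u^p=\Delta u$ the B\'enilan--Crandall estimate is $u_t\le\frac{u}{(p-1)t}$, i.e.\ $t^{-1/(p-1)}u$ is nonincreasing. That is what yields your pointwise bound; it is Lemma~\ref{lem:almost-representation} with $t_0=t/2$. The monotonicity you wrote only gives a lower bound for $u(t)$.
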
 

When $n\geq2$, the Brezis--Turner threshold in the natural case
$r=p$ is sharp; see Section \ref{sec:Weighted_Smoothing_Effects} for
the counterexample.  The critical exponent $\frac{n+1}{n-1}$
originally appeared in the work of Brezis--Turner \cite{BT77},
who studied the regularity of very weak positive solutions to the semilinear elliptic problem:
\begin{equation}\label{eq:elliptic-equation}
\begin{cases}
-\Delta S = S^p & \text{in } \Omega, \\
S = 0 & \text{on } \partial\Omega.
\end{cases}
\end{equation}
This threshold, known as the \emph{Brezis--Turner exponent}, was shown to be sharp in the elliptic setting by Souplet \cite{S05} and Del Pino-Musso-Pacard \cite{DMP07} 
(see also Quittner-Souplet \cite{QP04} and Bidaut-V\'eron--Ponce--V\'eron \cite{BPV} for related results). 
Our main theorem reveals another subtle connection between the fast diffusion and elliptic theories, as the same critical exponent emerges naturally in the weighted parabolic setting.

As an application of Theorem \ref{thm:weighted-smoothing_1}, we improve the global Harnack inequalities; 
see Theorem \ref{thm:bhi_gengral} and Theorem \ref{thm:bhi_special} below.  Once the $L^\infty$ bound is established, it follows from the optimal regularity results in \cite{JX23,JX25} that
\begin{equation}\label{eq:reg-jx}
\partial_t^l u \in 
\begin{cases}
C^{2+p}(\overline{\Omega} \times (0,T^*)) & \text{if } p \text{ is not an integer}, \\
C^{\infty}(\overline{\Omega} \times (0,T^*)) & \text{if } p \text{ is an integer},
\end{cases}
\quad \forall\, l \geq 0.
\end{equation}
The $C^2$ regularity up to the boundary was conjectured by Berryman-Holland \cite{BH80}. 
Consequently, the limit solutions are in fact classical solutions.

Next, we address the asymptotic behavior of ancient solutions to the fast diffusion equation with zero Dirichlet boundary conditions:
\begingroup
\renewcommand{\theHequation}{ADP}
\begin{equation}\tag{\bf{ADP}}\label{eq:ADP}
\begin{cases}
\partial_t u^p = \Delta u & \text{in } \Omega \times (-\infty, 0),\\
u = 0 & \text{on } \partial \Omega \times (-\infty, 0).
\end{cases}
\end{equation}
\endgroup
Here, an \emph{ancient solution} is a solution $u(x,t)$ defined for all $t \in (-\infty, 0)$ that vanishes at the extinction time $T^*=0$.
Based on the aforementioned  global regularity, we assume throughout that $u$ is a classical solution.
We introduce the following transformation:
\begin{equation}\label{eq:v_transform}
v(x,s) := \Big[ \frac{p}{-(p-1)t} \Big]^{\frac{1}{p-1}} u(x,t), \qquad
s := \frac{p}{p-1} \ln \Big( \frac{1}{-t} \Big), \quad t < 0.
\end{equation}
Under this transformation, the equation \eqref{eq:ADP} becomes
\begin{equation}\label{eq:Equation_of_v}
\begin{cases}
\partial_s v^p = \Delta v + v^p & \text{in } \Omega \times \mathbb{R}, \\
v(x,s) = 0 & \text{on } \partial \Omega \times \mathbb{R}.
\end{cases}
\end{equation}
Let $S$ be a positive solution to \eqref{eq:elliptic-equation}, and define the linearized operator $\mathcal{L}_S$ by
\begin{equation*}
\mathcal{L}_{S} := -\Delta - p S^{p-1}.
\end{equation*}
Following Remark 1.11 in Adams--Simon \cite{AS88}, we say that $S$ is \emph{integrable} if for every nonzero $\phi \in \mathrm{Ker}\,\mathcal{L}_S:=\{\phi \in H^1_0(\Omega): \mathcal{L}_S \phi = 0\}$,
there exists a family $\{S_{\tau}\}_{\tau \in (-1, 1)}$ of solutions to \eqref{eq:elliptic-equation} such that $S_{\tau} \rightarrow S$ in $C^2(\overline{\Omega})$ and $(S_{\tau}-S)/\tau \rightarrow \phi$ in $L^2(\Omega)$ as $\tau \to 0$.
We call $S$ \emph{nondegenerate} if $\mathrm{Ker}\,\mathcal L_S =\{0\}$. Every nondegenerate solution is integrable.
The Morse index of \(S\) is defined as the Morse index of the linearized operator \(\mathcal L_S\), namely, the maximal dimension of a subspace of
$H_0^1(\Omega)$ on which the quadratic form
\[
\mathcal Q_S(\phi):=\int_\Omega
\bigl(|\nabla\phi|^2-pS^{p-1}\phi^2\bigr)\,\ud x
\]
is negative definite.
Now we shall state our second main result, which describes the asymptotic behavior of ancient solutions to \eqref{eq:ADP} as $t \to -\infty$.
\begin{thm}\label{thm:asymptotic_behaviour}
Let $n \geq 1$, $p \in (1, \infty)$, and let $\Omega \subset \mathbb{R}^n$ be a smooth bounded domain.
Suppose that $u \in C^{2}(\overline{\Omega} \times (-\infty, 0))$ is a positive ancient solution to \eqref{eq:ADP} satisfying one of the following conditions:
\begin{itemize}
\item[(i)] $1 < p < \frac{n+1}{(n-1)_{+}}$.
\item[(ii)] There exists $r$ satisfying \eqref{eq:initial_datum_range_1} such that
\[
\limsup_{t \to -\infty}\,(-t)^{-\frac{1}{p-1}} \|u(\cdot,t)\|_{L^{r}(\Omega)} < \infty.
\]
\item[(iii)] There exists $r$ satisfying \eqref{eq:initial_datum_range_2} such that
\[
\limsup_{t \to -\infty}\,(-t)^{-\frac{1}{p-1}} \|u(\cdot,t)\|_{L^{r}_{\Phi_1}(\Omega)} < \infty.
\]
\end{itemize}
{Then there exist a positive solution $v_{-\infty}$ to the elliptic problem \eqref{eq:elliptic-equation} and positive constants $C$ and $\gamma$ such that one of the following alternatives holds:}
\begin{equation}\label{eq:poly_decay}
C^{-1}(-s)^{-1} \leq \Big\| \frac{v(\cdot, s)}{ v_{-\infty}}-1\Big\|_{C^2(\overline{\Omega})} \leq C (-s)^{-\gamma}, \quad \forall s < -1
\end{equation}
or
\begin{equation}\label{eq:exp_decay}
\Big\| \frac{v(\cdot, s)}{ v_{-\infty}}-1\Big\|_{C^2(\overline{\Omega})} \leq C e^{\gamma s}, \quad \forall s < -1
\end{equation}
{Here, $\gamma$ depends only on $n$, $p$, $\Omega$, and $v_{-\infty}$, whereas $C$ is allowed to depend additionally on the ancient solution $u$.}
If $v_{-\infty}$ is integrable, then \eqref{eq:exp_decay} must hold.
\end{thm}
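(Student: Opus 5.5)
The argument has two stages. First, each of (i)–(iii) is shown to give uniform bounds on the rescaled flow $v(\cdot,s)$ as $s\to-\infty$. Second, Łojasiewicz–Simon theory is applied to the gradient-like flow \eqref{eq:Equation_of_v}, in the spirit of the stability analysis of Bonforte–Figalli and Choi–McCann–Seis (the latter, as an earlier reference, is not among the results stated above).

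\textbf{Step 1: uniform bounds.} Fix $t_0<0$ and regard $u(\cdot,t_0+\cdot)$ as a limit solution started at time $t_0$.

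Under (iii), apply Theorem~\ref{thm:weighted-smoothing_1} from $t_0$ to $t_0/2$. The elapsed time is $|t_0|/2$ and $\|u(t_0)\|_{L^r_{\Phi_1}}\lesssim |t_0|^{1/(p-1)}$. The exponents in \eqref{eq:main-smoothing} then give
\[
\|u(\cdot,t_0/2)\|_{L^\infty}\lesssim |t_0|^{-\frac{q}{r-q(p-1)}+\frac{r}{(p-1)(r-q(p-1))}}=|t_0|^{\frac1{p-1}} .
\]
This is exactly $\sup_{s<-1}\|v(\cdot,s)\|_{L^\infty}<\infty$. Case (ii) is identical, using \eqref{eq:global_smoothing_effects} in place of Theorem~\ref{thm:weighted-smoothing_1}.

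Case (i) has no quantitative hypothesis. The idea is to derive (iii) with $r=p$ from the finite extinction time. Test the equation with $\Phi_1$:
\[
\frac{d}{dt}\int u^p\Phi_1=-\lambda_1\int u\Phi_1\ \ge\ -\lambda_1 C\Big(\int u^p\Phi_1\Big)^{1/p}
\]
by Hölder. Integrating backward from $T^*=0$ gives $\int u^p\Phi_1(t)\lesssim(-t)^{p/(p-1)}$. Since $r=p$ is admissible in \eqref{eq:initial_datum_range_2} when $p<\frac{n+1}{(n-1)_+}$, this places us in case (iii).

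\textbf{Step 2: lower bounds and regularity.} Next I need a lower bound $v\ge c\,\Phi_1$ for $s<-1$. Upper bounds alone do not exclude $v\to0$; however, the energy $E(v)=\int\frac12|\nabla v|^2-\frac1{p+1}v^{p+1}$ is monotone along the flow, and extinction exactly at $t=0$ (that is, $s\to+\infty$) prevents degeneration. Concretely, I will use the global Harnack principle for fast diffusion: $u(x,t)\ge c\,\Phi_1(x)(T^*-t)^{1/(p-1)}$, with $c$ depending on the $L^\infty$ bound. This gives $c\Phi_1\le v\le C\Phi_1$.

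The regularity theory \eqref{eq:reg-jx} of \cite{JX23,JX25}, applied to $v$ on unit time intervals, then yields uniform $C^{2+\alpha}$ bounds for $v/\Phi_1$. Hence the orbit $\{v(\cdot,s)\}_{s<-1}$ is precompact in $C^2$, and the weighted quotient is controlled as well.

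\textbf{Step 3: convergence and rates.} The identity
\[
\frac{d}{ds}E(v)=-p\int v^{p-1}(\partial_s v)^2\le0
\]
shows that $E$ is bounded and monotone along the flow. Consequently $\int_{-\infty}^{-1}\int v^{p-1}|\partial_sv|^2\,dx\,ds<\infty$, and the $\alpha$-limit set as $s\to-\infty$ consists of positive solutions of \eqref{eq:elliptic-equation}.

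Uniqueness of the limit and the rates come from a Łojasiewicz–Simon inequality near a limit point $v_{-\infty}$:
\[
|E(w)-E(v_{-\infty})|^{1-\theta}\le C\|\Delta w+w^p\|_{L^2_{w^{1-p}}} .
\]
This has to hold in a weighted norm adapted to the degenerate coefficient $v^{p-1}\sim\Phi_1^{p-1}$, proved via Simon's Lyapunov–Schmidt reduction in the weighted spaces $v_{-\infty}^{p-1}$-$L^2$. Running the standard argument backward in time gives \eqref{eq:poly_decay}: $\theta<\frac12$ yields the upper rate $(-s)^{-\gamma}$. When $\theta=\frac12$, which is the case if $v_{-\infty}$ is integrable by Adams–Simon, the same argument gives \eqref{eq:exp_decay}.

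The lower bound $(-s)^{-1}$ in \eqref{eq:poly_decay} comes from a dichotomy. Project the error onto $\mathrm{Ker}\,\mathcal L_S$ and analyze the resulting ODE, whose nonlinearity is at least quadratic. Either the error decays exponentially, or its decay is no faster than $1/|s|$, since $\dot y\gtrsim -y^2$ forces $y\gtrsim 1/|s|$. This is the argument of Choi–McCann–Seis.

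\textbf{Main obstacle.} I expect the main difficulty to be the weighted Łojasiewicz inequality together with the lower bound $v\ge c\Phi_1$. The gradient flow degenerates at $\partial\Omega$, so the spectral theory has to be done for $\mathcal L_S$ with weight $S^{p-1}$, which has compact resolvent by Hardy's inequality. My first approach would be to write $v=v_{-\infty}(1+w)$ and work with $w$ in $C^{2}(\overline\Omega)$, where the equation for $w$ becomes uniformly parabolic after division by $v_{-\infty}^{p-1}$.
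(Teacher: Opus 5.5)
Your outline follows the paper's route. Your Step 1 is exactly Lemmas \ref{lm:norm_infty_upper_bound} and \ref{lem:Lpplus1-growth}, including the reduction of (i) to (iii) through $\int u^p\Phi_1\lesssim(-t)^{p/(p-1)}$. The paper then proceeds, as you do, through $v\asymp\Phi_1$, uniform regularity, the monotone energy, a {\L}ojasiewicz--Simon inequality, Adams--Simon in the integrable case, and a Choi--McCann--Seis type spectral argument for the lower bound.

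\textbf{The gap: the $(-s)^{-1}$ lower bound.} That last step fails as written. As $s\to-\infty$, the inequality $\dot y\gtrsim -y^2$ gives nothing. It means $\frac{\ud}{\ud s}(1/y)\le C$, which only bounds $1/y(s)$ from below, and $y=e^{s}$ satisfies it. Backward in time you need $\dot y\le Cy^2$, which gives $1/y(s)\le 1/y(s_0)+C(s_0-s)$.

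For $y=\|g_0\|$ this side is available, since $|\frac{\ud}{\ud s}\|g_0\||\le p^{-1}\|\mathcal N(g)\|$. But that helps only once you know $\|\mathcal N(g)\|\lesssim\|g_0\|^2$, and projecting onto $\mathrm{Ker}\,\mathcal L_S$ alone does not give it. Two further ingredients are needed.

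First, the Merle--Zaag dichotomy must be run backward, where the roles of the modes are reversed relative to the forward-time analysis. $\mathscr X_+$ grows as $s\to-\infty$ and cannot dominate. So either $\mathscr X_0$ dominates, which gives the $(-s)^{-1}$ lower bound, or $\mathscr X_-$ dominates, which gives the exponential alternative. Note that $\mathscr X_-$ always contains the extinction-time mode $\mathbf 1$, with eigenvalue $1-p$. Compare \eqref{eq:caseI}--\eqref{eq:caseII}.

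Second, in each case you need the reverse-doubling bound $\|g(\cdot,s-1)\|\lesssim\|g(\cdot,s)\|$, obtained from the dominance by Gronwall. This is needed because $\mathcal N$ contains $\partial_sg$, and the forward smoothing \eqref{eq:relative-Linfty-smoothing} controls $\|g(s)\|_{L^\infty}+\|\partial_sg(s)\|_{L^\infty}$ only by $\|g(s-1)\|$.

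Finally, the lower bound holds at first only for $s\le s_0$. Extending it to all $s<-1$ uses the energy identity, which rules out the trajectory meeting $v_{-\infty}$ at a finite time.

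\textbf{Two smaller points.} First, the sharp lower bound $u\gtrsim\Phi_1(-t)^{1/(p-1)}$ is not an off-the-shelf global Harnack principle for Sobolev-supercritical $p$. For general data, Theorem \ref{thm:bhi_gengral} gives only the rate $(T^*-t)^{(r+1)/(p-1)}$. The paper proves the sharp bound directly in Theorem \ref{thm:boundary-behavior}, and you should do the same. Start from
$\frac{\ud}{\ud t}\int u^p\Phi_1=-\lambda_1\int u\Phi_1\le-\lambda_1\|u\|_{L^{r+1}}^{r+1}\big/\big(\|u\|_{L^\infty}^{r-1}\|u/\Phi_1\|_{L^\infty}\big)$
and combine it with three bounds:
\begin{itemize}
\item the lower bound $\|u(\cdot,t)\|_{L^{r+1}}\gtrsim(-t)^{1/(p-1)}$, forced by extinction at $t=0$ and Sobolev (Lemma \ref{lm:norm_bounds_basic}); this, not the energy, is what prevents degeneration;
\item the upper bound $u/\Phi_1\lesssim(-t)^{1/(p-1)}$ from Lemma \ref{lem:almost-representation};
\item the Green-function lower bound of Lemma \ref{lm:global-lower-bounds-I}.
\end{itemize}

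Second, the weighted {\L}ojasiewicz inequality you single out as the main obstacle is unnecessary. Since $v\le C$ gives $v^{1-p}\ge c$, the dissipation $p^{-1}\int v^{1-p}|\Delta v+v^p|^2$ dominates $c\|\Delta v+v^p\|_{L^2}^2$. So the unweighted Feireisl--Simondon inequality \eqref{eq:LS} already suffices; it is posed on $C^2$-neighbourhoods within $\mathscr D$ and is valid for noninteger $p$.
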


{
\begin{rem}
The dependence of \(C\) on \(u\) reflects the residual scaling
symmetry.  Indeed, \(u_A(x,t):=A u(x,A^{1-p}t)\) has the same extinction
time, while its rescaled trajectory satisfies
\(v_A(x,s)=v(x,s+p\log A)\).  Thus, fixing the extinction time does not
fix the rescaled time phase, and \(C\) cannot in general be chosen
uniformly over the resulting unnormalized class.
\end{rem}
}

It also follows from the preceding asymptotic dichotomy that, under suitable additional conditions, convergence to a stationary backward profile forces the entire rescaled ancient trajectory to be stationary.
\begin{thm}\label{thm:ancient-rigidity}
Let $u$ satisfy the assumptions of Theorem \ref{thm:asymptotic_behaviour}, and let $v_{-\infty}$ be its backward limit.  Suppose in addition that one of the following conditions holds:
\begin{itemize}
\item[(i)] $v_{-\infty}$ is integrable and has Morse index one;
\item[(ii)] $p \in \big(1, \frac{n+2}{(n-2)_+}\big)$ and all positive solutions to \eqref{eq:elliptic-equation} have the same energy (see \eqref{def:energy-functional}).
\end{itemize}
Then $u$ is a separable solution of the form
\begin{equation*}
u(x, t)=\Big[\frac{-(p-1) t}{p}\Big]^{\frac{1}{p-1}} v_{-\infty}(x).
\end{equation*}
\end{thm}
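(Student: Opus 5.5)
Theorem \ref{thm:ancient-rigidity} asserts that $v(\cdot,s)\equiv v_{-\infty}$ for all $s\in\mathbb{R}$. My plan is to combine the backward convergence of Theorem \ref{thm:asymptotic_behaviour} with the forward behaviour of the rescaled flow \eqref{eq:Equation_of_v} and the Lyapunov structure of that flow. The energy is
\[
E(v)=\int_\Omega \Bigl(\tfrac12|\nabla v|^2-\tfrac{1}{p+1}v^{p+1}\Bigr)\,\ud x .
\]
Multiplying \eqref{eq:Equation_of_v} by $\partial_s v$ gives
\[
\frac{\ud}{\ud s}E(v(\cdot,s))=-p\int_\Omega v^{p-1}(\partial_s v)^2\,\ud x\le 0 .
\]
Since $u$ is positive and extinguishes at $t=0$, the rescaled $v$ is defined for all $s\in\mathbb R$. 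As $s\to+\infty$, the classical result for the Cauchy--Dirichlet problem (Berryman--Holland, Feireisl--Simondon, Bonforte--Grillo--V\'azquez) should give $v(\cdot,s)\to v_{+\infty}$, another positive solution of \eqref{eq:elliptic-equation}. Uniform positivity of $v/\Phi_1$ here comes from the global Harnack inequality.

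\textbf{Case (ii).} Monotonicity gives $E(v_{-\infty})\ge E(v(\cdot,s))\ge E(v_{+\infty})$. Since all positive steady states have the same energy, $E$ is constant along the trajectory. Hence
\[
\int_{\mathbb R}\int_\Omega v^{p-1}(\partial_s v)^2\,\ud x\,\ud s=0,
\]
so $\partial_s v\equiv0$ and $v\equiv v_{-\infty}$. The subcritical range $p<\frac{n+2}{(n-2)_+}$ is used so that $E$ is well defined and continuous along the $C^2$ convergence. It also guarantees the forward convergence, via compactness of bounded orbits.

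\textbf{Case (i).} I argue by contradiction, supposing $v\not\equiv v_{-\infty}$. Integrability forces the exponential alternative \eqref{eq:exp_decay}, so $w:=v/v_{-\infty}-1$ decays like $e^{\gamma s}$ as $s\to-\infty$. Write $v=v_{-\infty}(1+w)$ and linearize; the leading equation is
\[
p\,v_{-\infty}^{p-1}\partial_s(v_{-\infty}w)=-\mathcal L_{v_{-\infty}}(v_{-\infty}w)+O(w^2).
\]
This is a weighted eigenvalue problem $\mathcal L\phi=-\mu\, p\,v_{-\infty}^{p-1}\phi$. Exponential decay backward in time means $v_{-\infty}w$ is asymptotically dominated by the unstable modes, those with $\mu>0$.

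By the Morse index one hypothesis, there is exactly one unstable weighted eigenfunction $\psi_1$, and it can be taken positive. Kernel directions cannot carry the leading term: integrability means they are tangent to a manifold of equilibria, and an Adams--Simon/Merle--Zaag-type argument excludes a pure kernel leading term with exponential rate. Hence
\[
v-v_{-\infty}=a\,e^{\mu_1 s}\psi_1+o(e^{\mu_1 s}),\qquad a\ne0,
\]
where the nonvanishing of $a$ requires ruling out super-exponential decay. I would do this by a backward uniqueness or unique-continuation argument for the linear parabolic equation, giving $v\equiv v_{-\infty}$ in that case.

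So $v$ lies, for very negative $s$, either strictly above or strictly below $v_{-\infty}$ (using $\psi_1>0$ and Hopf at the boundary). By comparison and the scaling structure, $v(\cdot,s)$ is then monotone in $s$ for all time. Above $v_{-\infty}$, the solution blows up in finite $s$, since for the original equation extinction occurs before $t=0$. Below, it extinguishes in finite $s$, meaning extinction time is earlier than $0$. Either contradicts extinction exactly at $T^*=0$, matching Theorem \ref{thm:asymptotic_behaviour}.

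The main obstacle is this last step: turning the one-dimensional unstable manifold into a sign condition, and proving the leading coefficient $a$ is nonzero. To get the sign, I would compare $u$ with the separable solutions $(1\pm\varepsilon)^{?}$-shifted in time. These are exactly $u_A$ for $A$ near $1$, which correspond to time shifts $v(\cdot,s+c)$ of the equilibrium trajectory.
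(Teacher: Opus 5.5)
Your Case (ii) matches the paper's argument. In Case (i) your skeleton also matches. Integrability gives the exponential alternative, and that excludes the center-dominant Merle--Zaag alternative (which decays no faster than $(-s)^{-1}$). So the unstable component dominates, and the contradiction must come from comparison with separable solutions.

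The gap is the closing step, which fails as written.
\begin{itemize}
\item Lying above $v_{-\infty}$ does not make $v$ monotone in $s$: comparison with a steady state only propagates $v\ge v_{-\infty}$.
\item By itself it yields no contradiction. It only says $u\ge U_0:=[\frac{p-1}{p}(-t)]^{1/(p-1)}v_{-\infty}$, which is compatible with extinction at $t=0$.
\item Your directions are reversed. A definite relative margin above $v_{-\infty}$ forces extinction \emph{after} $t=0$. Also, $v$ cannot blow up at a finite $s$, since every $t<0$ corresponds to a finite $s$.
\item Your instinct to compare with separable solutions shifted in time is right, but the family you identify is trivial. $u_A$ applied to $U_0$ returns $U_0$, and $s$-translates of the constant trajectory $v\equiv v_{-\infty}$ are the same trajectory.
\end{itemize}
The family that works is $U_T(x,t)=[\frac{p-1}{p}(T-t)]^{1/(p-1)}v_{-\infty}(x)$ with $T\neq0$. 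Its rescaling is $(1+Te^{\kappa s})^{1/(p-1)}v_{-\infty}$ with $\kappa=\frac{p-1}{p}$, and $e^{\kappa s}=(-t)^{-1}$. This is exactly the orbit leaving $v_{-\infty}$ along the unstable direction. Indeed, $\mathcal L_SS=(1-p)S^{p-1}S$ together with Morse index one forces the unstable weighted eigenfunction to be $S=v_{-\infty}$ itself, i.e. $\mathscr X_-=\operatorname{span}\{\mathbf 1\}$ in relative variables.

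What closes the argument is a single time $t_0<0$, with $s_0=s(t_0)$, at which $g:=v/v_{-\infty}-1$ has a uniform sign: $g(\cdot,s_0)\ge\delta$ or $g(\cdot,s_0)\le-\delta$ on $\Omega$ for some $\delta\in(0,1)$. The paper gets this from $g=C_*e^{\kappa s}+o(e^{\kappa s})$ in the relative $C^2$ topology, with $C_*\neq0$.
\begin{itemize}
\item If $g(\cdot,s_0)\ge\delta$, take $T=((1+\delta)^{p-1}-1)(-t_0)>0$. Then $U_T(\cdot,t_0)=(1+\delta)U_0(\cdot,t_0)\le u(\cdot,t_0)$, and comparison gives $0<U_T(\cdot,0)\le u(\cdot,0)=0$.
\item If $g(\cdot,s_0)\le-\delta$, take $T=((1-\delta)^{p-1}-1)(-t_0)\in(t_0,0)$. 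Then $u(\cdot,t_0)\le U_T(\cdot,t_0)$, and comparison gives $u(\cdot,T)\le U_T(\cdot,T)=0$, contradicting positivity.
\end{itemize}
The paper does the same, choosing $T$ strictly between $0$ and $(p-1)C_*$.

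Finally, you do not need backward uniqueness to see that the leading coefficient is nonzero. Write $g_-=a(s)\mathbf 1$. If $a(s_*)=0$, then $g(\cdot,s_*)=0$ by unstable dominance. Forward uniqueness then gives $v=v_{-\infty}$ for $s\ge s_*$. The energy identity together with $F[v(\cdot,s)]\to F[v_{-\infty}]$ as $s\to-\infty$ gives the same for $s\le s_*$. Otherwise $a$ keeps a fixed sign, and $(\log|a|)'=\kappa+O(|a|)$ has integrable error. Hence $a\sim C_*e^{\kappa s}$ with $C_*\neq0$, which already rules out super-exponential decay.
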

In particular, condition \textup{(i)} applies whenever $v_{-\infty}$ is nondegenerate and has Morse index one. The Morse-index-one assumption in condition \textup{(i)} has a natural
dynamical interpretation. Every positive stationary profile \(S\) has Morse
index at least one, since
\[
\mathcal Q_S(S)
=(1-p)\int_\Omega S^{p+1}\,\ud x<0.
\]
In the rescaled equation, this direction is the constant relative mode induced
by translating the extinction time. Thus, Morse index one means that the extinction-time translation mode is the only linearly unstable direction. In this sense, condition \textup{(i)} expresses linear stability modulo extinction-time translations.
For $p \in \big(1, \frac{n+2}{(n-2)_+}\big)$, the asymptotic behavior of solutions to \eqref{eq:Cauchy-Dirichlet} near the extinction time is well understood; see, among others, Berryman-Holland \cite{BH80}, Feireisl-Simondon \cite{FS00}, Bonforte-Grillo-V\'azquez \cite{BGV12}, Bonforte-Figalli \cite{BF21}, and Akagi \cite{A23}, as well as the results in \cite{CMS23, JX23, JX23a}. Some of these results remain valid for solutions that change sign; see Akagi \cite{A23} and Akagi-Maekawa \cite{A26}.  Our asymptotic analysis for ancient solutions is in the same spirit.

By the classical result of Pohozaev \cite{P65}, there exists no nontrivial solution to \eqref{eq:elliptic-equation} when $\Omega$ is star-shaped and $p \geq \frac{n+2}{(n-2)_+}$. 
Consequently, in this regime, the only separable solution is the trivial one.
On the other hand, when $n\geq3$, Bahri-Coron \cite{BC88}
proved the existence of positive solutions to
\eqref{eq:elliptic-equation} when $p=\frac{n+2}{n-2}$ and $\Omega$
has nontrivial topology.  For
$\frac{n+1}{n-1}<p<\frac{n+2}{n-2}$, we expect the existence of
self-similar solutions in the half-space
$\Omega=\mathbb{R}^n_+$.
Moreover, in bounded domains, the rescaled solution $(-t)^{-\frac{1}{p-1}} u(x,t)$ may exhibit self-similar type blow-up as $t \to -\infty$. The Sobolev critical case $p=\frac{n+2}{n-2}$ is expected to exhibit richer 
phenomena; see Daskalopoulos-del Pino-Sesum \cite{DDS} for the problem in the whole space and the study of extinction profiles in \cite{JX26}.

Finally, we note that  weighted spaces $L^q_{\Phi_1}(\Omega)$ have played crucial roles in the theory of nonlinear elliptic and parabolic Dirichlet problems.
Their significance for semilinear problems was highlighted by Brezis-Cazenave-Martel-Ramiandrisoa \cite{BCMR96}, 
who established a link between the existence of global classical solutions to parabolic equations and the existence of very weak solutions to their stationary counterparts.
These spaces are instrumental in studying complete or instantaneous blow-up phenomena; see Brezis-Cabr\'e \cite{BC98}, Cabr\'e-Martel \cite{CM98, CM99}, and Martel \cite{M99}.
For smoothing properties in weighted spaces for semilinear parabolic or elliptic  equations, see Fila-Souplet-Weissler \cite{FSW01} and Quittner-Souplet \cite{QP04}. Their proofs were based on a novel bootstrap argument, relying on regularity theory for linear elliptic and parabolic equations.
Our proof of Theorem \ref{thm:weighted-smoothing_1} proceeds differently. 
Specifically, we establish a weighted Caccioppoli-type inequality 
(Lemma \ref{lem:energy_estimate}) and implement a modified Moser iteration. 

The paper is organized as follows. 
Section \ref{sec:Preliminaries} introduces preliminary tools, including weighted Sobolev inequalities. 
In Section \ref{sec:Weighted_Smoothing_Effects}, we prove Theorem \ref{thm:weighted-smoothing_1} using a variant of the Moser iteration method. 
Section \ref{sec:Harnack} is devoted to establishing global Harnack inequalities via an almost representation formula and our sharp smoothing estimates. 
Finally, Section \ref{sec:Ancient} derives basic a priori estimates for ancient solutions, establishes their asymptotic behavior as $t \to -\infty$ in Theorem \ref{thm:asymptotic_behaviour}, and proves the rigidity criteria in Theorem \ref{thm:ancient-rigidity}.

\section{Preliminaries}\label{sec:Preliminaries}

As mentioned previously, $\Phi_1$ denotes the positive eigenfunction corresponding to the first Dirichlet eigenvalue $\lambda_1>0$, normalized by $\|\Phi_1\|_{L^2(\Omega)}=1$, and satisfies
\begin{equation}\label{eq:first-eigenfunction}
\begin{cases}
-\Delta \Phi_1=\lambda_1 \Phi_1 & \text{in }\Omega,\\
\Phi_1=0 & \text{on }\partial \Omega.
\end{cases}
\end{equation}
By the Hopf boundary lemma and the regularity theory of elliptic equations, there exists a constant $C\ge 1$ such that 
\begin{equation}\label{eq:hopf-estimate}
\frac{1}{C}\, d(x) \le \Phi_1(x) \le C\, d(x), \quad \forall x\in\Omega,
\end{equation}
where $d(x):=\text{dist}(x,\partial \Omega)$.
Let $W^{1,2}_{\Phi_1}(\Omega)$ be the weighted Sobolev space consisting of functions $f\in W^{1,2}_{\text{loc}}(\Omega)$ with finite norm
\begin{align}\label{eq:weighted-norm}
\|f\|_{W^{1,2}_{\Phi_1}(\Omega)} := \left( \int_{\Omega} \big(|f|^{2}+|\nabla f|^{2}\big)\, \Phi_1 \,\mathrm{d}x \right)^{1/2}.
\end{align}

Throughout this paper, the exponent $q^*$ is defined by $q^{*} = \frac{2 n+ 2}{n-1}$ for $n\geq2$.  When $n=1$, we fix any $q^*>2$ and define $q>1$ by $q^{-1}+2(q^*)^{-1}=1$.
We recall the following weighted Sobolev inequality.

{
\begin{lem}\label{lem:weighted-Poincare}
Let $n \geq 1$, and let $\Omega\subset \R^n$ be a smooth bounded domain.
Then there exists a constant $C_1=C_1(n,\Omega,q^*)>0$ such that, for every
$f\in W^{1,2}_{\Phi_1}(\Omega)$,
\begin{equation}\label{eq:weighted-Sobolev}
\Big(\int_{\Omega} |f|^{q^{*}} \Phi_1 \,\ud x \Big)^{{2}/{q^{*}}}
\le C_1 \int_{\Omega}\bigl(|\nabla f|^{2}+|f|^{2}\bigr)\Phi_1\,\ud x .
\end{equation}
\end{lem}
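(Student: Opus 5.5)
The inequality is a Sobolev embedding in which the weight $\Phi_1\sim d(x)$ behaves like one extra dimension. The exponent $q^*=\frac{2(n+1)}{n-1}$ is exactly the Sobolev exponent $\frac{2N}{N-2}$ in dimension $N=n+1$. The plan is to localize with a partition of unity, which reduces to a model half-ball where $\Phi_1$ is comparable to $x_n$, and there to realize $\int |g|^s x_n\,\ud x$ as an unweighted integral in $\R^{n+1}$ via a rotation trick.

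\textbf{Step 1 (localization).} By \eqref{eq:hopf-estimate} we may replace $\Phi_1$ by $d$ throughout. Cover $\overline\Omega$ by finitely many interior balls $B$ with $\overline{B}\subset\Omega$, together with boundary patches in which, after flattening $\partial\Omega$ by a smooth diffeomorphism, the domain becomes $Q^+=\{y'\in B'_1,\ 0<y_n<1\}$ and $d$ becomes comparable to $y_n$. Take a subordinate partition of unity $\{\eta_k\}$. Then $|f|\le\sum_k|\eta_k f|$, and $|\nabla(\eta_k f)|^2\le 2|\nabla f|^2+C|f|^2$. Hence it suffices to prove the inequality for each $g=\eta_k f$.

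On interior balls $d$ is bounded above and below by positive constants, so the estimate follows from the ordinary Sobolev or Morrey embedding in $\R^n$. This needs $q^*\le\frac{2n}{n-2}$, which holds since $\frac{2(n+1)}{n-1}<\frac{2n}{n-2}$. For $n\le 2$ there is no restriction, and any finite $q^*$ works.

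\textbf{Step 2 (the half-space model).} Let $g\in C_c^1$ have support in $\{y'\in B_1',\ 0\le y_n<1\}$; it need not vanish at $y_n=0$, since $f$ is not assumed to vanish on $\partial\Omega$. Define on $\R^{n+1}$ the function
\[
G(y',z_1,z_2):=g\bigl(y',|(z_1,z_2)|\bigr),
\]
which treats $y_n$ as the radius in a 2-plane. By polar coordinates in $(z_1,z_2)$,
\[
\int_{\R^{n+1}}|G|^s\,\ud y'\,\ud z=2\pi\int |g|^s\,y_n\,\ud y .
\]
Moreover $|\nabla G|=|\nabla g|$, and $G$ is Lipschitz with compact support. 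The standard Sobolev inequality in $\R^{n+1}$ then gives
\[
\Bigl(\int |g|^{q^*}y_n\,\ud y\Bigr)^{2/q^*}\le C\int|\nabla g|^2\,y_n\,\ud y
\]
for $n\ge2$. For $n=1$ we are in $\R^2$, and the Sobolev inequality for $W^{1,2}$ on bounded sets gives any finite $q^*$, using also the term $\int |g|^2 y_n$. Undoing the flattening changes constants only, because the Jacobian is bounded above and below and $d\simeq y_n$.

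\textbf{Step 3 (density).} The general case $f\in W^{1,2}_{\Phi_1}(\Omega)$ follows by approximating $\eta_kf$ in the weighted norm by smooth functions on the patch, mollifying after a small inward shift in $y_n$, and then applying Fatou's lemma.

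The main obstacle is this density step near $\partial\Omega$. Functions in $W^{1,2}_{\mathrm{loc}}$ with finite weighted norm may have no trace, so approximation must be done carefully. Shifting $g_\varepsilon(y)=g(y',y_n+\varepsilon)$ and then mollifying works, because $y_n\le y_n+\varepsilon$ controls the weighted norms and dominated convergence applies. Alternatively, one can work directly with $G$, which lies in $W^{1,2}_{\mathrm{loc}}$ of $\R^{n+1}$ minus a codimension-2 set. Such a set has zero capacity, so $G\in W^{1,2}(\R^{n+1})$ and Sobolev applies directly.
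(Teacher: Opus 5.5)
Your proof is correct, but it takes a different route from the paper.

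\textbf{The paper's route.} The paper works globally and needs no localization. It sets $\widetilde\Omega=\{(x,x_{n+1}):x\in\Omega,\ 0<x_{n+1}<\Phi_1(x)\}\subset\mathbb{R}^{n+1}$ and $\widetilde f(x,x_{n+1})=f(x)$. The weight $\Phi_1(x)$ is then just the length of the vertical fibre over $x$, and by Fubini both sides of \eqref{eq:weighted-Sobolev} become unweighted $L^{q^*}$ and $W^{1,2}$ quantities on $\widetilde\Omega$. It then applies the Sobolev embedding on $\widetilde\Omega$. This is legitimate because $\widetilde\Omega$ is a bounded Lipschitz domain: by the Hopf lemma, the faces $\{x_{n+1}=0\}$ and $\{x_{n+1}=\Phi_1(x)\}$ meet transversally along $\partial\Omega\times\{0\}$.

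\textbf{Your route.} You localize, flatten the boundary, and create the extra dimension by making $y_n$ a radius in a $2$-plane. The weight $y_n$ then appears as the polar Jacobian, and you use the Sobolev inequality on all of $\mathbb{R}^{n+1}$.

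\textbf{What each buys.} The paper needs no partition of unity, no separate interior estimate, and no approximation step. Indeed, $\widetilde f$ lies in $W^{1,2}(\widetilde\Omega)$ automatically, with weak gradient $(\nabla f,0)$, and the embedding on a Lipschitz domain needs no traces. The price is checking that $\widetilde\Omega$ is Lipschitz and relying on the extension theorem behind that embedding.

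Your argument uses only the whole-space Sobolev inequality. In exchange, your lifted $G$ is a priori Sobolev only away from the codimension-two set $\{z=0\}$, since $f$ may have no boundary trace. That is exactly why you need Step 3, and your shift-and-mollify argument does close it:
\begin{itemize}
\item After the substitution, the weight on $\{y_n>\varepsilon\}$ becomes $y_n-\varepsilon\le y_n$. So the weighted norm of $g_\varepsilon$ is bounded by that of $g$.
\item $g_\varepsilon$ is in unweighted $W^{1,2}$ on $\{y_n>-\varepsilon/2\}$. So mollifications with radius below $\varepsilon/2$ satisfy the smooth inequality and converge in $W^{1,2}$ up to $\{y_n=0\}$, and Fatou passes to $g_\varepsilon$.
\item The left-hand side for $g_\varepsilon$ is $\int_{\{y_n>\varepsilon\}}|g|^{q^*}(y_n-\varepsilon)\,\ud y$, which increases to $\int|g|^{q^*}y_n\,\ud y$ by monotone convergence as $\varepsilon\downarrow0$.
\end{itemize}
Your alternative via zero capacity of $\{z=0\}$ is also valid, since $G$ has finite $W^{1,2}$ norm off that set.
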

\begin{proof}
Define
\[
\widetilde{\Omega}
:=\{(x,x_{n+1})\in\R^{n+1}:x\in\Omega,\ 0<x_{n+1}<\Phi_1(x)\}
\]
and $\widetilde f(x,x_{n+1}):=f(x)$.  By the boundary regularity of
$\Phi_1$ and the Hopf boundary lemma, $\widetilde\Omega$ is a bounded
Lipschitz domain.  By the classical Sobolev embedding
$W^{1,2}(\widetilde\Omega)\hookrightarrow L^{q^*}(\widetilde\Omega)$
and the Fubini--Tonelli theorem, we have
\begin{align*}
\int_{\Omega}|f|^{q^*}\Phi_1\,\ud x
&=\int_{\widetilde\Omega}|\widetilde f|^{q^*}\,\ud x\,\ud x_{n+1}\\
&\le C\left(\int_{\widetilde\Omega}
\bigl(|\nabla\widetilde f|^2+|\widetilde f|^2\bigr)\,\ud x\,\ud x_{n+1}
\right)^{q^*/2}\\
&=C\left(\int_\Omega
\bigl(|\nabla f|^2+|f|^2\bigr)\Phi_1\,\ud x\right)^{q^*/2}.
\end{align*}
\end{proof}

Consequently, we have the following parabolic weighted Sobolev inequality.
\begin{lem}\label{lem:weighted-Sobolev-variant}
Let $\sigma\in(1,q^*/2)$, and let $q\in(1,\infty)$ be determined by
\[
\frac1q+\frac2{q^*}=1.
\]
Assume that $f(\cdot,\tau)\in W^{1,2}_{\Phi_1}(\Omega)$ for a.e.
$\tau\in(s,t)$.  Then
\begin{align*}
\int_s^t\!\int_\Omega |f|^{2\sigma}\Phi_1\,\ud x\,\ud\tau
&\le C_1\left(\int_s^t\!\int_\Omega
\bigl(|\nabla f|^2+|f|^2\bigr)\Phi_1\,\ud x\,\ud\tau\right)\\
&\quad\times
\left(\mathop{\rm ess\,sup}_{\tau\in(s,t)}
\int_\Omega |f(\cdot,\tau)|^{2(\sigma-1)q}\Phi_1\,\ud x
\right)^{1/q}.
\end{align*}
\end{lem}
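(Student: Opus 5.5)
We prove the inequality pointwise in time by Hölder's inequality with the weight, then apply Lemma \ref{lem:weighted-Poincare}, and finally integrate in $\tau$. Fix $\tau\in(s,t)$ for which $f(\cdot,\tau)\in W^{1,2}_{\Phi_1}(\Omega)$. We split the integrand as
\[
|f|^{2\sigma}=|f|^{2}\cdot|f|^{2(\sigma-1)}
\]
and apply Hölder's inequality with respect to the measure $\Phi_1\,\ud x$. We use the conjugate pair of exponents $q^*/2$ and $q$, which is admissible because $\frac{2}{q^*}+\frac1q=1$. This gives
\[
\int_\Omega |f|^{2\sigma}\Phi_1\,\ud x\le\Big(\int_\Omega |f|^{q^*}\Phi_1\,\ud x\Big)^{2/q^*}\Big(\int_\Omega |f|^{2(\sigma-1)q}\Phi_1\,\ud x\Big)^{1/q}.
\]

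The first factor is exactly the left-hand side of \eqref{eq:weighted-Sobolev}. Lemma \ref{lem:weighted-Poincare} therefore bounds it by
\[
C_1\int_\Omega\big(|\nabla f(\cdot,\tau)|^2+|f(\cdot,\tau)|^2\big)\Phi_1\,\ud x .
\]
We bound the second factor by its essential supremum over $\tau\in(s,t)$, which is a constant independent of $\tau$. Integrating the resulting pointwise-in-time inequality over $(s,t)$ then gives the claim. Measurability in $\tau$ is harmless: all quantities are nonnegative, so Tonelli's theorem applies, and if the essential supremum is infinite there is nothing to prove.

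The hypothesis $\sigma\in(1,q^*/2)$ does not enter the Hölder step, which only needs $\sigma\ge1$ so that the split into $|f|^2\cdot|f|^{2(\sigma-1)}$ is meaningful. Its role is to keep $2(\sigma-1)q$ finite and in the range used later in the Moser iteration.

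For $n=1$ the same argument applies verbatim with the fixed $q^*>2$. Lemma \ref{lem:weighted-Poincare} is stated for that choice, and $q$ is defined by the same relation. The only point that needs care is matching the Hölder exponents to the definition of $q$, which the relation $\frac1q+\frac2{q^*}=1$ guarantees. There is no genuine obstacle beyond this.
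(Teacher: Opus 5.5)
Your proposal is correct and is essentially the paper's proof: Hölder's inequality in $x$ with respect to $\Phi_1\,\ud x$ using the conjugate exponents $q^*/2$ and $q$, then Lemma \ref{lem:weighted-Poincare} on the first factor, then bounding the second factor by its essential supremum and integrating in $\tau$. Your remarks on measurability and the degenerate infinite-supremum case are harmless additions that the paper leaves implicit.
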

\begin{proof}
For a.e. $\tau\in(s,t)$, by H\"older's inequality and
\eqref{eq:weighted-Sobolev}, we have
\begin{align*}
\int_\Omega |f|^{2\sigma}\Phi_1\,\ud x
&\le
\left(\int_\Omega |f|^{q^*}\Phi_1\,\ud x\right)^{2/q^*}
\left(\int_\Omega |f|^{2(\sigma-1)q}\Phi_1\,\ud x\right)^{1/q}\\
&\le C_1
\left(\int_\Omega\bigl(|\nabla f|^2+|f|^2\bigr)\Phi_1\,\ud x\right)
\left(\int_\Omega |f|^{2(\sigma-1)q}\Phi_1\,\ud x\right)^{1/q}.
\end{align*}
Integrating in $\tau$ and taking the essential supremum of the second
factor proves the assertion.
\end{proof}
}
We remark that $q = \frac{n+1}{2}$ when $n \geq 2$, whereas for $n=1$ any $q>1$ can be obtained by taking $q^*=2q/(q-1)$.  In applications below we additionally choose $q<r/(p-1)$.
Finally, we recall the following iteration lemma.
\begin{lem}[De Giorgi]\label{lem:iteration}
Let $Z: [a, d] \to [0,\infty)$ be a bounded nonnegative function. 
Assume that there exist constants $A,B\geq 0$, $\alpha>0$, and $\theta \in [0,1)$ such that for any $a\leq b<c<d$,
\begin{equation*}
Z(b) \leq A (c-b)^{-\alpha} + B + \theta Z(c).
\end{equation*}
Then there exists a constant $C_3=C_3(\alpha,\theta)>0$ such that
\begin{equation*}
Z(a) \leq C_3 (A (d-a)^{-\alpha} + B).
\end{equation*}
\end{lem}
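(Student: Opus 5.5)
This is the classical De Giorgi iteration lemma. The plan is to iterate the hypothesis along a geometric sequence of points accumulating at $d$, and to sum the resulting series.

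\emph{Choice of points.} Fix $\lambda\in(0,1)$, to be chosen depending only on $\alpha$ and $\theta$. Set $t_0:=a$ and
\[
t_{k+1}:=t_k+(1-\lambda)\lambda^k(d-a).
\]
Then $t_k$ increases to $d$, and $t_{k+1}-t_k=(1-\lambda)\lambda^k(d-a)$.

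\emph{One step.} Applying the hypothesis with $b=t_k$ and $c=t_{k+1}$ gives
\[
Z(t_k)\le A(1-\lambda)^{-\alpha}\lambda^{-k\alpha}(d-a)^{-\alpha}+B+\theta Z(t_{k+1}).
\]

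\emph{Induction.} By induction on $k$,
\[
Z(t_0)\le \theta^{k}Z(t_k)+\sum_{i=0}^{k-1}\theta^i\Big(A(1-\lambda)^{-\alpha}(d-a)^{-\alpha}\lambda^{-i\alpha}+B\Big).
\]

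\emph{Choice of $\lambda$ and passage to the limit.} This is the only point needing care: the factors $\lambda^{-i\alpha}$ grow geometrically, so the weighted sum must still converge. Choose $\lambda$ with $\theta\lambda^{-\alpha}<1$, for instance $\lambda^{\alpha}=(1+\theta)/2$. If $\theta=0$, any $\lambda\in(0,1)$ works. With this choice, $\sum_i(\theta\lambda^{-\alpha})^i$ converges.

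Since $Z$ is bounded on $[a,d]$, $\theta^kZ(t_k)\le\theta^k\sup Z\to0$ as $k\to\infty$. Letting $k\to\infty$ therefore yields
\[
Z(a)\le \frac{A(1-\lambda)^{-\alpha}}{1-\theta\lambda^{-\alpha}}(d-a)^{-\alpha}+\frac{B}{1-\theta}.
\]
This is the claimed bound with
\[
C_3:=\max\Big\{\frac{(1-\lambda)^{-\alpha}}{1-\theta\lambda^{-\alpha}},\ \frac{1}{1-\theta}\Big\},
\]
which depends only on $\alpha$ and $\theta$.

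All points $t_k$ lie in $[a,d)$, and the hypothesis is applied only with $b<c<d$. The boundedness of $Z$ is used exactly once, to kill the term $\theta^kZ(t_k)$.
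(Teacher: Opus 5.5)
Your proof is correct and follows the same route as the paper. The paper fixes the same geometric partition $t_0=a$, $t_{j+1}-t_j=\lambda^j(1-\lambda)(d-a)$, says to iterate, and defers the details to Lemma 6.3 of Bonforte--Ibarrondo--Ispizua; you supply those details, namely choosing $\lambda$ so that $\theta\lambda^{-\alpha}<1$ (so the series converges) and using the boundedness of $Z$ to discard $\theta^kZ(t_k)$.
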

\begin{proof}
Choose a constant $\lambda \in (0, 1)$ such that 
$$t_0=a,\quad t_{j+1} - t_j = \lambda^{j}(1 - \lambda)(d-a).$$
We will complete the proof by iterating the inequality; see Lemma 6.3 in
Bonforte--Ibarrondo--Ispizua \cite{BII23} for more details.
\end{proof}

For readers' convenience, we recall the existence and uniqueness of limit solutions to \eqref{eq:Cauchy-Dirichlet} with $u_0 \in L^p_{\Phi_1}(\Omega)$ in our notation; see Theorem 6.15 of V\'azquez \cite{V07}.
\begin{thm}\label{thm:limit_sol}
For every nonnegative $u_0 \in L^p_{\Phi_1}(\Omega)$, there exists a unique limit solution $u \in C([0, \infty); L^p_{\Phi_1}(\Omega))$ to the Cauchy-Dirichlet problem \eqref{eq:Cauchy-Dirichlet}.
Furthermore, if $u$ and $v$ are two limit solutions to \eqref{eq:Cauchy-Dirichlet} with nonnegative initial data $u_{0}, v_{0} \in L^p_{\Phi_1}(\Omega)$ respectively, then the following \emph{weighted contraction principle} holds:
\begin{equation*}
\int_{\Omega}(u^p(x, t) - v^p(x, t))_{\pm}\Phi_1\,\ud x
\leq \int_{\Omega}(u_0^p-v_0^p)_{\pm}\Phi_1\,\ud x,
\qquad \forall t>0.
\end{equation*}
\end{thm}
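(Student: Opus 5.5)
The plan is to prove the weighted contraction first for bounded data, where weak solutions are regular enough to compute with. We then pass to general data $u_0\in L^p_{\Phi_1}(\Omega)$ through the monotone approximation that defines limit solutions. The contraction will control the construction in the norm of $C([0,\infty);L^p_{\Phi_1}(\Omega))$. This is essentially the route of Theorem 6.15 in V\'azquez \cite{V07}.

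\textbf{Step 1: bounded data.} Let $u,v$ be the weak solutions with bounded nonnegative data $u_0,v_0$. We test the difference of the equations $\partial_t(u^p-v^p)=\Delta(u-v)$ against $\Phi_1\,\eta_\varepsilon(u-v)$, where $\eta_\varepsilon$ is a smooth nondecreasing approximation of $\mathbf 1_{(0,\infty)}$. Because $s\mapsto s^p$ is increasing, $\{u>v\}=\{u^p>v^p\}$, so the time derivative term converges to $\frac{\ud}{\ud t}\int_\Omega (u^p-v^p)_+\Phi_1\,\ud x$. For the spatial term we use Kato's inequality $\Delta (u-v)_+\ge \mathbf 1_{\{u>v\}}\Delta(u-v)$ in its integrated, weighted form. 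Integrating by parts twice, and using $u=v=0$ and $\Phi_1=0$ on $\partial\Omega$, gives
\[
\int_\Omega \Delta(u-v)\,\mathbf 1_{\{u>v\}}\Phi_1\,\ud x\le \int_\Omega (u-v)_+\Delta\Phi_1\,\ud x=-\lambda_1\int_\Omega (u-v)_+\Phi_1\,\ud x\le 0 .
\]
The boundary term $\int_{\partial\Omega}(u-v)_+\partial_\nu\Phi_1$ vanishes because $u-v=0$ on $\partial\Omega$. Hence $t\mapsto \int_\Omega (u^p-v^p)_+\Phi_1\,\ud x$ is nonincreasing. The same argument with $-$ in place of $+$ gives the other sign.

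\textbf{Step 2: the limit.} Take $u_{0,j}=\min\{u_0,j\}$ and the corresponding solutions $u_j$, which are monotone in $j$ by comparison. By Step 1,
\[
\sup_{t\ge0}\int_\Omega |u_j^p-u_k^p|(t)\,\Phi_1\,\ud x\le \int_\Omega |u_{0,j}^p-u_{0,k}^p|\,\Phi_1\,\ud x\to0
\]
by dominated convergence, since $u_0^p\Phi_1\in L^1$. For $p\ge1$ and $a,b\ge0$ we have the elementary inequality $|a-b|^p\le|a^p-b^p|$. Therefore $(u_j)$ is Cauchy in $C([0,\infty);L^p_{\Phi_1}(\Omega))$, and its limit is the pointwise monotone limit $u$; continuity in time is inherited from the uniform convergence.

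Independence of the approximation follows from a further application of Step 1. If $\tilde u_{0,j}\nearrow u_0$ is another bounded monotone approximation, the contraction between $u_j$ and $\tilde u_j$ shows that the two limits coincide in $L^1_{\Phi_1}$. This gives uniqueness in the class of limit solutions. Finally, the weighted contraction for two limit solutions is obtained by applying Step 1 to $u_j,v_j$ and passing to the limit. The left side converges by the $L^1_{\Phi_1}$ convergence of $u_j^p$ and $v_j^p$, and the right side converges by dominated convergence. One also checks that $u$ is a weak solution in $\Omega\times(0,\infty)$ by passing to the limit in the weak formulation, tested against $C^\infty_c$ functions.

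\textbf{Main obstacle.} The delicate point is the rigorous justification of Step 1 for weak (not classical) solutions. This requires the time-derivative identity for $\int (u^p-v^p)_+\Phi_1$ and the weighted Kato inequality up to the boundary. I would handle this by working first with smooth positive data and lifting the boundary values to $\varepsilon>0$, so that the equation is uniformly parabolic and the solutions are classical. On these classical solutions the computation above is legitimate. One then lets $\varepsilon\to0$, using the $L^1$ stability of bounded weak solutions. Alternatively, Steklov time-averages together with the $\eta_\varepsilon$ regularization should suffice.
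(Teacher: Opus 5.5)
Your proposal is correct and follows the route the paper relies on here. The paper gives no proof of its own but cites Theorem 6.15 of V\'azquez \cite{V07}, which argues as you do: a weighted $L^1_{\Phi_1}$-contraction for bounded solutions (testing with $\Phi_1$ times an approximate sign of $u-v$, using $-\Delta\Phi_1=\lambda_1\Phi_1$ and $\Phi_1=u-v=0$ on $\partial\Omega$), then monotone approximation, where your inequality $|a-b|^p\le|a^p-b^p|$ turns the $L^1_{\Phi_1}$ Cauchy property of $u_j^p$ into convergence in $C([0,\infty);L^p_{\Phi_1}(\Omega))$. One minor caution: your closing aside is not needed for the statement, and the limit passage yields only the very weak formulation $\int\!\!\int (u^p\partial_t\psi+u\Delta\psi)=0$, not a weak (energy) formulation, since the paper stresses in Subsection~\ref{subsec:example} that limit solutions are in general not weak solutions.
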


The weighted contraction principle ensures that the limit solution is well-defined, that is, it is independent of the specific choice of approximating sequences.
For weak solutions used below, we shall use the standard formulation: for every $0<t_1<t_2<T$ and every admissible test function $\varphi$ vanishing on $\partial\Omega\times(0,T)$,
\begin{equation}\label{eq:weak-sol}
\begin{split}
  & \int_{t_1}^{t_2}\!\int_{\Omega} \left( u^p \, \partial_{\tau} \varphi - \nabla u \cdot \nabla \varphi \right) 
\, \mathrm{d}x \, \mathrm{d}\tau \\
= & \int_{\Omega} u(x, t_2)^p \, \varphi(x, t_2) \, \mathrm{d}x 
- \int_{\Omega} u(x, t_1)^p \, \varphi(x, t_1) \, \mathrm{d}x .
\end{split}
\end{equation}
It follows from V\'azquez \cite{V07} and Bonforte--V\'azquez \cite{BV15} that every limit solution is a very weak solution in the standard transposition sense.
More precisely, $u^p \in C([0,\infty); L^1_{\Phi_1}(\Omega))$, $u^p(\cdot,0)=u_0^p$ in $L^1_{\Phi_1}(\Omega)$, and
\begin{equation*}
\int_{0}^{\infty}\!\int_{\Omega}u^p \partial_t \psi\,\ud x\,\ud t
+ \int_{0}^{\infty}\!\int_{\Omega}u \Delta \psi\,\ud x\,\ud t=0
\end{equation*}
for every $\psi \in C_c^{\infty}((0,\infty)\times \overline{\Omega})$ satisfying $\psi=0$ on $\partial\Omega\times(0,\infty)$.
In general, limit solutions are merely very weak solutions but not weak solutions; see subsection \ref{subsec:example} for an example.
However, as indicated in the introduction, under our assumptions in Theorem \ref{thm:weighted-smoothing_1}, all such limit solutions are indeed classical solutions.
This distinction naturally raises the following question:
\begin{op}
If $u_0 \in L^p_{\Phi_1}(\Omega)$, can we establish the uniqueness or non-uniqueness of the very weak solution to \eqref{eq:Cauchy-Dirichlet}? 
In other words, does the equivalence between limit solutions and very weak solutions hold in this context?
\end{op}

\section{Weighted Smoothing Effects}\label{sec:Weighted_Smoothing_Effects}
Here and in the sequel, we assume $n \geq 1$, $p>1$, and that $\Omega$ is a smooth bounded domain, unless otherwise indicated.
In this section, we aim to establish weighted smoothing effects for limit solutions to the Cauchy-Dirichlet problem \eqref{eq:Cauchy-Dirichlet}.
We first establish the following energy inequality.
\begin{lem}\label{lem:energy_estimate}
Let $\theta\geq \theta_1>1$ and $0 \leq s_1 < s_2 < s<\infty$. 
Then there exists a constant $C_4=C_4(p, \theta_1)>0$ such that for any weak solution $u$ to \eqref{eq:Cauchy-Dirichlet} with $u_0 \in L^{\infty}(\Omega)$,
{
\begin{align*}
&\sup_{\tau \in (s_2, s]} \int_{\Omega} u(x,\tau)^{p+\theta} \Phi_1 \,\ud x
+ \int_{s_2}^{s}\!\int_{\Omega} |\nabla u^{(\theta+1)/2}|^{2} \Phi_1 \,\ud x\,\ud \tau\\
&\qquad
+\lambda_1\int_{s_2}^{s}\!\int_\Omega u^{\theta+1}\Phi_1\,\ud x\,\ud\tau
\leq \frac{C_4}{s_2 - s_1} \int_{s_1}^{s}\!\int_{\Omega} u^{p+\theta} \Phi_1 \,\ud x \, \ud \tau.
\end{align*}
}
\end{lem}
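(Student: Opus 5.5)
The plan is to test the equation with $\varphi=u^{\theta}\Phi_1\,\eta(\tau)$, where $\eta$ is a Lipschitz cutoff in time with $\eta(s_1)=0$, $\eta\equiv1$ on $[s_2,s]$, $0\le\eta\le1$ and $0\le\eta'\le (s_2-s_1)^{-1}$. This yields an exact energy identity, and each term of the lemma will appear with a coefficient bounded below by a constant depending only on $p,\theta_1$. Because $u_0\in L^\infty(\Omega)$, the solution is bounded by comparison. In fact, by the regularity recalled in the introduction, it is smooth in the interior and continuous up to $\partial\Omega$. Hence $u^\theta\Phi_1\eta$ is admissible in \eqref{eq:weak-sol} (it vanishes on $\partial\Omega$ and $\theta>1$). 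To be safe about the time derivative, I would first carry out the computation with Steklov averages, or for $u+\varepsilon$-type approximations, and then pass to the limit. I expect this justification to be the only delicate point; the algebra itself is routine.

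Formally, for fixed $\tau$ the time term is $\partial_\tau u^p\cdot u^\theta=\frac{p}{p+\theta}\partial_\tau u^{p+\theta}$. For the spatial term,
\[
-\int_\Omega\nabla u\cdot\nabla(u^\theta\Phi_1)\,\ud x=-\theta\int_\Omega u^{\theta-1}|\nabla u|^2\Phi_1\,\ud x-\frac1{\theta+1}\int_\Omega\nabla u^{\theta+1}\cdot\nabla\Phi_1\,\ud x .
\]
The first piece equals $-\frac{4\theta}{(\theta+1)^2}\int_\Omega|\nabla u^{(\theta+1)/2}|^2\Phi_1\,\ud x$. For the second, integrate by parts once more. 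The boundary term vanishes since $u=0$ on $\partial\Omega$, and \eqref{eq:first-eigenfunction} gives $\frac{1}{\theta+1}\int_\Omega u^{\theta+1}\Delta\Phi_1\,\ud x=-\frac{\lambda_1}{\theta+1}\int_\Omega u^{\theta+1}\Phi_1\,\ud x$. Note that this sign is favorable: it produces the $\lambda_1$ term on the left-hand side. Thus
\[
\frac{p}{p+\theta}\frac{\ud}{\ud\tau}\int_\Omega u^{p+\theta}\Phi_1\,\ud x+\frac{4\theta}{(\theta+1)^2}\int_\Omega|\nabla u^{(\theta+1)/2}|^2\Phi_1\,\ud x+\frac{\lambda_1}{\theta+1}\int_\Omega u^{\theta+1}\Phi_1\,\ud x=0 .
\]

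Next, multiply by $\frac{p+\theta}{p}\eta(\tau)$ and integrate over $(s_1,t)$ for any $t\in(s_2,s]$, integrating the time derivative by parts. Since $\eta(s_1)=0$, this gives
\begin{align*}
&\int_\Omega u(t)^{p+\theta}\Phi_1\,\ud x+\frac{4\theta(p+\theta)}{p(\theta+1)^2}\int_{s_2}^{t}\!\int_\Omega|\nabla u^{(\theta+1)/2}|^2\Phi_1\,\ud x\,\ud\tau
+\frac{(p+\theta)\lambda_1}{p(\theta+1)}\int_{s_2}^{t}\!\int_\Omega u^{\theta+1}\Phi_1\,\ud x\,\ud\tau\\
&\qquad\le\frac{1}{s_2-s_1}\int_{s_1}^{s}\!\int_\Omega u^{p+\theta}\Phi_1\,\ud x\,\ud\tau .
\end{align*}
Since $p>1$ we have $p+\theta\ge\theta+1$, so the gradient coefficient is at least $\frac{4\theta}{p(\theta+1)}\ge\frac{4\theta_1}{p(\theta_1+1)}$, and the $\lambda_1$ coefficient is at least $1/p$.

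Finally, take $t=s$ for the two integral terms and the supremum over $t\in(s_2,s]$ for the first term. Adding the resulting inequalities then gives the lemma with $C_4=3\max\{1,p,\,p(\theta_1+1)/(4\theta_1)\}$, which depends only on $p$ and $\theta_1$.
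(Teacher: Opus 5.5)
Your proposal is correct and follows essentially the same route as the paper: you test with a time cutoff times $u^{\theta}\Phi_1$, integrate by parts twice using $-\Delta\Phi_1=\lambda_1\Phi_1$ so that the $\lambda_1$ term appears with a favorable sign, and bound the coefficients from below using $p+\theta\ge\theta+1$ and $\theta\ge\theta_1$. The differences are cosmetic. The paper uses $\eta^2$ and recovers the supremum through a near-maximizing time $s_0$, whereas you use $\eta$ and apply the inequality at every $t\in(s_2,s]$ directly, which is slightly cleaner.
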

\begin{proof}
Since $u_0 \in L^{\infty}(\Omega)$, by the global smoothing effects \eqref{eq:global_smoothing_effects}, we have $u(\cdot, t) \in L^{\infty}(\Omega)$ for all $t>0$.
By the regularity results of \cite{JX23, JX25}, we have $u \in C^{2}(\overline{\Omega}\times (0, T^*))$.
Let $\eta \in C^{\infty}([0,\infty))$ satisfy $0\leq\eta\leq1$, $\eta=0$ on $[0,s_1]$, $\eta=1$ on $[s_2,\infty)$, and
\begin{equation*}
|\eta'| \leq \frac{2}{s_2 - s_1}.
\end{equation*}
Choosing $\varphi = \eta^2 u^{\theta}\Phi_1 \in C^1(\overline{\Omega}\times (0,T^*)), \theta>1$ as a test function in \eqref{eq:weak-sol}, we have
\begin{align*}
\int_{s_1}^{s}\!\int_{\Omega} u^p \partial_t (\eta^2 u^{\theta}\Phi_1) \,\ud x \, \ud \tau- \int_{s_1}^{s}\!\int_{\Omega}\nabla u \cdot \nabla (\eta^2 u^{\theta}\Phi_1)\,\ud x \, \ud \tau 
= \int_{\Omega} u(x,s)^{p+\theta} \Phi_1\,\ud x.
\end{align*}
Using integration by parts, we obtain
\begin{align*}
\int_{s_1}^{s}\!\int_{\Omega} u^p \partial_t (\eta^2 u^{\theta}\Phi_1) \,\ud x \, \ud \tau 
= & \int_{s_1}^{s}\!\int_{\Omega} \Big( \frac{\theta }{p}\eta^2 \Phi_1  u^{\theta}\partial_t (u^{p}) + 2\eta \eta' u^{p+\theta} \Phi_1 \Big) \,\ud x \, \ud \tau \\
= & -\frac{\theta }{p}\!\int_{s_1}^{s}\!\int_{\Omega}
\nabla u \cdot \nabla (\eta^2 u^{\theta}\Phi_1)\,\ud x \, \ud \tau
+ \int_{s_1}^{s}\!\int_{\Omega}
2\eta \eta' u^{p+\theta} \Phi_1 \,\ud x \, \ud \tau,
\end{align*}
and
\begin{align*}
\int_{s_1}^{s}\!\int_{\Omega}\nabla u \cdot \nabla (\eta^2 u^{\theta}\Phi_1)\,\ud x \, \ud \tau 
= & \int_{s_1}^{s}\!\int_{\Omega} \left( \theta \eta^2 u^{\theta-1} \Phi_1 |\nabla u|^{2} + \eta^2 u^{\theta} \nabla u \cdot \nabla \Phi_1 \right) \,\ud x \, \ud \tau \\
= & \int_{s_1}^{s}\!\int_{\Omega} \Big( \frac{4\theta}{(\theta+1)^2} \eta^2 \Phi_1 |\nabla u^{\frac{\theta+1}{2}}|^{2}  + \frac{\lambda_1}{1+\theta} \eta^2 u^{\theta+1} \Phi_1 \Big) \,\ud x \, \ud \tau.
\end{align*}
{Note that
\[
\min\left\{1,\frac{4\theta(p+\theta)}{p(\theta+1)^2},
\frac{p+\theta}{p(\theta+1)}\right\}
\geq
\min\left\{1,\frac{4\theta_1^2}{p(\theta_1+1)^2},\frac1p\right\}
=:c_1(p,\theta_1)^{-1}>0.
\]}
Combining the preceding identities, we obtain
{
\begin{align*}
&\int_{\Omega} u(x,s)^{p+\theta} \Phi_1 \,\ud x
+ \int_{s_2}^{s}\!\int_{\Omega} |\nabla u^{(\theta+1)/2}|^{2} \Phi_1 \,\ud x \, \ud \tau\\
&\qquad
+\lambda_1\int_{s_2}^{s}\!\int_\Omega u^{\theta+1}\Phi_1\,\ud x\,\ud\tau
\leq \frac{4c_1}{s_2 - s_1} \int_{s_1}^{s}\!\int_{\Omega} u^{p+\theta} \Phi_1 \,\ud x \, \ud \tau.
\end{align*}
}
Notice that there exists $s_0 \in (s_2, s)$ such that
\begin{align*}
\frac{1}{2}\sup_{\tau \in (s_2, s]} \int_{\Omega} u(x,\tau)^{p+\theta} \Phi_1 \,\ud x 
\leq \int_{\Omega} u(x,s_0)^{p+\theta} \Phi_1 \,\ud x \leq \frac{4c_1}{s_2 - s_1} \int_{s_1}^{s}\!\int_{\Omega} u^{p+\theta} \Phi_1 \,\ud x \, \ud \tau,
\end{align*}
where we have used the last inequality for $s=s_0$.
Combining the above two inequalities, we complete the proof with $C_4 = 12c_1>0$.
\end{proof}

\begin{thm}\label{thm:weighted-smoothing}
Suppose that $u$ is a limit solution to \eqref{eq:Cauchy-Dirichlet} with $u_0 \in L^{r}_{\Phi_1}(\Omega)$, where $r>0$ satisfies \eqref{eq:initial_datum_range_2}.
Then for any $0 \leq t_0 < t < \infty$, there exists a constant $C_5=C_5(n,p,r,\Omega,q)>0$ such that
\begin{equation}\label{equ:L_infty-to-Lr}
\|u(\cdot,t)\|_{L^{\infty}(\Omega)} 
\leq C_5  ( t - t_0  )^{-\frac{q+1}{r - q(p-1)}} \Big( \int_{t_0}^{t}\!\int_{\Omega} u^{r} \Phi_1 \,\ud x \, \ud \tau \Big)^{\frac{1}{r- q(p-1)}},
\end{equation}
where $q = \frac{n+1}{2}$ if $n \geq 2$, while for $n=1$ we fix any $q\in\bigl(1,\frac{r}{p-1}\bigr)$.
\end{thm}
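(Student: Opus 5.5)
The plan is to run a Moser iteration in the weighted space $L^{m}_{\Phi_1}$, driven by Lemma \ref{lem:energy_estimate} and Lemma \ref{lem:weighted-Sobolev-variant}. Then I would close the estimate at the level of $\int\!\int u^r\Phi_1$ by interpolation and the De Giorgi Lemma \ref{lem:iteration}.

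\emph{Reduction.} First I would prove \eqref{equ:L_infty-to-Lr} for the approximations $u_j$ with bounded data $u_{0,j}=\min\{u_0,j\}$. For these, Lemma \ref{lem:energy_estimate} applies and $u_j$ is classical in $\overline\Omega\times(0,T^*)$. Since the constant will not depend on $j$ and $u_j\uparrow u$, monotone convergence then gives the statement for the limit solution.

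\emph{One iteration step.} Fix $\theta$, and apply Lemma \ref{lem:weighted-Sobolev-variant} on $(s_2,s)$ to $f=u^{(\theta+1)/2}$. I choose $\sigma$ so that $f^{2(\sigma-1)q}=u^{p+\theta}$, that is,
\[
\sigma=1+\frac{p+\theta}{(\theta+1)q}.
\]
The requirement $\sigma<q^*/2=q/(q-1)$ is equivalent to $\theta>q(p-1)-p$. With this choice, the gradient/$L^2$ factor and the $\sup$ factor in Lemma \ref{lem:weighted-Sobolev-variant} are exactly those controlled by Lemma \ref{lem:energy_estimate}. Setting $m=p+\theta$, this yields
\[
\int_{s_2}^{s}\!\!\int_\Omega u^{m(1+\frac1q)-(p-1)}\Phi_1
\le C\,(s_2-s_1)^{-1-\frac1q}\Big(\int_{s_1}^{s}\!\!\int_\Omega u^{m}\Phi_1\Big)^{1+\frac1q}.
\]
The exponent map $m\mapsto m(1+1/q)-(p-1)$ has repelling fixed point $q(p-1)$. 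Hence, starting from any $m_0>q(p-1)$, the exponents $m_k$ grow geometrically to $\infty$. The condition $m_0=r>q(p-1)$ is precisely \eqref{eq:initial_datum_range_2}, because $q=\frac{n+1}{2}$ and $r\ge p>q(p-1)$ exactly when $p<\frac{n+1}{n-1}$. I would take nested intervals $s_k\uparrow t_0+\frac{t-t_0}{2}$ with $s_{k+1}-s_k\sim 2^{-k}(t-t_0)$, and pass to the limit in the usual Moser way (product of constants $C^{k}$ with geometric weights converges). This gives $\sup_{(t_0+\frac{t-t_0}{2},t)}\|u\|_\infty\le C(t-t_0)^{-a}\big(\int\!\int u^{m_0}\Phi_1\big)^{b}$.

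\emph{Main obstacle: starting exponents and closing at level $r$.} Lemma \ref{lem:energy_estimate} is stated for $\theta>1$, while $r$ close to $p$ gives $\theta=r-p$ near $0$. I would first try to extend the lemma to $\theta\ge\theta_1>0$. The test function $\eta^2u^\theta\Phi_1$ is still admissible for positive classical $u_j$, possibly after replacing $u$ by $u+\varepsilon$ and letting $\varepsilon\to0$. The constant $c_1$ stays bounded in terms of $\theta_1>0$. If that fails, I would start the iteration from a fixed large exponent $m_0>\max\{p+1,q(p-1)\}$ and recover the $L^r$ level by interpolation, which is the alternative route anyway. Specifically, write $\int\!\int u^{m_0}\Phi_1\le \|u\|_{L^\infty(Q)}^{m_0-r}\int\!\int u^r\Phi_1$ and apply Young's inequality with a small weight on $\|u\|_\infty$. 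Setting $Z(b)=\sup_{\tau\in(b,t)}\|u(\cdot,\tau)\|_{L^\infty}$, this gives $Z(b)\le A(c-b)^{-\alpha}+\tfrac12 Z(c)$ with $A$ a power of $\int_{t_0}^t\!\int u^r\Phi_1$. Lemma \ref{lem:iteration} then removes the $\tfrac12Z(c)$ term; this is legitimate because $Z$ is bounded for bounded data.

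\emph{Exponents.} Finally, I would read off the exponents by scaling. The map $u\mapsto Au(\cdot,A^{1-p}\,\cdot)$ preserves the equation on $\Omega$. It multiplies $\int\!\int u^r\Phi_1$ by $A^{r+p-1}$ and time lengths by $A^{p-1}$. Consistency forces $b=1/(r-q(p-1))$ and $a=(q+1)/(r-q(p-1))$, which is \eqref{equ:L_infty-to-Lr}. I expect the bookkeeping of the Moser products to confirm these values directly.
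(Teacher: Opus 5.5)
Your proposal is correct and is essentially the paper's proof. The paper runs the same Moser step: Lemma \ref{lem:weighted-Sobolev-variant} is applied to $u^{(\vartheta+1)/2}$, and the $\sup$ factor $\int u^{p+\vartheta}\Phi_1$ is controlled by Lemma \ref{lem:energy_estimate}. This gives the same recursion $m\mapsto(1+q^{-1})m-(p-1)$, started from an exponent $>\max\{2p,q(p-1)\}$. The paper then closes at level $r$ by exactly your interpolation--Young--De Giorgi argument.

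Three small corrections to your write-up:
\begin{itemize}
\item The interpolation route is not optional. At $r=p$ one has $\theta=0$, and Lemma \ref{lem:energy_estimate} gives no gradient control there.
\item With your definition of $Z$, the inequality you actually get is $Z(c)\le A(c-b)^{-\alpha}+\tfrac12 Z(b)$ for $b<c$. You must time-reverse $Z$ before applying Lemma \ref{lem:iteration}, as the paper does.
\item Scaling alone only gives the single relation $(p-1)a+1=(r+p-1)b$. The values of $a$ and $b$ come from the iteration bookkeeping, with $\alpha=1+q^{-1}$:
\[
\frac{\alpha^k}{m_{k+1}}\to\frac{1}{m_0-q(p-1)},\qquad \frac{\sum_{j\le k}\alpha^j}{m_{k+1}}\to\frac{q+1}{m_0-q(p-1)}.
\]
\end{itemize}
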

\begin{proof}
We first reduce to bounded initial data.  Let
$u_{0,k}:=\min\{u_0,k\}$ and let $u_k$ be the corresponding solutions.
By the comparison principle, the sequence $\{u_k\}$ is nondecreasing;
by the definition of a limit solution, $u_k\uparrow u$ a.e.
Once the desired estimate is proved for $u_k$, with a constant
independent of $k$, since $u_k\leq u$, we have
\begin{align*}
\|u_k(\cdot,t)\|_{L^\infty(\Omega)}
&\leq C_5(t-t_0)^{-\frac{q+1}{r-q(p-1)}}
 \Big(\int_{t_0}^t\!\int_\Omega u_k^r\Phi_1\,\ud x\,\ud\tau\Big)^{\frac1{r-q(p-1)}}\\
&\leq C_5(t-t_0)^{-\frac{q+1}{r-q(p-1)}}
 \Big(\int_{t_0}^t\!\int_\Omega u^r\Phi_1\,\ud x\,\ud\tau\Big)^{\frac1{r-q(p-1)}}.
\end{align*}
Letting $k\to\infty$ proves \eqref{equ:L_infty-to-Lr}, since
$u_k(\cdot,t)\uparrow u(\cdot,t)$ a.e.  Thus it suffices to consider
$u_0\in L^\infty(\Omega)\cap L^r_{\Phi_1}(\Omega)$.

Let $\sigma \in (1, q^*/2)$, where $1/q + 2/q^* = 1$.
Note that $q^{*} = \frac{2 n+ 2}{n-1}$ for $n \geq 2$, while $q^*=2q/(q-1)>2$ for $n=1$.
{
Set
\[
(p+\theta)(\sigma-1)q=2p+\theta-1,
\qquad
\text{that is,}\qquad
\sigma=1+\frac{2p+\theta-1}{(p+\theta)q}.
\]
By Lemma \ref{lem:weighted-Sobolev-variant}, applied to
$f=u^{(p+\theta)/2}$, we have
\begin{align*}
\int_{s_2}^{t}\!\int_\Omega u^{(p+\theta)\sigma}\Phi_1\,\ud x\,\ud\tau
&\leq C_1
\left(\int_{s_2}^{t}\!\int_\Omega
\bigl(|\nabla u^{(p+\theta)/2}|^2+u^{p+\theta}\bigr)
\Phi_1\,\ud x\,\ud\tau\right)\\
&\quad\times
\left(\mathop{\rm ess\,sup}_{\tau\in(s_2,t]}
\int_\Omega u^{2p+\theta-1}\Phi_1\,\ud x\right)^{1/q}.
\end{align*}
Apply Lemma \ref{lem:energy_estimate} with the parameter
$\vartheta=p+\theta-1$; here $\vartheta\geq p>1$, so the constant is
uniform throughout the iteration.  Since
$\vartheta+1=p+\theta$ and $p+\vartheta=2p+\theta-1$, it follows that
\begin{align*}
&\int_{s_2}^{t}\!\int_\Omega
\bigl(|\nabla u^{(p+\theta)/2}|^2+u^{p+\theta}\bigr)
\Phi_1\,\ud x\,\ud\tau\\
&\qquad\leq
\frac{C_E}{s_2-s_1}
\int_{s_1}^{t}\!\int_\Omega u^{2p+\theta-1}\Phi_1\,\ud x\,\ud\tau,
\qquad
C_E:=C_4\max\{1,\lambda_1^{-1}\},
\end{align*}
and
\[
\mathop{\rm ess\,sup}_{\tau\in(s_2,t]}
\int_\Omega u^{2p+\theta-1}\Phi_1\,\ud x
\leq
\frac{C_4}{s_2-s_1}
\int_{s_1}^{t}\!\int_\Omega u^{2p+\theta-1}\Phi_1\,\ud x\,\ud\tau.
\]
Consequently,
\begin{align}\label{eq:norm_estimate}
\int_{s_2}^{t}\!\int_{\Omega} u^{(p+\theta)\sigma} \Phi_1 \,\ud x \, \ud \tau
\leq \frac{C_M}{(s_2-s_1)^{1+q^{-1}}}
\left(\int_{s_1}^{t}\!\int_\Omega
u^{2p+\theta-1}\Phi_1\,\ud x\,\ud\tau\right)^{1+q^{-1}},
\end{align}
where $C_M:=C_1C_EC_4^{1/q}$.
}
Define 
$$ r_k = 2p + \theta_k -1 = (p+\theta_k)(\sigma_k-1)q, \quad r_{k+1} = (p + \theta_k)\sigma_k,$$
so that
$$ r_{k+1} =(1+q^{-1}) r_k + 1 - p,\quad 
\sigma_k = 1 + \frac{r_k}{(r_k+1-p) q},\quad \theta_k = r_k+1-2p.$$
Consequently, we have the explicit formula 
$$ r_{k+1} = \Big(\frac{q+1}{q}\Big)^k(r_1-q(p-1))+ q(p-1).$$
If we choose 
$$r_1 > \max\big\{2p, q(p-1)\big\},$$
then $\{r_k\}$ and $\{\theta_k\}$ are increasing sequences such that for $k \in \mathbb{Z}_{+}$, 
$$ r_{k+1}>r_k \geq r_1>p,\quad \theta_{k+1} >\theta_k \geq r_1+1-2p >1, \quad \sigma_k \in \Big(1, \frac{q^*}{2}\Big),$$ 
and hence the above estimates are well defined.
Moreover, it follows from \eqref{eq:norm_estimate} that
{
\begin{equation}\label{eq:iteration_norm}
\int_{t_{k+1}}^{t}\!\int_{\Omega} u^{r_{k+1}} \Phi_1 \,\ud x \, \ud \tau  
\leq  \frac{C_M}{(t_{k+1} - t_k)^{1 + q^{-1}}} \Big( \int_{t_{k}}^{t}\!\int_{\Omega} u^{r_{k}} \Phi_1 \,\ud x \, \ud \tau \Big)^{1 + q^{-1}},
\end{equation}
}
where $\{t_k\}$ is a sequence chosen by
$$ t_1\geq 0,\quad t_{\infty} = \lim_{k \to \infty} t_k,\quad t_{k+1} - t_k = \frac{\tau}{k^2}~~\text{with}~~\tau = (t_{\infty} - t_1)\Big(\sum_{k=1}^\infty \frac{1}{k^2}\Big)^{-1}.$$
{Set $\alpha=1+q^{-1}$ and $C=C_M^{1/\alpha}$.}
Thus we could rewrite \eqref{eq:iteration_norm} as
\begin{equation*}
\Big(\int_{t_{k+1}}^{t}\!\int_{\Omega} u^{r_{k+1}} \Phi_1 \,\ud x \, \ud \tau\Big)^{\frac{1}{r_{k+1}}}
\leq  \Big( \frac{C}{\tau} \Big)^{\frac{\alpha}{r_{k+1}}}k^{\frac{2\alpha}{r_{k+1}}} \Big( \int_{t_{k}}^{t}\!\int_{\Omega} u^{r_{k}} \Phi_1 \,\ud x \, \ud \tau \Big)^{\frac{1}{r_k}\frac{\alpha r_k}{r_{k+1}}}.
\end{equation*}
Iterating this inequality, we obtain
\begin{equation*}
\Big(\int_{t_{k+1}}^{t}\!\int_{\Omega} u^{r_{k+1}} \Phi_1 \,\ud x \, \ud \tau\Big)^{\frac{1}{r_{k+1}}}
\leq  \Big( \frac{C}{\tau} \Big)^{\frac{\sum\limits^{k}_{j=1}\,\alpha^j}{r_{k+1}}}
\prod_{j=1}^{k}  j^{\frac{2\alpha^{k-j+1}}{r_{k+1}}} \Big( \int_{t_1}^{t}\!\int_{\Omega} u^{r_{1}} \Phi_1 \,\ud x \, \ud \tau \Big)^{\frac{\alpha^k}{r_{k+1}}}.
\end{equation*}
Notice that 
\begin{align*}
\lim_{k \to \infty}\,\frac{\sum\limits_{j=1}^k\, \alpha^j}{r_{k+1}} = \frac{q+1}{r_1-q(p-1)}, \quad
\lim_{k \to \infty}\,\frac{\alpha^k}{r_{k+1}} = \frac{1}{r_1 - q(p-1)}.
\end{align*}
Besides, there exists a constant $c_2>0$ such that
\begin{align*}
\prod_{j=1}^{k}\,j^{\frac{2\alpha^{k-j+1}}{r_{k+1}}} 
= \exp\Big( \frac{2\alpha}{r_{1}-q(p-1)+q(p-1)\alpha^{-k}} \sum_{j=1}^{k}\, \alpha^{-j} \ln j \Big)
\rightarrow c_2,~~\text{as}~~k\to \infty
\end{align*}
since
$\lim\limits_{k\rightarrow\infty} \frac{\alpha^{-k-1}\ln(k+1)}{\alpha^{-k}\ln(k)} = \alpha^{-1} < 1$.
As a consequence, there is a constant $c_3>0$ such that
\begin{align*}
\|u\|_{L^{\infty}(\Omega \times (t_{\infty}, t])} 
\leq & c_2C^{\frac{q+1}{r_1 - q(p-1)}} \tau^{-\frac{q+1}{r_1 - q(p-1)}} \Big( \int_{t_1}^{t}\!\int_{\Omega} u^{r_{1}} \Phi_1 \,\ud x \, \ud \tau \Big)^{\frac{1}{r_1 - q(p-1)}}\\
= & c_3 \left( t_{\infty} - t_1 \right)^{-\frac{q+1}{r_1 - q(p-1)}} \Big( \int_{t_1}^{t}\!\int_{\Omega} u^{r_{1}} \Phi_1 \,\ud x \, \ud \tau \Big)^{\frac{1}{r_1 - q(p-1)}}.
\end{align*}
If the exponent $r$ in the statement satisfies $r>\max\{2p,q(p-1)\}$, we take $r_1=r$, $t_1=t_0$, and $t_\infty=(t_0+t)/2$ in the preceding estimate.  Since $t\in(t_\infty,t]$, this proves \eqref{equ:L_infty-to-Lr} in this case.

It remains to treat $q(p-1)<r\leq\max\{2p,q(p-1)\}$.  Fix $r_1>\max\{2p,q(p-1)\}$ and set
\[
Z(\tau):=\|u\|_{L^\infty(\Omega\times(\tau,t])}.
\]
For $t_0\leq c<d<t$, by the preceding high-exponent estimate, applied
with $t_1=c$ and $t_\infty=d$, and H\"older's inequality, we have
\begin{align*}
Z(d)
&\leq C(d-c)^{-\frac{q+1}{r_1-q(p-1)}}
Z(c)^{\frac{r_1-r}{r_1-q(p-1)}}
\Big(\int_{t_0}^{t}\!\int_\Omega u^r\Phi_1\,\ud x\,\ud\tau\Big)^{\frac1{r_1-q(p-1)}}.
\end{align*}
By Young's inequality, we have
\begin{align*}
Z(d)
\leq \frac12 Z(c)+C(d-c)^{-\frac{q+1}{r-q(p-1)}}
\Big(\int_{t_0}^{t}\!\int_\Omega u^r\Phi_1\,\ud x\,\ud\tau\Big)^{\frac1{r-q(p-1)}}.
\end{align*}
Let $b=(t_0+t)/2$ and define $\widetilde Z(\rho)=Z(t_0+b-\rho)$ on $[t_0,b]$.  The last inequality has exactly the form required by Lemma \ref{lem:iteration}; hence
\[
Z(b)\leq C(b-t_0)^{-\frac{q+1}{r-q(p-1)}}
\Big(\int_{t_0}^{t}\!\int_\Omega u^r\Phi_1\,\ud x\,\ud\tau\Big)^{\frac1{r-q(p-1)}}.
\]
Since $\|u(\cdot,t)\|_\infty\leq Z(b)$ and $b-t_0=(t-t_0)/2$, this proves \eqref{equ:L_infty-to-Lr}.
This completes the proof.
\end{proof}
We recall the following monotonicity properties of the $L^r_{\Phi_1}(\Omega)$ norms of solutions to \eqref{eq:Cauchy-Dirichlet}.
\begin{lem}\label{lem:monotonicity}
Let $r \geq p$, and let $u$ be a limit solution to \eqref{eq:Cauchy-Dirichlet} with $ u_0 \in L^{r}_{\Phi_1}(\Omega)$.
Then for any $0 \leq t_0 < t < \infty$,
\begin{equation}\label{eq:monotonicity_norm}
\|u(\cdot, t)\|_{L^{r}_{\Phi_1}(\Omega)} \leq \|u(\cdot, t_0)\|_{L^{r}_{\Phi_1}(\Omega)}.
\end{equation}
Moreover, if $r=p$, we have
\begin{equation}\label{eq:monotonicity_norm_p_I}
\|u(\cdot, t)\|_{L^{p}_{\Phi_1}(\Omega)}^{p-1} \geq \|u(\cdot, t_0)\|_{L^{p}_{\Phi_1}(\Omega)}^{p-1} -\frac{\lambda_1(p-1)}{p}\|\Phi_1\|_{L^1(\Omega)}^{(p-1)/p} (t - t_0).
\end{equation}
\end{lem}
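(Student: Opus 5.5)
The plan is to prove both inequalities for bounded data by differentiating the weighted norms in time, and then to pass to limit solutions by monotone approximation. First I reduce to $t_0=0$. By the uniqueness of limit solutions (Theorem \ref{thm:limit_sol}), the restriction of $u$ to $[t_0,\infty)$ is the limit solution with datum $u(\cdot,t_0)$. For $t_0>0$ this datum lies in $L^r_{\Phi_1}(\Omega)$ by the case $t_0=0$ of \eqref{eq:monotonicity_norm}, which is proved first. Next, for $u_{0,k}=\min\{u_0,k\}$ the solutions $u_k$ are bounded for $t>0$. By \eqref{eq:global_smoothing_effects} and the regularity of \cite{JX23,JX25}, they belong to $C^2(\overline\Omega\times(0,T^*))$ and vanish on $\partial\Omega$. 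For these, set
\[
y_r(\tau):=\int_\Omega u_k(x,\tau)^r\Phi_1\,\ud x .
\]

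For \eqref{eq:monotonicity_norm} I write $u^r=(u^p)^{r/p}$ and use $\partial_t u^p=\Delta u$ to get
\[
y_r'=\frac rp\int_\Omega u^{r-p}\,\Delta u\,\Phi_1\,\ud x
=-\frac rp\Big[(r-p)\int_\Omega u^{r-p-1}|\nabla u|^2\Phi_1\,\ud x+\int_\Omega u^{r-p}\nabla u\cdot\nabla\Phi_1\,\ud x\Big].
\]
The last integral equals $\frac{1}{r-p+1}\int_\Omega\nabla u^{r-p+1}\cdot\nabla\Phi_1\,\ud x=\frac{\lambda_1}{r-p+1}\int_\Omega u^{r-p+1}\Phi_1\,\ud x\ge0$. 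Here I integrate by parts once more, and the boundary terms vanish because $u=\Phi_1=0$ on $\partial\Omega$. Hence $y_r'\le0$. When $0<r-p<1$ the factor $u^{r-p-1}$ is singular where $u=0$. I will handle this by running the computation with $u^{r-p}$ replaced by $(u+\varepsilon)^{r-p}-\varepsilon^{r-p}$, which vanishes on $\partial\Omega$, and letting $\varepsilon\to0$; all terms have a sign, so nothing is lost.

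For $r=p$ the computation is simpler:
\[
y_p'=\int_\Omega\Delta u\,\Phi_1\,\ud x=-\lambda_1\int_\Omega u\Phi_1\,\ud x\ \ge\ -\lambda_1\|\Phi_1\|_{L^1(\Omega)}^{(p-1)/p}\,y_p^{1/p},
\]
where the inequality is H\"older with respect to the measure $\Phi_1\,\ud x$. Then, wherever $y_p>0$,
\[
\big(y_p^{(p-1)/p}\big)'=\tfrac{p-1}{p}\,y_p^{-1/p}y_p'\ge-\tfrac{\lambda_1(p-1)}{p}\|\Phi_1\|_{L^1(\Omega)}^{(p-1)/p}.
\]
Since $y_p^{(p-1)/p}=\|u\|_{L^p_{\Phi_1}}^{p-1}$, integrating from $t_0$ to $t$ gives \eqref{eq:monotonicity_norm_p_I}. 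If $y_p$ vanishes at some time $t_1$, the solution has become extinct, since $y_p$ is nonincreasing. The right-hand side of the desired inequality is nonincreasing in $t$, so the bound then extends from $[t_0,t_1]$ to all later $t$.

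The step I expect to need the most care is the passage to the limit $k\to\infty$, together with the behaviour near $\tau=0$ where $u_k$ is merely bounded. For the latter I apply the computation on $[\delta,t]$ and let $\delta\to0$, using $u_k^p\in C([0,\infty);L^1_{\Phi_1})$ and boundedness of $u_k$. For the former:
\begin{itemize}
\item For \eqref{eq:monotonicity_norm}, monotone convergence gives $\int u_k(t)^r\Phi_1\to\int u(t)^r\Phi_1$, while $\int u_{0,k}^r\Phi_1\le\int u_0^r\Phi_1$.
\item For \eqref{eq:monotonicity_norm_p_I}, the weighted contraction principle of Theorem \ref{thm:limit_sol} gives $u_k(\cdot,\tau)^p\to u(\cdot,\tau)^p$ in $L^1_{\Phi_1}$ for every $\tau$, so both sides converge.
\end{itemize}
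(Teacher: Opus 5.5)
Your proposal is correct and follows the paper's argument. You differentiate $\int_\Omega u^r\Phi_1\,\ud x$ for the truncated-data solutions, integrate by parts with $-\Delta\Phi_1=\lambda_1\Phi_1$ to see that both resulting terms are nonpositive, use H\"older with respect to $\Phi_1\,\ud x$ when $r=p$, and pass to the limit by monotone approximation. Your extra care fills in details the paper leaves implicit: the regularization when $0<r-p<1$, the case $y_p=0$, the limit $\tau\downarrow 0$, and the convergence of $\int_\Omega u_k^p\Phi_1\,\ud x$ via the contraction principle. The reduction to $t_0=0$ is harmless but unnecessary, since you could apply the computation to $u_k$ directly on $[t_0,t]$.
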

\begin{proof}
The computation below is first justified for the bounded
solutions with truncated data $\min\{u_0,j\}$, and then passed to the
limit by monotone convergence.
For any $\tau \in (t_0, t)$, we have
\begin{multline*}
\frac{\ud}{\ud\tau}\int_\Omega u^r\Phi_1\,\ud x
= \frac{r}{p}\int_\Omega u^{r-p}\partial_\tau(u^p)\Phi_1\,\ud x \\
= \frac{r}{p} \int_{\Omega} u^{r-p} \Delta u \Phi_1 \,\ud x \\
= -\frac{r(r-p)}{p} \int_{\Omega} u^{r-p-1} |\nabla u|^{2} \Phi_1 \,\ud x 
- \frac{r}{p} \int_{\Omega} u^{r-p}\nabla u \cdot \nabla \Phi_1 \,\ud x \\
= -\frac{4r(r-p)}{p(r-p+1)^2} \int_{\Omega} |\nabla u^{\frac{r-p+1}{2}}|^{2} \Phi_1 \,\ud x 
- \frac{r\lambda_1}{p(r-p+1)} \int_{\Omega} u^{r-p+1} \Phi_1 \,\ud x 
\leq 0,
\end{multline*}
and so \eqref{eq:monotonicity_norm} holds.
In particular, if $r=p$, it follows from the H\"older inequality that
\begin{align*}
\frac{\ud }{\ud \tau} \int_{\Omega} u^p\,\Phi_1 \,\ud x 
= & \int_{\Omega} \Delta u\,\Phi_1 \,\ud x
=  - \lambda_1 \int_{\Omega} u\,\Phi_1 \,\ud x \\
\geq & -\lambda_1 \Big(\int_{\Omega} u^p\,\Phi_1 \,\ud x \Big)^{1/p} \Big(\int_{\Omega} \Phi_1 \,\ud x \Big)^{(p-1)/p},
\end{align*}
and hence
\begin{equation*}
\frac{\ud }{\ud \tau} \Big(\int_{\Omega} u^p\,\Phi_1 \,\ud x \Big)^{(p-1)/p} \geq -\frac{\lambda_1(p-1)}{p} \Big(\int_{\Omega} \Phi_1 \,\ud x \Big)^{(p-1)/p}.
\end{equation*}
Integrating the above inequality over $\tau \in (t_0, t)$, we complete the proof.
\end{proof}
Consequently, we can prove
\begin{cor}\label{cor:extinction-time}
Let $p \in \big(1, \frac{n+1}{(n-1)_+}\big)$.
Suppose that $u$ is a limit solution to \eqref{eq:Cauchy-Dirichlet} with $ u_0 \in L^{p}_{\Phi_1}(\Omega)$ and $T^*$ denotes the extinction time.
Then for $0 \leq t_0 \leq t < T^*$, 
\begin{equation}\label{eq:monotonicity_norm_p_II}
\begin{split}
\frac{\lambda_1(p-1)}{p}\|\Phi_1\|_{L^1(\Omega)}^{1-p^{-1}} (T^* - t)
\geq & \,\|u(\cdot, t)\|_{L^{p}_{\Phi_1}(\Omega)}^{p-1} \\
\geq & \, \|u(\cdot, t_0)\|_{L^{p}_{\Phi_1}(\Omega)}^{p-1} -\frac{\lambda_1(p-1)}{p}\|\Phi_1\|_{L^1(\Omega)}^{1-p^{-1}} (t - t_0).
\end{split}
\end{equation}
Consequently,
\begin{equation*}
T^* \geq \frac{p}{\lambda_1(p-1)} \|\Phi_1\|_{L^1(\Omega)}^{(1-p)/p} \|u_0\|_{L^{p}_{\Phi_1}(\Omega)}^{p-1}.
\end{equation*}
\end{cor}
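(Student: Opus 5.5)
The plan is to combine the lower bound \eqref{eq:monotonicity_norm_p_I} from Lemma \ref{lem:monotonicity}, applied on the interval $[t,T^*)$, with the fact that the solution vanishes at $T^*$. The second inequality in \eqref{eq:monotonicity_norm_p_II} is exactly \eqref{eq:monotonicity_norm_p_I} for $0\le t_0\le t<T^*$, so nothing new is needed there.

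For the first inequality, fix $t<T^*$ and apply \eqref{eq:monotonicity_norm_p_I} with $t_0$ replaced by $t$ and $t$ replaced by $t'\in(t,T^*)$. Set $K:=\frac{\lambda_1(p-1)}{p}\|\Phi_1\|_{L^1(\Omega)}^{1-p^{-1}}$. This gives
\[
\|u(\cdot,t')\|_{L^p_{\Phi_1}(\Omega)}^{p-1}\ \ge\ \|u(\cdot,t)\|_{L^p_{\Phi_1}(\Omega)}^{p-1}-K(t'-t).
\]
Letting $t'\uparrow T^*$, the left side should tend to $0$. The result then follows after rearranging.

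Justifying that $\|u(\cdot,t')\|_{L^p_{\Phi_1}}\to 0$ as $t'\uparrow T^*$ is the step that needs care, and it is where the hypothesis $p<\frac{n+1}{(n-1)_+}$ enters. The argument runs as follows.
\begin{itemize}
\item By Theorem \ref{thm:limit_sol}, $u\in C([0,\infty);L^p_{\Phi_1}(\Omega))$.
\item By definition of the extinction time, $u(\cdot,T^*)\equiv0$.
\item Continuity in $L^p_{\Phi_1}$ then gives the limit $0$ directly.
\end{itemize}
If one prefers to avoid relying on continuity at $T^*$, there is an alternative. Since $r=p$ satisfies \eqref{eq:initial_datum_range_2}, Theorem \ref{thm:weighted-smoothing_1} makes $u$ bounded and classical for positive times. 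Extinction then means $u(\cdot,t')\to0$ uniformly, and dominated convergence against the integrable weight $\Phi_1$ gives the same conclusion. If $T^*=\infty$ were possible, the inequality would be vacuous. In fact, the displayed bound itself shows that a positive norm cannot persist forever, since the norm is finite.

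Finally, take $t=t_0=0$ in the first inequality to get $K\,T^*\ge\|u_0\|_{L^p_{\Phi_1}}^{p-1}$, which is the stated lower bound on $T^*$ after dividing by $K$. The only technical point is the limit at $T^*$. Everything else is Lemma \ref{lem:monotonicity} applied on a subinterval, which is legitimate because the limit solution restricted to $[t,\infty)$ is again the limit solution with datum $u(\cdot,t)\in L^p_{\Phi_1}(\Omega)$, by the semigroup property coming from uniqueness in Theorem \ref{thm:limit_sol}.
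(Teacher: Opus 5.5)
Your proof is correct and follows the argument the paper leaves implicit. The paper gives no written proof; the corollary comes from \eqref{eq:monotonicity_norm_p_I} by sending the later time to $T^*$, using $u(\cdot,T^*)=0$ and $u\in C([0,\infty);L^p_{\Phi_1}(\Omega))$ from Theorem \ref{thm:limit_sol}, and then setting $t=0$. A few side remarks are off but harmless: the limit step does not use $p<\frac{n+1}{(n-1)_+}$; your claim that the displayed bound rules out $T^*=\infty$ does not follow (the paper simply takes $T^*<\infty$); and the optional uniform-convergence route would need an independent justification, since the paper's $L^\infty$ decay near $T^*$ (Corollary \ref{cor:weighted-smoothing}) is itself derived from this corollary.
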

By Theorem \ref{thm:weighted-smoothing}, Lemma
\ref{lem:monotonicity}, and Corollary \ref{cor:extinction-time}, we
obtain the following corollary and hence Theorem
\ref{thm:weighted-smoothing_1}.
\begin{cor}\label{cor:weighted-smoothing}
Suppose that the conditions in Theorem \ref{thm:weighted-smoothing} hold.
Then for $0 \leq t_0 < t < \infty$, we have
\begin{equation}\label{equ;u-L-infty-estimate1}
\|u(\cdot,t)\|_{L^{\infty}(\Omega)} 
\leq C_5 ( t - t_0 )^{-\frac{q}{r - q(p-1)}} \Big(\int_{\Omega} u(x,t_0)^{r} \Phi_1 \,\ud x \Big)^{\frac{1}{r- q(p-1)}}.
\end{equation}
Moreover, if $p \in \big(1, \frac{n+1}{(n-1)_+}\big)$ and $r=p$, then for $\varepsilon T^* \leq t  \leq T^*$ with $\varepsilon \in (0,1)$, we have
\begin{equation}\label{equ;u-L-infty-estimate}
\|u(\cdot, t)\|_{L^{\infty}(\Omega)} 
\leq C_6(T^*-t)^{1/(p-1)}.
\end{equation}
Here, $C_5$ is the same constant as in Theorem \ref{thm:weighted-smoothing} and $C_6=C_6(n, \Omega, p, \varepsilon)>0$.
\end{cor}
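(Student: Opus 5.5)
The first estimate \eqref{equ;u-L-infty-estimate1} follows from Theorem \ref{thm:weighted-smoothing} combined with the monotonicity in Lemma \ref{lem:monotonicity}. The second estimate \eqref{equ;u-L-infty-estimate} then follows by choosing $t_0$ at a fixed fraction of the distance to $T^*$ and controlling the initial weighted norm through Corollary \ref{cor:extinction-time}.

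For \eqref{equ;u-L-infty-estimate1}, apply \eqref{equ:L_infty-to-Lr} on $[t_0,t]$. Since $r\geq p$ in the range \eqref{eq:initial_datum_range_2}, Lemma \ref{lem:monotonicity} is available, and for every $\tau\in(t_0,t)$ it gives
\[\int_\Omega u(x,\tau)^r\Phi_1\,\ud x\leq \int_\Omega u(x,t_0)^r\Phi_1\,\ud x .\]
Integrating in $\tau$ bounds the space-time integral by $(t-t_0)\int_\Omega u(x,t_0)^r\Phi_1\,\ud x$. The factor $(t-t_0)$ then combines with $(t-t_0)^{-\frac{q+1}{r-q(p-1)}}$ and yields exactly the exponent $-\frac{q}{r-q(p-1)}$, with the same constant $C_5$. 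One small point must be checked: when $r>\frac{(n+1)(p-1)}{2}$ but possibly $r<p$, Lemma \ref{lem:monotonicity} requires $r\geq p$. This is automatic in the second line of \eqref{eq:initial_datum_range_2}, because $p\geq\frac{n+1}{n-1}$ gives $\frac{(n+1)(p-1)}{2}\geq p$. For $n=1$ only $r\geq p$ is allowed, so nothing further is needed. Taking $t_0=0$ gives Theorem \ref{thm:weighted-smoothing_1}.

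For \eqref{equ;u-L-infty-estimate}, take $r=p$ and $q=\frac{n+1}{2}$, or the fixed $q$ when $n=1$. The condition $p<\frac{n+1}{(n-1)_+}$ is exactly $p>q(p-1)$, so the exponent $p-q(p-1)$ is positive. Given $t\in[\varepsilon T^*,T^*)$, set
\[t_0:=t-\varepsilon(T^*-t)\geq 0 .\]
This is admissible because $t-\varepsilon(T^*-t)\geq \varepsilon T^*-\varepsilon T^*+\varepsilon t\geq0$. Corollary \ref{cor:extinction-time}, in its first inequality, gives
\[\|u(\cdot,t_0)\|_{L^p_{\Phi_1}}^{p}\leq C\,(T^*-t_0)^{p/(p-1)}=C\,(1+\varepsilon)^{p/(p-1)}(T^*-t)^{p/(p-1)} .\]
Inserting this into \eqref{equ;u-L-infty-estimate1} with $t-t_0=\varepsilon(T^*-t)$ gives a bound $C(T^*-t)^{\beta}$, where
\[\beta=\frac{-q+\frac{p}{p-1}}{p-q(p-1)}=\frac{p-q(p-1)}{(p-1)(p-q(p-1))}=\frac1{p-1}.\]
The case $t=T^*$ is trivial, since $u$ vanishes there.

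The main obstacle is essentially bookkeeping rather than analysis. One has to make sure that $t_0$ stays nonnegative; the choice proportional to $T^*-t$ handles this, which is why $\varepsilon$ enters $C_6$. One also has to verify that the monotonicity lemma applies to the limit solution. This follows by passing to the limit from the truncated data, exactly as in the proof of Lemma \ref{lem:monotonicity}.
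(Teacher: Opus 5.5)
Your proof is correct and follows the paper's argument. The first estimate comes from Theorem \ref{thm:weighted-smoothing} combined with the $L^r_{\Phi_1}$ monotonicity, and the second from the $r=p$ case of \eqref{equ;u-L-infty-estimate1} together with the upper bound in \eqref{eq:monotonicity_norm_p_II}. The only difference is that your single choice $t_0=t-\varepsilon(T^*-t)$ replaces the paper's two cases ($t_0=t/2$ for $t\le T^*/2$, and $t_0=2t-T^*$ for $t\ge T^*/2$); this removes the case split and changes nothing of substance.
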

\begin{proof}
The estimate \eqref{equ;u-L-infty-estimate1} follows from Theorem \ref{thm:weighted-smoothing} and Lemma \ref{lem:monotonicity}.
We divide the proof of \eqref{equ;u-L-infty-estimate} into two cases. If $\varepsilon T^*\leq t\leq T^*/2$, apply \eqref{equ;u-L-infty-estimate1} with $r=p$ and $t_0=t/2$. Using \eqref{eq:monotonicity_norm_p_II}, we obtain
\begin{align*}
\|u(\cdot,t)\|_{L^\infty(\Omega)}
&\leq C(t/2)^{-q/(p-q(p-1))}
\Big(\int_\Omega u(x,t/2)^p\Phi_1\,\ud x\Big)^{1/(p-q(p-1))}\\
&\leq C t^{-q/(p-q(p-1))}(T^*-t/2)^{\frac{p}{(p-1)(p-q(p-1))}}
\leq C(\varepsilon)(T^*-t)^{1/(p-1)}.
\end{align*}
If $T^*/2\leq t<T^*$, take $t_0=2t-T^*$. Then $t-t_0=T^*-t$ and $T^*-t_0=2(T^*-t)$. By \eqref{equ;u-L-infty-estimate1} and \eqref{eq:monotonicity_norm_p_II}, we have
\begin{align*}
\|u(\cdot,t)\|_{L^\infty(\Omega)}
&\leq C(T^*-t)^{-q/(p-q(p-1))}
\Big(\int_\Omega u(x,t_0)^p\Phi_1\,\ud x\Big)^{1/(p-q(p-1))}\\
&\leq C(T^*-t)^{-q/(p-q(p-1))}
(T^*-t_0)^{\frac{p}{(p-1)(p-q(p-1))}}
\leq C(T^*-t)^{1/(p-1)}.
\end{align*}
This completes the proof.
\end{proof}

{
\setlength{\emergencystretch}{2em}
\subsection{A counterexample}\label{subsec:example}
{
In this subsection, we construct boundary-singular separable solutions
that demonstrate the sharpness of the weighted smoothing range.  In the
natural case $r=p$, it follows from the construction that the
Brezis--Turner exponent
\[
p_{\mathrm{BT}}:=\frac{n+1}{n-1}
\]
is the critical exponent: instantaneous $L^\infty$ smoothing fails at
$p=p_{\mathrm{BT}}$.  For $p$ in a right neighborhood of
$p_{\mathrm{BT}}$, it follows from the same construction that the condition
\[
r>r_c:=\frac{(n+1)(p-1)}2
\]
in Theorem \ref{thm:weighted-smoothing_1} cannot be weakened by
decreasing the lower bound $r_c$.

Let $n \geq 2$, $p>1$, and let $\Omega$ be a smooth bounded domain in $\mathbb{R}^n$ with $0 \in \partial \Omega$.
We begin with a positive boundary-singular solution
$S\in C^2(\Omega)\cap C(\overline{\Omega}\setminus\{0\})$ of
\begin{equation}\label{eq:elliptic_equation_singular}
\begin{cases}
-\Delta S = S^p \quad &\text{in}~\Omega,\\
S>0 \quad &\text{in}~\Omega,\\
S=0 \quad &\text{on}~\partial \Omega \setminus \{0\}.
\end{cases}
\end{equation}
By Theorem 1.1 and Propositions 1.1--1.2 of
del Pino--Musso--Pacard \cite{DMP07}, after
decreasing $p_n$ if necessary, there exists
\[
p_n\in\left(p_{\mathrm{BT}},\frac{n+2}{(n-2)_+}\right)
\]
such that \eqref{eq:elliptic_equation_singular} admits such a solution
for every $p\in[p_{\mathrm{BT}},p_n)$.  By the asymptotic description for
$p>p_{\mathrm{BT}}$ in \cite{DMP07}, the endpoint asymptotics in
Theorems 1.2--1.3 of Bidaut-V\'eron--Ponce--V\'eron \cite{BPV},
the boundary Schauder estimates, and the Hopf lemma, there exists
$0<\rho_0<e^{-1}$ such that
\begin{equation}\label{eq:relative-singular-profile}
S(x)\asymp
\begin{cases}
d(x)|x|^{-1-\frac2{p-1}},
&p_{\mathrm{BT}}<p<p_n,\\[1mm]
d(x)|x|^{-n}\left(\log\frac1{|x|}\right)^{-\frac{n-1}{2}},
&p=p_{\mathrm{BT}},
\end{cases}
\end{equation}
for $x\in\Omega$ with $0<|x|<\rho_0$.
}
Let $T^*>0$ and set
$$
{U(x,t)=\Big(\frac{p-1}{p}\Big)^{\frac{1}{p-1}}
\big(T^*-t\big)_+^{\frac{1}{p-1}} S(x).}
$$
{
Then $U$ solves \eqref{eq:Cauchy-Dirichlet} away from the singular
boundary point and extinguishes at time $T^*$.  We consider the
following two parameter regimes:
\[
p_{\mathrm{BT}}<p<p_n,\qquad
p\leq r<r_c:=\frac{(n+1)(p-1)}2,
\]
and the endpoint case $p=r=p_{\mathrm{BT}}=r_c$.  In the first regime,
by \eqref{eq:relative-singular-profile}, we have
$S\in L^r_{\Phi_1}(\Omega)$ for $r<r_c$.  At the endpoint,
\[
\int_0^{\rho_0}
\frac{\ud\rho}
{\rho\bigl(\log(1/\rho)\bigr)^{(n+1)/2}}<\infty,
\]
so $S\in L^{p_{\mathrm{BT}}}_{\Phi_1}(\Omega)$ also in the endpoint
case.  Consequently, in either regime,
$U(\cdot,t)\in L^r_{\Phi_1}(\Omega)$ for every $t\in[0,T^*]$.
}

{It remains to verify that this boundary-singular solution
is the limit solution generated by its initial value.  Since $r\geq p$
in both regimes, $U_0:=U(\cdot,0)$ belongs to
$L^p_{\Phi_1}(\Omega)$.  By Theorem \ref{thm:limit_sol}, there exists
a unique limit solution
$V\in C([0,\infty);L^p_{\Phi_1}(\Omega))$ to
\eqref{eq:Cauchy-Dirichlet} with initial value $U_0$.}
{Choose $\{u_{0,\ell}\}\subset C_c^\infty(\Omega)$ such that
$0\leq u_{0,\ell}\leq\min\{\ell,U_0\}$ and
$u_{0,\ell}\uparrow U_0$ a.e. in $\Omega$.  By monotone convergence,
$u_{0,\ell}^p\to U_0^p$ in $L^1_{\Phi_1}(\Omega)$.  Let $u_\ell$
denote the bounded-data solution with initial value $u_{0,\ell}$.
By monotone approximation and the uniqueness in Theorem
\ref{thm:limit_sol},}
\begin{equation*}
{V(x,t) = \lim_{\ell \to \infty} u_{\ell}(x,t)}
\quad\text{for a.e. }(x,t)\in\Omega\times(0,\infty).
\end{equation*}
{In addition, by the weighted contraction principle in
Theorem \ref{thm:limit_sol}, for every $t\geq0$ we have
\begin{equation*}
0\leq\int_\Omega\bigl(V^p(x,t)-u_\ell^p(x,t)\bigr)\Phi_1(x)\,\ud x
\leq\int_\Omega\bigl(U_0^p-u_{0,\ell}^p\bigr)\Phi_1\,\ud x
\longrightarrow0.
\end{equation*}
}

{We next prove that $V=U$ on
$\Omega\times(0,T^*)$.  Fix $\tau\in(0,T^*)$.  By the regularity
theory in \cite{JX23, JX25},
we have $u_{\ell}(\cdot,t)\in C^2(\overline\Omega)$ for every
$t\in(0,\tau]$.  Fix $\ell\in\mathbb N$, and let
$S_1\in C^2(\overline\Omega)$ be a regular positive solution of
\eqref{eq:elliptic-equation}.  Such a solution exists by the standard
variational theory because $p<p_n<(n+2)/(n-2)_+$.  Since
$S_1\asymp d(\cdot,\partial\Omega)$ near the boundary, there exists
$m\geq1$, depending on $\ell$, such that
$u_{0,\ell}\leq U_m(\cdot,0)$, where}
$$U_m(x,t) = m\Big(\frac{p-1}{p}\Big)^{\frac{1}{p-1}}(T^*-m^{1-p}t)^{\frac{1}{p-1}} S_1(x).$$
{By the comparison principle,
$u_\ell\leq U_m$ on $\Omega\times[0,\tau]$.  Moreover, since
$S_1\asymp d(\cdot,\partial\Omega)$, there exists a constant
$C_{\ell,\tau}>0$ such that
\begin{equation*}
u_{\ell}(x,t) \leq U_m(x,t) \leq C_{\ell,\tau}
\operatorname{dist}(x, \partial \Omega)
\quad\text{for all }(x,t)\in\Omega\times[0,\tau].
\end{equation*}
}
{On the other hand, it follows from the lower bound in
\eqref{eq:relative-singular-profile} and
$\tau<T^*$ that $U(x,t)/d(x,\partial\Omega)\to\infty$ as
$x\to0$, uniformly for $t\in[0,\tau]$.  Hence there exists
$\rho_{\ell,\tau}>0$ such that}
\begin{equation*}
u_{\ell}(x,t) \leq U(x,t)
\quad\text{for all }(x,t)\in
(\Omega\cap B_{\rho_{\ell,\tau}})\times[0,\tau].
\end{equation*}
{Fix $\rho\in(0,\rho_{\ell,\tau})$. On the punctured domain
$\Omega\setminus\overline{B_\rho}$, we have
$u_{0,\ell}\leq U(\cdot,0)$; both functions vanish on
$\partial\Omega\setminus B_\rho$; and, by the preceding estimate,
$u_\ell\leq U$ on the artificial boundary
$\Omega\cap\partial B_\rho$ for all $t\in[0,\tau]$. By the comparison
principle, $u_\ell\leq U$ on
$(\Omega\setminus\overline{B_\rho})\times[0,\tau]$.  Combining this
with the inequality already established in
$(\Omega\cap B_\rho)\times[0,\tau]$, we obtain
\[
u_{\ell}(x,t) \leq U(x,t),
\qquad \forall (x,t)\in\Omega\times[0,\tau].
\]
}
{To pass from $u_\ell\leq U$ to the limit, set}
$$w_{\ell}(x,t) = U(x,t) - u_{\ell}(x,t),\quad F_{\ell}(x,t) = U^p(x,t) - u_{\ell}^p(x,t).$$
{Then $w_\ell,F_\ell\geq0$ and}
\begin{equation}\label{eq:difference_ell}
{
\begin{cases}
\partial_t F_{\ell} = \Delta w_{\ell} & \text{in } \Omega  \times (0,\tau),\\
w_{\ell} = 0 & \text{on }(\partial\Omega\setminus\{0\})\times (0,\tau).
\end{cases}
}
\end{equation}
{Fix $\rho\in(0,\rho_{\ell,\tau})$ and choose
$\chi_\rho\in C^\infty(\mathbb R^n)$ such that}
\begin{equation*}
\chi_{\rho} = 0 \text{ in } \overline{B_{\rho/2}(0)},~~ \chi_{\rho} = 1 \text{ in } \mathbb{R}^n \setminus B_{\rho}(0),\quad |\nabla \chi_{\rho}| \leq C/\rho,~|\Delta \chi_{\rho}| \leq C/\rho^2.  
\end{equation*}
{Test \eqref{eq:difference_ell} with
$\psi:=\chi_\rho\Phi_1$ on
$\Omega_\rho:=\Omega\setminus\overline{B_{\rho/2}}$.  Integrating by
parts, we obtain}
\begin{equation}\label{eq:energy_dif_ieq}
\begin{split}
\frac{\ud }{\ud t} \int_{\Omega_{\rho}} F_{\ell}\psi \,\ud x
& =   \int_{\Omega_{\rho}} \Delta w_{\ell}\psi \,\ud x \\
& =  \int_{\Omega_{\rho}} w_{\ell}\Delta \psi \,\ud x - \int_{\partial\Omega_{\rho}} w_{\ell}\partial_{\nu}\psi \cdot \ud S + \int_{\partial\Omega_{\rho}} \psi \partial_{\nu} w_{\ell} \cdot \ud S \\
& =: \, I_1 + I_2 + I_3.
\end{split}
\end{equation}
{Since $\psi=0$ on $\partial\Omega_\rho$, we have $I_3=0$.
Moreover, $w_\ell=0$ on $\partial\Omega\setminus\{0\}$.  To estimate
the remaining boundary contribution, define}
{
\[
\varepsilon_p(\rho):=
\begin{cases}
\rho^{\,n-1-\frac2{p-1}},&p_{\mathrm{BT}}<p<p_n,\\[1mm]
\bigl(\log(1/\rho)\bigr)^{-\frac{n-1}{2}},&p=p_{\mathrm{BT}}.
\end{cases}
\]
In either case $\varepsilon_p(\rho)\to0$ as $\rho\downarrow0$.
By the upper bound in \eqref{eq:relative-singular-profile}, we have}
\begin{equation*}
{
\begin{aligned}
|I_2| = &\, \Big| \int_{\Omega \cap \partial B_{\rho/2}} w_{\ell}\partial_{\nu}\psi \cdot \ud S \Big|
= \Big| \int_{\Omega \cap \partial B_{\rho/2}} w_{\ell}\Phi_1\partial_{\nu}\chi_{\rho} \cdot \ud S \Big| \\
\leq &\,C\varepsilon_p(\rho)\longrightarrow0
\qquad\text{as }\rho\downarrow0,
\end{aligned}
}
\end{equation*}
{where the estimate is uniform for time in $[0,\tau]$.}
{By the same estimates on the annulus
$B_\rho\setminus B_{\rho/2}$, the two
terms containing $\nabla\chi_\rho$ or $\Delta\chi_\rho$ are
$O(\varepsilon_p(\rho))$, uniformly for time in $[0,\tau]$; the
dominated convergence theorem applies to the remaining term.  Hence}
\begin{equation*}
{
\begin{aligned}
I_1 = \int_{\Omega_{\rho}} w_{\ell}\Delta \psi \,\ud x
= & \int_{\Omega_{\rho}} w_{\ell}\chi_{\rho} \Delta \Phi_1 \,\ud x + \int_{\Omega_{\rho}} w_{\ell}\Phi_1 \Delta \chi_{\rho} \,\ud x + 2\int_{\Omega_{\rho}} w_{\ell}\nabla \chi_{\rho} \cdot \nabla \Phi_1 \,\ud x  \\
= & -\lambda_1 \int_{\Omega_{\rho}} w_{\ell}\chi_{\rho} \Phi_1 \,\ud x + O(\varepsilon_p(\rho))
\rightarrow  -\lambda_1 \int_{\Omega} w_{\ell} \Phi_1 \,\ud x
\end{aligned}
}
\end{equation*}
as $\rho \rightarrow 0$.
{We first integrate \eqref{eq:energy_dif_ieq} over
$(0,t)$ and then let $\rho\downarrow0$.  By the uniform estimates above
and dominated convergence, for every $t\in(0,\tau]$ we have}
\begin{equation*}
\int_{\Omega} (U^p(x,t)-u_{\ell}^p(x,t))\Phi_1 \,\ud x + \lambda_1 \int_{0}^{t}\int_{\Omega} w_{\ell} \Phi_1 \,\ud x = \int_{\Omega} (U^p(x,0)-u_{0,\ell}^p(x))\Phi_1 \,\ud x.
\end{equation*}
{Since $w_\ell\geq0$ and
$u_{0,\ell}^p\to U_0^p$ in $L^1_{\Phi_1}(\Omega)$, we may drop the
nonnegative second term and let $\ell\to\infty$.  Using
$u_\ell\uparrow V$, we obtain, for every $t\in(0,\tau]$,}
\begin{equation*}
{\int_{\Omega} (U^p(x,t)-V^p(x,t))\Phi_1 \,\ud x
\leq \lim_{\ell\to\infty}\int_{\Omega}
(U^p(x,0)-u_{0,\ell}^p(x))\Phi_1 \,\ud x=0,}
\end{equation*}
{
where the integrand on the left is nonnegative because
$u_\ell\leq U$ and hence $V\leq U$.  Therefore
$V(\cdot,t)=U(\cdot,t)$ a.e. in $\Omega$ for every $t\in(0,\tau]$.
Since $\tau<T^*$ was arbitrary, $V=U$ throughout
$\Omega\times(0,T^*)$.  Furthermore,
$V(\cdot,T^*)=0$ by continuity in $L^p_{\Phi_1}(\Omega)$, and
by uniqueness with zero data at time $T^*$, we have $V\equiv0$ for
$t\geq T^*$.  Thus the separable solution $U$ is precisely the limit
solution generated by $U_0$.  Since $S$ is unbounded,
$U(\cdot,t)$ is unbounded for every $0<t<T^*$.

Consequently, for every $p\in(p_{\mathrm{BT}},p_n)$ and every
$r\in[p,r_c)$, there is an initial datum in
$L^r_{\Phi_1}(\Omega)$ whose limit solution remains unbounded before
extinction.  At $p=p_{\mathrm{BT}}$, the same conclusion follows from
the logarithmic profile for $r=p=r_c$.  Hence, in the natural scale $r=p$,
$p<p_{\mathrm{BT}}$ is the sharp range for the
$L^p_{\Phi_1}$--$L^\infty$ smoothing effect: smoothing already fails
at the critical exponent $p=p_{\mathrm{BT}}$.  For
$p\in(p_{\mathrm{BT}},p_n)$, it follows from the counterexamples with
$p\leq r<r_c$ that the lower bound $r_c$ in
Theorem \ref{thm:weighted-smoothing_1} cannot be decreased.  This
construction does not address the borderline case $r=r_c$ for
$p>p_{\mathrm{BT}}$.

\begin{rem}
The restriction \(p<p_n\) above comes only from the currently available
elliptic existence result of del Pino--Musso--Pacard \cite{DMP07}.
As observed in Open Problem~1 of \cite{DMP07}, an affirmative answer
would extend their bounded-domain construction to every
\[
\frac{n+1}{n-1}<p<\frac{n+2}{(n-2)_+}.
\]
With these elliptic profiles, the preceding argument applies without
change, and the same counterexamples are obtained for every
\(p\leq r<\frac{(n+1)(p-1)}2\) throughout this range.
\end{rem}
}
}

\section{Global Harnack Inequalities}\label{sec:Harnack}

In this section, we revisit the global Harnack inequalities for limit solutions to \eqref{eq:Cauchy-Dirichlet} and derive sharper versions.

\begin{lem}
\label{lem:almost-representation}
Suppose that $u$ is a bounded weak solution to \eqref{eq:Cauchy-Dirichlet}.
Then, for a.e. $x_0 \in \Omega$ and $0 < t_0 < t_1 < \infty$,
\begin{equation}
\frac{u(x_0, t_1)}{t_1^{1/(p-1)}} 
\leq \frac{p}{p-1}\int_{\Omega} \frac{u(x,t_1)^p -  u(x,t_0)^p}{t_0^{p/(p-1)} - t_1^{p/(p-1)}} G_{\Omega}(x,x_0) \,\ud x
\leq \frac{u(x_0, t_0)}{t_0^{1/(p-1)}}, 
\end{equation}
where $G_{\Omega}$ denotes the Green function of $-\Delta$ with zero Dirichlet boundary conditions in $\Omega$.
\end{lem}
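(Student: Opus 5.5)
The plan is to combine the Bénilan--Crandall type time monotonicity coming from the homogeneity of the equation with the Green representation of the elliptic identity $-\Delta u=-\partial_t u^p$ at each fixed time.

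\textbf{Step 1: scaling and monotonicity.} For $\lambda>0$, the function $u_\lambda(x,t):=\lambda^{-1/(p-1)}u(x,\lambda t)$ is again a solution of \eqref{eq:Cauchy-Dirichlet}. Indeed,
\[
\partial_t u_\lambda^p=\lambda^{1-\frac{p}{p-1}}(\partial_t u^p)(x,\lambda t)=\lambda^{-\frac1{p-1}}(\Delta u)(x,\lambda t)=\Delta u_\lambda .
\]
For $\lambda\ge1$ its initial datum satisfies $\lambda^{-1/(p-1)}u_0\le u_0$. The comparison principle therefore gives $u_\lambda\le u$. Writing $\lambda=t'/t$ with $t'\ge t$, this says that $t\mapsto t^{-1/(p-1)}u(x,t)$ is nonincreasing on $(0,\infty)$. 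In particular, for $t\in[t_0,t_1]$,
\[
t^{\frac1{p-1}}\frac{u(x_0,t_1)}{t_1^{1/(p-1)}}\le u(x_0,t)\le t^{\frac1{p-1}}\frac{u(x_0,t_0)}{t_0^{1/(p-1)}}.
\]

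\textbf{Step 2: representation formula.} Since $u$ is bounded, the regularity results of \cite{JX23,JX25} give $u(\cdot,t)\in C^2(\overline\Omega)$ for $t>0$, with $u=0$ on $\partial\Omega$ and $\partial_t u^p=\Delta u$ continuous up to the boundary. Green's representation then yields, for each $t>0$,
\[
u(x_0,t)=-\int_\Omega G_\Omega(x,x_0)\,\partial_t u^p(x,t)\,\ud x .
\]
Integrating over $[t_0,t_1]$ and using Fubini gives
\[
\int_{t_0}^{t_1}u(x_0,t)\,\ud t=\int_\Omega\bigl(u(x,t_0)^p-u(x,t_1)^p\bigr)G_\Omega(x,x_0)\,\ud x .
\]
Fubini is justified because $G_\Omega(\cdot,x_0)\in L^1$ and $\partial_t u^p$ is bounded on $\overline\Omega\times[t_0,t_1]$.

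\textbf{Step 3: combine.} Integrate the two-sided bound of Step 1 over $[t_0,t_1]$, using
\[
\int_{t_0}^{t_1}t^{1/(p-1)}\,\ud t=\frac{p-1}{p}\bigl(t_1^{p/(p-1)}-t_0^{p/(p-1)}\bigr).
\]
Dividing by this positive quantity and rewriting the resulting fraction with both numerator and denominator negated produces exactly the stated chain of inequalities.

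\textbf{Main obstacle.} The delicate point is justifying Step 2 for a general bounded weak solution rather than a classical one. If boundary regularity is only available away from $t=0$, this causes no difficulty, because $t_0>0$. Should the pointwise representation be problematic, I would instead test the weak formulation \eqref{eq:weak-sol} with $\varphi=G_\varepsilon(\cdot,x_0)$, where $G_\varepsilon$ is the Green potential of a mollified Dirac mass at $x_0$. Letting $\varepsilon\to0$ then gives the identity at Lebesgue points $x_0$, which accounts for the ``a.e.\ $x_0$'' in the statement. The comparison argument in Step 1 is standard for bounded data.
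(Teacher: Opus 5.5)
Your proposal is correct and follows essentially the paper's argument. The paper tests the weak formulation with the Green potential $\phi_k$ of a mollified Dirac mass at $x_0$ (your fallback) to obtain $\int_\Omega (u(x,t_1)^p-u(x,t_0)^p)G_\Omega(x,x_0)\,\ud x=-\int_{t_0}^{t_1}u(x_0,\tau)\,\ud\tau$ for a.e.\ $x_0$, and then integrates the monotonicity of $t\mapsto t^{-1/(p-1)}u(x_0,t)$ over $[t_0,t_1]$ exactly as in your Step 3. The differences are cosmetic: you prove that monotonicity directly from the scaling $u_\lambda$ and comparison rather than citing the B\'enilan--Crandall estimate, and your primary Step 2 uses the pointwise Green representation coming from the $C^2(\overline\Omega)$ regularity of \cite{JX23,JX25}, which the paper also invokes in Lemma~\ref{lem:energy_estimate} and which gives the identity for every $x_0$.
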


This formula was established for bounded gradient flow solutions to nonlocal porous medium equations in Bonforte-V\'azquez \cite{BV15} and to nonlocal fast diffusion equations in Bonforte-Ibarrondo-Ispizua \cite{BII23}.
For completeness, we give a proof in the present setting.

\begin{proof}[Proof of Lemma \ref{lem:almost-representation}]
Let $\psi$ be the standard mollifier given by
\begin{equation*}
\psi(x) =
\begin{cases}
C \exp\big( \frac{1}{|x|^2-1} \big) & \text{if}~~|x| < 1, \\
0 & \text{if}~~|x| \geq 1,
\end{cases} 
\end{equation*}
where $C>0$ is a normalization constant such that $\Vert \psi \Vert_{L^1(\mathbb{R}^n)}= 1$.
Define a sequence of mollifiers $\{\psi_k\}$ by
$$\psi_k(x) = k^n \psi(kx)$$
and so $\psi_k \in C_c^{\infty}(\mathbb{R}^n)$ with $ \operatorname{supp} \psi_k \subset B_{{1}/{k}}(0).$ 
Recall the following property of the mollifiers $\{\psi_k\}$:
if $f \in L^1_{\mathrm{loc}}(\Omega)$, then
\[
\int_{\Omega} f(y)\,\psi_k(y-x) \,\ud y
\longrightarrow f(x)\quad\text{a.e. in }\Omega
\quad\text{as }k\to\infty.
\]
For $k \in \mathbb{N}$ and $x, x_0 \in \Omega$, set
$$\phi_k(x, x_0):= \int_{\Omega} G_{\Omega}(x,y)\,\psi_k(y - x_0) \,\ud y,$$
which is well-defined as $G_{\Omega}(x,\cdot) \in L^1(\Omega)$ for all $x \in \Omega$.
Indeed, $\phi_k$ is a solution to the following equation:
\begin{equation}\label{eq:possion}
\begin{cases}
-\Delta \phi_k(x) = \psi_k(x-x_0) \quad &\text{in~} \Omega,\\
\phi_k(x) = 0 \quad &\text{on~} \partial\Omega.
\end{cases}
\end{equation}
By the standard elliptic regularity theory, we have $\phi_k \in C_0(\Omega) \cap C^2(\overline{\Omega})$.
Choosing $\varphi=\phi_k$ in the weak formulation
\eqref{eq:weak-sol}, we obtain, for $0 \leq t_0 < t_1 < \infty$,
\begin{align*}
&\int_{\Omega} u(x,t_1)^p\,\phi_k(x, x_0) \,\ud x
- \int_{\Omega} u(x,t_0)^p\,\phi_k(x, x_0) \,\ud x \\
= & -\int_{t_0}^{t_1}\!\int_{\Omega} \nabla u(x,\tau) \cdot
\nabla_x \phi_k(x,x_0) \,\ud x \, \ud \tau \\
= & -\int_{t_0}^{t_1}\!\int_{\Omega}  u(x,\tau) \psi_k(x-x_0) \,\ud x \, \ud \tau.
\end{align*}
where we have used the weak formula of \eqref{eq:possion}.
By the approximation property of the mollifiers, we have
\begin{align*}
\int_{t_0}^{t_1}\!\int_{\Omega} u(x,\tau)\,\psi_k(x - x_0) \,\ud x \, \ud \tau 
\rightarrow \int_{t_0}^{t_1} u(x_0, \tau) \,\ud \tau, \quad \text{as~~}k \to \infty.
\end{align*}
Note that
\begin{align*}
\int_{\Omega}\Big(\int_{\Omega} u(x,t)^p G_{\Omega}(x,y) \,\ud x \Big) \,\ud y = & \int_{\Omega} u(x,t)^p \Big( \int_{\Omega} G_{\Omega}(x,y) \,\ud y \Big) \,\ud x \\
\leq & C \int_{\Omega} u(x,t)^p \Phi_1(x) \,\ud x < \infty,
\end{align*}
where we have used the Fubini-Tonelli theorem and the estimate $\|G_{\Omega}(x, \cdot)\|_{L^1(\Omega)} \le C\Phi_1(x)$ from Bonforte--Figalli--V\'azquez \cite{BFV18}.
By the Fubini--Tonelli theorem and the properties of mollifiers, for
any $t\geq0$ we have
\begin{align*}
\int_{\Omega} u(x,t)^p\,\phi_k(x, x_0) \,\ud x 
& =  \int_{\Omega} u(x,t)^p\int_{\Omega} G_{\Omega}(x,y)\,\psi_k(y - x_0) \,\ud y \,\ud x \\
&=  \int_{\Omega} \psi_k(y - x_0) \int_{\Omega} u(x,t)^p\,G_{\Omega}(x,y) \,\ud x \,\ud y \\
&\rightarrow  \int_{\Omega} u(x,t)^p\,G_{\Omega}(x,x_0) \,\ud x \quad \text{as~~}k \to \infty.
\end{align*}
Consequently, for a.e. $x_0 \in \Omega$ and
$0<t_0<t_1<\infty$, we have
\begin{equation*}
\int_{\Omega} \left(u(x,t_1)^p -  u(x,t_0)^p\right)G_{\Omega}(x,x_0) \,\ud x
= -\int_{t_0}^{t_1} u(x_0,\tau) \,\ud \tau.
\end{equation*}
Recall the B\'enilan-Crandall estimate \cite{BC81} (see also Section 3 in
Bonforte--Figalli \cite{BF24}) for bounded weak solutions to
\eqref{eq:Cauchy-Dirichlet}, i.e.,
$$ u_t \leq \frac{u}{(p-1)t},$$
from which it follows that $t \mapsto t^{-1/(p-1)} u(x,t)$ is nonincreasing for a.e. $x_0 \in \Omega$.
As a result, for $0< t_0 < \tau <t_1$ and a.e. $x_0 \in \Omega$,
\begin{align*}
\Big(\frac{\tau}{t_1}\Big)^{\frac{1}{p-1}}u(x_0,t_1) \leq u(x_0, \tau) \leq \Big(\frac{\tau}{t_0}\Big)^{\frac{1}{p-1}}u(x_0,t_0),
\end{align*}
and hence
\begin{align*}
\frac{p-1}{p}\big(t_1^{\frac{p}{p-1}}-t_0^{\frac{p}{p-1}}\big)\frac{u(x_0,t_1)}{t_1^{1/(p-1)}}= &\,\frac{u(x_0,t_1)}{t_1^{1/(p-1)}}\,\int_{t_0}^{t_1}\tau^{\frac{1}{p-1}}\,\ud\, \tau \leq \int_{t_0}^{t_1} u(x_0,\tau) \,\ud \tau, \\
\frac{p-1}{p}\big(t_1^{\frac{p}{p-1}}-t_0^{\frac{p}{p-1}}\big)\frac{u(x_0,t_0)}{t_0^{1/(p-1)}}= &\,\frac{u(x_0,t_0)}{t_0^{1/(p-1)}}\,\int_{t_0}^{t_1}\tau^{\frac{1}{p-1}}\,\ud\, \tau \geq \int_{t_0}^{t_1} u(x_0,\tau) \,\ud \tau.
\end{align*}
Therefore, for $0<t_0<t_1$ and a.e. $x_0\in\Omega$,
\begin{equation*}
\frac{u(x_0, t_1)}{t_1^{1/(p-1)}} 
\leq \frac{p}{p-1}\int_{\Omega} \frac{u(x,t_1)^p -  u(x,t_0)^p}{t_0^{p/(p-1)} - t_1^{p/(p-1)}} G_{\Omega}(x,x_0) \,\ud x
\leq \frac{u(x_0, t_0)}{t_0^{1/(p-1)}}.
\end{equation*}
\end{proof}

We first use Lemma \ref{lem:almost-representation} to show that
the lower boundary estimates of bounded weak solutions to
\eqref{eq:Cauchy-Dirichlet} are controlled by their lower bounds of
the $L^p_{\Phi_1}$ norms.
\begin{lem} \label{lm:global-lower-bounds-I}
Let $u$ be a bounded weak solution to \eqref{eq:Cauchy-Dirichlet}, and let $T^*$ be the extinction time.
Then for $0 < t < T^*$ and a.e. $x \in \Omega$,
\begin{equation}\label{equ;u/Phi1-intergral}
\frac{u(x,t)}{\Phi_1(x)}
\geq 
\frac{C \, t^{\frac{1}{p-1}}}{T^{*\frac{p}{p-1}} - t^{\frac{p}{p-1}}}
\int_{\Omega} u^p(y,t)\,\Phi_1(y)\,\ud y.
\end{equation}
where $C > 0$ is a constant depending only on $n$, $p$ and $\Omega$.
\end{lem}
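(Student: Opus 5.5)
We apply the first inequality of Lemma~\ref{lem:almost-representation} at the pair of times $t_0=t$ and $t_1\uparrow T^*$. Concretely, fix $0<t<t_1<T^*$ and use the right-hand inequality of Lemma~\ref{lem:almost-representation} with $(t_0,t_1)$ replaced by $(t,t_1)$. Multiplying through gives, for a.e. $x\in\Omega$,
\[
\frac{u(x,t)}{t^{1/(p-1)}}\geq \frac{p}{p-1}\,\frac{1}{t_1^{p/(p-1)}-t^{p/(p-1)}}\int_\Omega \bigl(u(y,t)^p-u(y,t_1)^p\bigr)G_\Omega(y,x)\,\ud y .
\]
We then let $t_1\uparrow T^*$. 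Since $u$ is bounded and $u(\cdot,t_1)\to 0$ as $t_1\to T^*$ (extinction; one can use continuity of $u^p$ in $L^1_{\Phi_1}$ from Theorem~\ref{thm:limit_sol} together with boundedness), we have $\int_\Omega u(y,t_1)^pG_\Omega(y,x)\,\ud y\to0$. Here we use dominated convergence with $G_\Omega(\cdot,x)\in L^1(\Omega)$ and $u$ uniformly bounded on $[t,T^*]$ (by the comparison/monotonicity in time of $\|u\|_\infty$). This yields
\[
u(x,t)\geq \frac{p}{p-1}\,\frac{t^{1/(p-1)}}{T^{*\,p/(p-1)}-t^{p/(p-1)}}\int_\Omega u(y,t)^pG_\Omega(y,x)\,\ud y .
\]

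The remaining step, which is the only nontrivial one, is the classical lower bound on the Green function
\[
G_\Omega(y,x)\geq c\,\Phi_1(x)\Phi_1(y)\qquad\text{for all }x,y\in\Omega,
\]
with $c=c(n,\Omega)>0$. This holds for smooth bounded domains. It follows from the sharp two-sided Green function estimates (Zhao, Widman), $G_\Omega(x,y)\asymp |x-y|^{2-n}\min\{1,\,d(x)d(y)|x-y|^{-2}\}$ for $n\ge3$, with the usual logarithmic and linear modifications for $n=2,1$. Combined with \eqref{eq:hopf-estimate} and the boundedness of $\Omega$, the minimum is bounded below by $c\,d(x)d(y)$. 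Alternatively, we can cite the estimates collected in Bonforte--Figalli--V\'azquez \cite{BFV18}, already used in the proof of Lemma~\ref{lem:almost-representation}.

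Inserting the Green bound, dividing by $\Phi_1(x)>0$, and absorbing $p/(p-1)$ and $c$ into $C=C(n,p,\Omega)$ gives \eqref{equ;u/Phi1-intergral}. The main obstacle is justifying the limit $t_1\to T^*$ in the Green-weighted integral. The Green function bound itself is standard and we would simply quote it. If the convergence of $u(\cdot,t_1)$ to zero is delicate, we avoid the limit: because the left-hand side does not depend on $t_1$, we simply drop the nonnegative term $\int u(y,t_1)^pG_\Omega\,\ud y$ with the correct sign, using $-u(y,t_1)^p\leq0$. Tracking the inequality directions carefully, we then let $t_1\to T^*$ only in the explicit factor $(t_1^{p/(p-1)}-t^{p/(p-1)})^{-1}$, which is decreasing in $t_1$; this gives the bound directly.
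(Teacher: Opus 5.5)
Your main argument is the paper's proof. You apply the right-hand inequality of Lemma~\ref{lem:almost-representation} with $t_0=t$, let $t_1\uparrow T^*$ using $u(\cdot,t_1)\to0$, and insert the lower bound $G_\Omega(x,y)\geq c_0\Phi_1(x)\Phi_1(y)$. The paper cites Davies and Zhang for that bound; deriving it from the two-sided Zhao--Widman estimates is equally fine. Your justification of the limit via boundedness and convergence in measure (dominated convergence) is, if anything, more careful than the paper's one-line remark.

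The fallback in your last paragraph, however, is wrong as written. For $t_1<T^*$, the term $-\int_\Omega u(y,t_1)^pG_\Omega(y,x)\,\ud y$ sits on the smaller side of a lower bound for $u(x,t)\,t^{-1/(p-1)}$. Discarding it enlarges that side, so the resulting inequality does not follow, however carefully the signs are tracked. To avoid the limit in the integral, apply Lemma~\ref{lem:almost-representation} with $t_1>T^*$ instead; it holds for all $0<t_0<t_1<\infty$, and there $u(\cdot,t_1)\equiv0$ exactly. Then let $t_1\downarrow T^*$, along a countable sequence to keep the exceptional null set in $x$ under control, in the factor $(t_1^{p/(p-1)}-t^{p/(p-1)})^{-1}$. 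This factor increases to $(T^{*\,p/(p-1)}-t^{p/(p-1)})^{-1}$ and gives the claimed bound.
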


\begin{proof}
Apply Lemma \ref{lem:almost-representation} with $t_0=t$ and then let $t_1\uparrow T^*$. Since $u(\cdot,t_1)\to0$, for any $t \in (0,T^*)$ and a.e. $x\in\Omega$ we obtain
\[
\frac{u(x, t)}{t^{\frac{1}{p-1}}} \geq \frac{p}{p-1} \int_{\Omega} \frac{u^p(y, t)}{T^{*\frac{p}{p-1}} - t^{\frac{p}{p-1}}}\, G_{\Omega}(x, y) \,\ud y .
\]
Using the lower bound for the Green function due to Davies
\cite{D87} and Zhang \cite{Zh02},
$G_{\Omega}(x, y) \geq c_0 \Phi_1(x) \Phi_1(y)$, we obtain
\[
\frac{u(x, t)}{t^{\frac{1}{p-1}}} \geq \Phi_1(x)\,\frac{c_0 p}{p-1}\, \frac{1}{T^{*\frac{p}{p-1}} - t^{\frac{p}{p-1}}} \int_{\Omega} u^p(y, t)\,\Phi_1(y) \,\ud y .
\]
Rearranging the terms, we obtain \eqref{equ;u/Phi1-intergral}; the corresponding constant is $C=c_0p/(p-1)$.
\end{proof}

We now use Lemma \ref{lem:almost-representation} and Lemma \ref{lm:global-lower-bounds-I} to prove the main results in this section.
\begin{thm}\label{thm:bhi_gengral}
Suppose that $u$ is a limit solution to \eqref{eq:Cauchy-Dirichlet} with $u_0 \in L^{r}_{\Phi_1}(\Omega)$ and $r>0$ satisfying \eqref{eq:initial_datum_range_2}.
Let $T^*$ denote the extinction time. 
Then, for $0 < t < T^*$ and a.e. $x \in \Omega$,
\begin{equation}\label{eq:bdi_general}
     C_1\frac{ t^{\frac{1}{p-1}} T^{*a+b+1}}{T^{*\frac{p}{p-1}} - t^{\frac{p}{p-1}}} {B_{1-\frac{t}{T^*}}(a+1,b+1)}  \|u_0\|_{L^{r}_{\Phi_1}(\Omega)}^{-\mu}
    \leq  \frac{u(x, t)}{\Phi_1(x)} 
    \leq  C_2 \, t^{-c} \|u_0\|_{L^{r}_{\Phi_1}(\Omega)}^{\frac{pr}{r - q(p-1)}},
\end{equation}
where $C_1$ and $C_2$ are positive constants depending only on $n$, $p$, $r$, $\Omega$, and $q$. 
Here $q=(n+1)/2$ if $n\geq2$, while for $n=1$ we fix any
$q\in\bigl(1,r/(p-1)\bigr)$. Moreover,
$$
a := \frac{r+1}{p-1}, \qquad
b := \frac{r(q+1)}{r-q(p-1)}, \qquad
c := \frac{r+q}{r-q(p-1)}, \qquad
\mu := \frac{r(r+p-1)}{r-q(p-1)},
$$
{and $B_z(\alpha,\beta)$ denotes the incomplete Beta function,
\[
    B_z(\alpha,\beta) := \int_0^{z} s^{\alpha-1}(1-s)^{\beta-1} \, \ud s.
\]}
\end{thm}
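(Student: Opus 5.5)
The plan is to prove the two inequalities in \eqref{eq:bdi_general} separately. Both reduce to the almost representation formula of Lemma \ref{lem:almost-representation} combined with the weighted smoothing estimate \eqref{equ;u-L-infty-estimate1}. All computations are done first for the bounded approximations $u_j$ with data $\min\{u_0,j\}$, with constants independent of $j$, and then passed to the limit $u_j\uparrow u$ by monotonicity. I am confident of the route for the upper bound; for the lower bound the ODE route is fixed, but I have not yet found the norm inequality that produces exactly the stated $a$, $b$, $\mu$.

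\emph{Upper bound.} In Lemma \ref{lem:almost-representation} take $t_1=t$ and $t_0=t/2$. In the middle term, drop the nonpositive contribution of $u(\cdot,t_1)^p$ (the denominator $t_0^{p/(p-1)}-t_1^{p/(p-1)}$ is negative). This gives
\[
\frac{u(x,t)}{t^{1/(p-1)}}\le \frac{C}{t^{p/(p-1)}}\int_\Omega u(y,t/2)^p\,G_\Omega(x,y)\,\ud y
\le \frac{C}{t^{p/(p-1)}}\,\|u(\cdot,t/2)\|_{L^\infty(\Omega)}^p\,\Phi_1(x),
\]
where the last step uses $\|G_\Omega(x,\cdot)\|_{L^1(\Omega)}\le C\Phi_1(x)$. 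Then \eqref{equ;u-L-infty-estimate1} with $t_0=0$ and time $t/2$ bounds $\|u(\cdot,t/2)\|_\infty^p$ by $C t^{-pq/(r-q(p-1))}\|u_0\|_{L^r_{\Phi_1}}^{pr/(r-q(p-1))}$. Collecting powers of $t$ gives $-1-\frac{pq}{r-q(p-1)}=-\frac{r+q}{r-q(p-1)}=-c$, which is exactly the right-hand side of \eqref{eq:bdi_general}.

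\emph{Lower bound.} Lemma \ref{lm:global-lower-bounds-I} already produces the prefactor $t^{1/(p-1)}/(T^{*p/(p-1)}-t^{p/(p-1)})$. It therefore remains to bound $\int_\Omega u^p(\cdot,t)\Phi_1$ from below. From the proof of Lemma \ref{lem:monotonicity} (case $r=p$), $\frac{\ud}{\ud\tau}\int u^p\Phi_1=-\lambda_1\int u\Phi_1$. Since the solution vanishes at $T^*$, integrating gives
\[
\int_\Omega u^p(\cdot,t)\Phi_1\,\ud x=\lambda_1\int_t^{T^*}\!\!\int_\Omega u(\cdot,\tau)\Phi_1\,\ud x\,\ud\tau .
\]
The substitution $1-s=\tau/T^*$ turns $T^{*a+b+1}B_{1-t/T^*}(a+1,b+1)$ into $\int_t^{T^*}(T^*-\tau)^a\tau^b\,\ud\tau$. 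So the target is the pointwise-in-time bound
\[
\int_\Omega u(\cdot,\tau)\Phi_1\,\ud x\ \ge\ c\,(T^*-\tau)^{a}\,\tau^{b}\,\|u_0\|_{L^r_{\Phi_1}}^{-\mu}.
\]

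To get this, interpolate a higher weighted norm against the $L^\infty$ bound: $\int u^{m}\Phi_1\le\|u(\tau)\|_\infty^{m-1}\int u\Phi_1$, with $\|u(\tau)\|_\infty\lesssim\tau^{-q/(r-q(p-1))}\|u_0\|^{r/(r-q(p-1))}$ from \eqref{equ;u-L-infty-estimate1}. This yields the powers $\tau^{b}$ and $\|u_0\|^{-\mu}$ once the exponents are matched; note that $b$ and $\mu$ split as $r(q+1)=rq+r$ and $r(r+p-1)=r^2+r(p-1)$. The remaining ingredient is a lower bound near extinction of the form $\int u^{m}(\cdot,\tau)\Phi_1\gtrsim (T^*-\tau)^{m/(p-1)}$ for the relevant $m$ (roughly $m\approx r+1$, consistent with $a=(r+1)/(p-1)$). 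I would derive it as in Corollary \ref{cor:extinction-time}: differentiate $\int u^{m}\Phi_1$ using the computation in Lemma \ref{lem:monotonicity}, bound the dissipation terms by powers of $\|u\|_\infty$ and $\int u^m\Phi_1$, and integrate the resulting differential inequality backward from $T^*$, where the norm vanishes.

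This last step is the main obstacle: the exponent bookkeeping so that $a$, $b$, $\mu$ come out exactly as stated, and control of the gradient term in $\frac{\ud}{\ud\tau}\int u^m\Phi_1$. If the direct differential inequality for the gradient term fails, I would instead bootstrap Lemma \ref{lm:global-lower-bounds-I} itself. That is, insert the lower bound $u\gtrsim\Phi_1\int u^p\Phi_1$ into $\int u\Phi_1$, which gives an ODE inequality for $Y(\tau)=\int u^p(\cdot,\tau)\Phi_1$ of the form $-Y'\gtrsim(\text{explicit factor in }\tau)\,Y$, to be combined with the $L^\infty$ interpolation above.
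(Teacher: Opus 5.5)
\textbf{There is a genuine gap in the lower bound.}

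Your upper bound is correct and is the paper's argument: Lemma \ref{lem:almost-representation} with $t_1=t$, $t_0=t/2$, then $\|G_\Omega(x,\cdot)\|_{L^1}\le C\Phi_1(x)$, then smoothing at time $t/2$. Your reduction of the lower bound is also the paper's: the identity $\int_\Omega u^p(\cdot,t)\Phi_1=\lambda_1\int_t^{T^*}\!\int_\Omega u\Phi_1$, the Beta-function substitution, and Lemma \ref{lm:global-lower-bounds-I} reduce everything to
\[
\int_\Omega u(\cdot,\tau)\,\Phi_1\,\ud x\ \ge\ c\,(T^*-\tau)^{a}\,\tau^{b}\,\|u_0\|_{L^r_{\Phi_1}(\Omega)}^{-\mu}.
\]
That estimate is the whole content of the lower bound, and none of your routes proves it.

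\emph{(i) The interpolation cannot produce $b$ and $\mu$.} Combining $\int u^m\Phi_1\le\|u\|_\infty^{m-1}\int u\Phi_1$ with \eqref{equ;u-L-infty-estimate1} gives the factors $\tau^{q(m-1)/(r-q(p-1))}$ and $\|u_0\|^{-r(m-1)/(r-q(p-1))}$. Matching $b$ forces $m-1=r+r/q$, while matching $\mu$ forces $m-1=r+p-1$. These agree only if $r=q(p-1)$, which is excluded, so no choice of $m$ works.

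\emph{(ii) The weighted lower bound goes in the wrong direction.} Bounding the dissipation in $\frac{\ud}{\ud\tau}\int u^m\Phi_1$ from above and integrating back from $T^*$, as in Corollary \ref{cor:extinction-time}, yields an \emph{upper} bound on the norm near extinction. A lower bound needs the dissipation bounded \emph{below} by a power of the norm, which is a Sobolev inequality. The gradient term is the source of that estimate, not an obstacle.

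\emph{(iii) The fallback is vacuous.} From $-Y'\gtrsim f(\tau)Y$ you only get $Y(t)\ge Y(s)\exp(c\int_t^s f)$ for $s>t$. This says nothing without an independent lower bound on $Y$ near $T^*$, where $Y\to0$.

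\textbf{The missing idea} is to feed the upper bound you just proved into the lower bound. The paper writes $u\Phi_1=u^{r+1}/\bigl(u^{r-1}\cdot u/\Phi_1\bigr)$, so
\[
\int_\Omega u\,\Phi_1\,\ud x\ \ge\ \frac{\|u(\cdot,\tau)\|_{L^{r+1}(\Omega)}^{r+1}}{\|u(\cdot,\tau)\|_{L^\infty(\Omega)}^{r-1}\,\|u(\cdot,\tau)/\Phi_1\|_{L^\infty(\Omega)}}.
\]
It then uses three ingredients:
\begin{itemize}
\item the \emph{unweighted} extinction bound $\|u(\cdot,\tau)\|_{L^{r+1}(\Omega)}\ge C(T^*-\tau)^{1/(p-1)}$ (Bonforte--Figalli, Lemma 8.5; proved by the Sobolev argument of Lemma \ref{lm:norm_bounds_basic});
\item the smoothing bound for $\|u\|_\infty$;
\item the $u/\Phi_1$ upper bound.
\end{itemize}
Since $q(r-1)+(r+q)=r(q+1)$ and $r(r-1)+pr=r(r+p-1)$, the denominator is $\le C\tau^{-b}\|u_0\|^{\mu}$. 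This gives exactly $a=(r+1)/(p-1)$, $b$ and $\mu$. The split $b\sim rq+r$, $\mu\sim r^2+r(p-1)$ that you noticed is precisely the trace of replacing one factor $\|u\|_\infty$ by $\|u/\Phi_1\|_\infty$.

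\textbf{Salvaging your weighted route.} Apply Lemma \ref{lem:weighted-Poincare} to $u^{(r-p+2)/2}$. Since $r+1>p$ and $r+1\ge q(p-1)$, this gives $\int u^{r+1}\Phi_1\ge c(T^*-\tau)^{(r+1)/(p-1)}$. The resulting exponents are $b-\frac{r}{r-q(p-1)}$ and $\mu-\frac{r(p-1)}{r-q(p-1)}$. Recovering the stated bound from these requires an additional estimate $T^*\lesssim\|u_0\|_{L^r_{\Phi_1}}^{p-1}$.

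\textbf{Approximation step.} The lower bound depends on $T^*$, so passing to the limit in the truncated problems also requires their extinction times to converge to $T^*$. The paper verifies this.
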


\begin{proof}
Set $u_{0,k}:=\min\{u_0,k\}$, let $u_k$ be the corresponding bounded
solutions, and denote their extinction times by $T_k^*$.  By
the comparison principle and the definition of a limit solution,
$u_k\uparrow u$ a.e. and
$T_k^*\leq T_{k+1}^*\leq T^*$.  If
$\overline T:=\lim_kT_k^*<T^*$, then $u_k(\cdot,t)=0$ for every
$k$ and $t>\overline T$, and hence $u(\cdot,t)=0$, a contradiction.
Thus $T_k^*\uparrow T^*$.  Moreover,
$\|u_{0,k}-u_0\|_{L^r_{\Phi_1}(\Omega)}\to0$.  For each fixed
$t\in(0,T^*)$, we have $t<T_k^*$ for all sufficiently large $k$;
therefore it suffices to prove \eqref{eq:bdi_general} for bounded weak
solutions and then let $k\to\infty$.

We first prove the upper bound.  By Lemma
\ref{lem:almost-representation} and the estimate
$\|G_{\Omega}(x,\cdot)\|_{L^1(\Omega)}\leq C\Phi_1(x)$
from Bonforte--Figalli--V\'azquez \cite{BFV18}, for
$0<t_0<t_1<\infty$ we have
\begin{equation}\label{eq:up_bound}
\frac{u(x,t_1)}{\Phi_1(x)} 
\le \frac{C p}{p-1} \frac{t_1^{1/(p-1)}}{t_1^{p/(p-1)}-t_0^{p/(p-1)}} \|u(\cdot,t_0)\|_{L^\infty(\Omega)}^p .  
\end{equation}
Since the function $\tau \mapsto \tau^{p/(p-1)}$ is convex, we have
$t_1^{p/(p-1)}-t_0^{p/(p-1)} \ge (t_1-t_0) t_0^{1/(p-1)}$. Hence,
\[
\frac{u(x,t_1)}{\Phi_1(x)} 
\le \frac{C p}{p-1} \Bigl(\frac{t_1}{t_0}\Bigr)^{\!1/(p-1)} \frac{\|u(\cdot,t_0)\|_{L^\infty(\Omega)}^p}{\,t_1-t_0\,}.
\]
Fix $t \in (0,T^*)$ and choose $t_1=t$, $t_0=t/2$. Then $t_1/t_0=2$ and $t_1-t_0=t/2$.
By Theorem~\ref{thm:weighted-smoothing_1}, we have
\[
\|u(\cdot, t/2)\|_{L^\infty(\Omega)} \le C \, t^{-\frac{q}{\,r-q(p-1)\,}} \|u_0\|_{L^{r}_{\Phi_1}(\Omega)}^{\frac{r}{\,r-q(p-1)\,}} .
\]
Substituting this into the preceding inequality, we obtain
\begin{equation}\label{eq:bdi_general-left}
\frac{u(x,t)}{\Phi_1(x)} 
\le C \, t^{-1-\frac{pq}{r-q(p-1)}} \|u_0\|_{L^{r}_{\Phi_1}(\Omega)}^{\frac{pr}{r-q(p-1)}}
= C \, t^{-\frac{r+q}{r-q(p-1)}} \|u_0\|_{L^{r}_{\Phi_1}(\Omega)}^{\frac{pr}{r-q(p-1)}}, \end{equation}
which is exactly the right-hand side of \eqref{eq:bdi_general} (recall that $c=\frac{r+q}{r-q(p-1)}$).

We next prove the lower bound.
Note that for $t>0$,
\begin{align*}
 \frac{\ud}{\ud t} \int_{\Omega} u(x,t)^p\,\Phi_1(x)\, \ud x 
 = & -\lambda_1 \int_{\Omega} u(x,t)\,\Phi_1(x)\, \ud x \\
 = & -\lambda_1 \int_{\Omega} \frac{u(x,t)^{r+1}}{u(x,t)^{r-1}\, \big[\frac{u(x,t)}{\Phi_1(x)}\big]} \,\ud x \\
 \leq & -\lambda_1 \frac{\|u(\cdot, t)\|_{L^{r+1}(\Omega)}^{r+1}}{\|u(\cdot, t)\|_{L^{\infty}(\Omega)}^{r-1}\,\|\frac{u(\cdot,t)}{\Phi_1} \|_{L^{\infty}(\Omega)}}.
\end{align*}
By Lemma 8.5 in Bonforte--Figalli \cite{BF24}, there exists a constant $C>0$ depending only on $n,p,r$ and $\Omega$ such that
$$\|u(\cdot, t)\|_{L^{r+1}(\Omega)} \geq C (T^*-t)^{\frac{1}{p-1}}.$$ 
By Theorem~\ref{thm:weighted-smoothing_1} and \eqref{eq:bdi_general-left}, we obtain
\begin{equation}\label{eq:two_bounds}
    \|u(\cdot, t)\|_{L ^{\infty}(\Omega)}^{r-1}\,
    \Big\|\frac{u(\cdot,t)}{\Phi_1}\Big\|_{L^{\infty}(\Omega)}
    \leq
    C t^{-b}
    \|u_0\|_{L^{r}_{\Phi_1}(\Omega)}^{\mu} .
\end{equation}
Combining the preceding differential inequality with the last two
estimates, for $0<t<T^*$ we obtain
\begin{equation}\label{equ;d/dt1}
\begin{aligned}
    \frac{\ud}{\ud t}
    \int_\Omega u(x,t)^p \Phi_1(x)\,\ud x
    \leq
    -\, C (T^*-t)^{a}
    t^{b}
     \|u_0\|_{L^{r}_{\Phi_1}(\Omega)}^{-\mu} .
\end{aligned}
\end{equation}
Integrating \eqref{equ;d/dt1} over the time interval $(t,T^*)$, we obtain
\begin{align*}
\int_\Omega u(x,t)^p \Phi_1(x)\,\ud x
&\geq C\|u_0\|_{L^{r}_{\Phi_1}(\Omega)}^{-\mu}
\int_t^{T^*} (T^*-s)^a\,s^b\,\ud s .
\end{align*}
Setting $s=T^*\tau$ and then $\sigma=1-\tau$, we obtain
\begin{align*}
\int_t^{T^*} (T^*-s)^a\,s^b\,\ud s 
=T^{*a+b+1} \int_{t/T^*}^{1}\,(1-\tau)^a\,\tau^b\,\ud\tau  
{= T^{*a+b+1}\,B_{1-\frac{t}{T^*}}(a+1,b+1).}
\end{align*} 
Consequently, for $0<t<T^*$ we have
\begin{align*}
\int_\Omega u(x,t)^p\,\Phi_1(x)\,\ud x
\ge C\, T^{*a+b+1} \|u_0\|_{L^{r}_{\Phi_1}(\Omega)}^{-\mu} 
\,{B_{1-\frac{t}{T^*}}(a+1,b+1)}.
\end{align*}
By Lemma \ref{lm:global-lower-bounds-I} and the preceding inequality,
we obtain
\begin{align*}
\frac{u(x,t)}{\Phi_1(x)} \geq & \,\frac{C \, t^{\frac{1}{p-1}}}{T^{*\frac{p}{p-1}} - t^{\frac{p}{p-1}}}\int_{\Omega} u^p(y,t)\,\Phi_1(y)\,\ud y \\
\geq & \,C_1\frac{ t^{\frac{1}{p-1}} T^{*a+b+1}}{T^{*\frac{p}{p-1}} - t^{\frac{p}{p-1}}} {B_{1-\frac{t}{T^*}}(a+1,b+1)}  \|u_0\|_{L^{r}_{\Phi_1}(\Omega)}^{-\mu}.
\end{align*}
This completes the proof.
\end{proof} 

Finally, we establish a global Harnack inequality near the extinction time $T^*$ for $p \in \big(1, \frac{n+1}{(n-1)_+}\big)$.
A similar result was proved in Theorem 8.20 of Bonforte--Figalli
\cite{BF24} for the more restrictive range
$p \in (1, \frac{n}{n-1})$ with $n \geq 3$.
\begin{thm}\label{thm:bhi_special}
Let $p \in \big(1, \frac{n+1}{(n-1)_+}\big)$. 
Assume that $u$ is a limit solution to \eqref{eq:Cauchy-Dirichlet} with  $u_0 \in L^p_{\Phi_1}(\Omega)$. 
Then there exist constants $C_i>0$, $i=1,2$, depending only on $n$, $p$, and $\Omega$, such that for  $t \in (2T^*/3, T^*)$,
\begin{equation}\label{equ;u/Phi-2}
C_1 \, (T^* - t)^{\frac{1}{p-1}} \leq \frac{u(x, t)}{\Phi_1(x)} \leq C_2 \, (T^* - t)^{\frac{1}{p-1}},\quad \forall x \in \Omega.
\end{equation}
\end{thm}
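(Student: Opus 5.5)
We prove the bound first for bounded solutions and then pass to the limit. Let $u_{0,k}=\min\{u_0,k\}$ and let $u_k$ be the corresponding solutions, with extinction times $T_k^*\uparrow T^*$, as in the proof of Theorem \ref{thm:bhi_gengral}. The constants below will depend only on $n,p,\Omega$; the dependence on $T^*$ will be removed through the two-sided bound of Corollary \ref{cor:extinction-time}. For this reason we work throughout with the bounded weak solution $u$ and its own extinction time $T^*$. The monotonicity $u_k\uparrow u$ then gives the upper bound for $u$ directly. For the lower bound at fixed $t<T^*$ we use $t<T^*_k$ for large $k$ and the fact that $(T_k^*-t)\to (T^*-t)$.

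\emph{Upper bound.} We use the almost-representation estimate \eqref{eq:up_bound} with $t_1=t$ and $t_0=2t-T^*$. This choice is admissible because $t>2T^*/3$ gives $t_0>T^*/3>0$. Then $t_1-t_0=T^*-t$ and $t_1/t_0\le 3$, so
\[
\frac{u(x,t)}{\Phi_1(x)}\le C\,\frac{\|u(\cdot,t_0)\|_{L^\infty(\Omega)}^p}{T^*-t}.
\]
Since $t_0\ge T^*/3=\varepsilon T^*$ with $\varepsilon=1/3$, estimate \eqref{equ;u-L-infty-estimate} of Corollary \ref{cor:weighted-smoothing} gives
\[
\|u(\cdot,t_0)\|_{L^\infty(\Omega)}\le C_6(T^*-t_0)^{1/(p-1)}=C(T^*-t)^{1/(p-1)}.
\]
Substituting, we obtain $u/\Phi_1\le C(T^*-t)^{p/(p-1)-1}=C(T^*-t)^{1/(p-1)}$, which is the claimed upper bound.

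\emph{Lower bound.} We start from Lemma \ref{lm:global-lower-bounds-I}, which yields
\[
\frac{u(x,t)}{\Phi_1(x)}\ge \frac{C t^{1/(p-1)}}{T^{*p/(p-1)}-t^{p/(p-1)}}\int_\Omega u^p(\cdot,t)\Phi_1\,\ud x .
\]
For $t\in(2T^*/3,T^*)$ the mean value theorem gives $T^{*p/(p-1)}-t^{p/(p-1)}\le C T^{*1/(p-1)}(T^*-t)$, and also $t^{1/(p-1)}\ge cT^{*1/(p-1)}$. Hence the prefactor is bounded below by $c/(T^*-t)$. It therefore suffices to prove
\[
\int_\Omega u^p(\cdot,t)\Phi_1\,\ud x\ge c(T^*-t)^{p/(p-1)}.
\]
This is the main step. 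The difficulty is that Corollary \ref{cor:extinction-time} gives only an \emph{upper} bound on $\|u(\cdot,t)\|_{L^p_{\Phi_1}}$ in terms of $T^*-t$. We obtain the lower bound instead from the differential inequality used in the proof of Theorem \ref{thm:bhi_gengral}:
\[
\frac{\ud}{\ud t}\int_\Omega u^p\Phi_1\,\ud x=-\lambda_1\int_\Omega u\Phi_1\,\ud x\le -\lambda_1\int_\Omega \frac{u^2}{\|u(\cdot,t)/\Phi_1\|_{L^\infty(\Omega)}}\,\ud x .
\]
Here $\int u\Phi_1=\int u^2/(u/\Phi_1)$, so this is the case $r=1$ of that argument. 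By Lemma 8.5 of \cite{BF24}, $\|u(\cdot,\tau)\|_{L^2(\Omega)}\ge c(T^*-\tau)^{1/(p-1)}$. By the upper bound already proved, $\|u/\Phi_1\|_{L^\infty(\Omega)}\le C(T^*-\tau)^{1/(p-1)}$ for $\tau\in(2T^*/3,T^*)$. Combining these,
\[
\frac{\ud}{\ud \tau}\int_\Omega u^p\Phi_1\,\ud x\le -c(T^*-\tau)^{1/(p-1)} .
\]
Integrating from $t$ to $T^*$, where the integral vanishes, gives $\int_\Omega u^p(\cdot,t)\Phi_1\,\ud x\ge c(T^*-t)^{p/(p-1)}$. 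Inserting this into the bound from Lemma \ref{lm:global-lower-bounds-I} yields $u/\Phi_1\ge c(T^*-t)^{1/(p-1)}$.

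The point we expect to need the most care is the constant in Lemma 8.5 of \cite{BF24}. We must confirm that it depends only on $n,p,\Omega$ and not on the data, since the lower bound on $\|u(\cdot,\tau)\|_{L^2}$ is essentially the universal rate near extinction. If it does not, we would instead derive the lower bound on $\|u(\cdot,\tau)\|_{L^{p}_{\Phi_1}}$ from the extinction-time estimate applied at time $\tau$, as in Corollary \ref{cor:extinction-time}.
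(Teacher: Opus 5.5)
Your reduction to bounded data and your upper bound are the paper's argument. So is the overall structure of the lower bound: Lemma \ref{lm:global-lower-bounds-I} combined with the differential inequality for $\int_\Omega u^p\Phi_1\,\ud x$.

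\textbf{The gap.} It is the input $\|u(\cdot,\tau)\|_{L^2(\Omega)}\ge c\,(T^*-\tau)^{1/(p-1)}$ with $c$ independent of the data.
\begin{itemize}
\item A lower bound of this kind valid on the whole lifetime comes from the computation in Lemma \ref{lm:norm_bounds_basic}, namely $\frac{\ud}{\ud t}\int_\Omega u^s\,\ud x=-\frac{4s(s-p)}{p(s-p+1)^2}\int_\Omega|\nabla u^{(s-p+1)/2}|^2\,\ud x$ plus Sobolev. The paper always invokes Lemma 8.5 of \cite{BF24} in this form, at an exponent larger than $p$.
\item That computation requires $s>p$ (and $s\ge n(p-1)/2$). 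With $s=2$ it requires $p<2$, but the theorem allows $p\ge 2$ when $n=1$ or $n=2$.
\item For $p>2$ the bound is actually false on all of $(0,T^*)$. Take $u_0=M\chi_{B_\delta(x_0)}$ with $x_0\in\Omega$. Corollary \ref{cor:extinction-time} gives $T^*\ge cM^{p-1}\delta^{n(p-1)/p}$, while $\|u(\cdot,\tau)\|_{L^2(\Omega)}^{p-1}\to cM^{p-1}\delta^{n(p-1)/2}$ as $\tau\downarrow0$. This is far smaller as $\delta\to0$.
\item So no data-independent lemma can supply your bound at exponent $2$. The obstruction is the exponent, not the dependence of the constant, which was the point you flagged.
\item Your fallback also fails. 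Corollary \ref{cor:extinction-time} applied at time $\tau$ bounds $\|u(\cdot,\tau)\|_{L^p_{\Phi_1}(\Omega)}$ from \emph{above} in terms of $T^*-\tau$, as you noted yourself one paragraph earlier.
\end{itemize}

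\textbf{The fix.} The missing ingredient is the smoothing near extinction.
\begin{enumerate}
\item Take the lifetime lower bound at exponent $p+1$. This is admissible since $p+1>p$ and $p+1\ge n(p-1)/2$ for $p<\frac{n+2}{(n-2)_+}$.
\item Interpolate: $\int_\Omega u^{p+1}\,\ud x\le\|u\|_{L^\infty(\Omega)}^{p-1}\int_\Omega u^2\,\ud x$.
\item Bound $\|u(\cdot,\tau)\|_{L^\infty(\Omega)}\le C_6(T^*-\tau)^{1/(p-1)}$ by \eqref{equ;u-L-infty-estimate}, valid for $\tau\ge T^*/3$.
\end{enumerate}
This gives your $L^2$ bound on $(2T^*/3,T^*)$, and the rest of your argument then goes through unchanged.

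This is precisely the paper's proof. It takes $r=p$ in the differential inequality from Theorem \ref{thm:bhi_gengral}, that is, $\int_\Omega u\Phi_1\,\ud x\ge\|u\|_{L^{p+1}}^{p+1}/\bigl(\|u\|_{L^\infty}^{p-1}\|u/\Phi_1\|_{L^\infty}\bigr)$. It then bounds the denominator by $C(T^*-t)^{p/(p-1)}$ using \eqref{equ;u-L-infty-estimate} together with the upper bound just proved.

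For $p<2$ your exponent-$2$ variant is a legitimate shortcut. There the $L^2$ lifetime bound follows from the computation above with $s=2$, and the $L^\infty$ estimate is not needed in the lower-bound step.
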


\begin{proof}
By the same bounded approximation and extinction-time argument used in the proof of Theorem \ref{thm:bhi_gengral}, it suffices to consider bounded weak solutions.
For $t_0\geq T^*/3$, substitute Corollary
\ref{cor:weighted-smoothing} with $\varepsilon=1/3$ into
\eqref{eq:up_bound} to obtain
\begin{equation*}
\frac{u(x,t_1)}{\Phi_1(x)} 
\leq C \Big(\frac{t_1}{t_0}\Big)^{1/(p-1)} \frac{ (T^* - t_0)^{p/(p-1)} }{t_1 - t_0}.
\end{equation*}
Set $t_1 = t$ and $t_0 = 2t - T^*$. Since $t \geq 2T^*/3$, we have $t_0 \geq T^*/3 > 0$. Note that $t_1 - t_0 = T^* - t$ and $T^* - t_0 = 2(T^* - t)$. Hence,
\begin{align*}
\frac{u(x,t)}{\Phi_1(x)} 
&\leq C \Big(\frac{t}{2t-T^*}\Big)^{1/(p-1)} \frac{  [2(T^* - t) ]^{p/(p-1)} }{T^* - t} \\
&= C 2^{p/(p-1)} \Big(\frac{t}{2t-T^*}\Big)^{1/(p-1)} (T^* - t)^{\frac{1}{p-1}}.
\end{align*}
The function $t \mapsto \frac{t}{2t-T^*}$ is nonincreasing on $(T^*/2, T^*)$, and since $t \geq 2T^*/3 > T^*/2$, we obtain
\[
\frac{t}{2t-T^*} \leq \frac{2T^*/3}{2\cdot 2T^*/3 - T^*} = 2.
\]
Consequently,
\[
\frac{u(x,t)}{\Phi_1(x)} \leq C 2^{\frac{p+1}{p-1}} (T^* - t)^{\frac{1}{p-1}}.
\]
Choosing $C_2 = C 2^{\frac{p+1}{p-1}}$ establishes the right-hand side of \eqref{equ;u/Phi-2}.

We now prove the lower bound. Combining \eqref{equ;u-L-infty-estimate} with the upper bound just proved, we obtain
\[
\|u(\cdot, t)\|_{L^{\infty}(\Omega)}^{p-1}
\Big\|\frac{u(\cdot,t)}{\Phi_1}\Big\|_{L^{\infty}(\Omega)}
\le
C (T^*-t) (T^*-t)^{\frac{1}{p-1}}
=
C (T^*-t)^{\frac{p}{p-1}}.
\]
Using this estimate in place of \eqref{eq:two_bounds}, we obtain
\begin{equation}\label{equ;d/dt2}
\frac{\mathrm{d}}{\mathrm{d} t} \int_\Omega u(x,t)^p \Phi_1(x)\,\mathrm{d} x
\le -\, C (T^*-t)^{\frac{1}{p-1}}.
\end{equation}
Integrating \eqref{equ;d/dt2} from $t$ to $T^*$, we obtain
\[
\int_{\Omega} u(x,t)^p\,\Phi_1(x)\,\mathrm{d} x
\ge
C \,(T^*-t)^{\frac{p}{p-1}}.
\]
Observe that
\[
\frac{t^{\frac{1}{p-1}}}{T^{*\frac{p}{p-1}}-t^{\frac{p}{p-1}}} 
\ge \Big[\frac{p-1}{p}\Big(\frac{2}{3}\Big)^{\frac{1}{p-1}}\Big] \frac{1}{T^*-t}.
\]
By Lemma~\ref{lm:global-lower-bounds-I}, we obtain the left-hand side
estimate $C_1 (T^*-t)^{\frac{1}{p-1}}$ for some constant $C_1>0$
depending only on $n$, $p$, and $\Omega$.
This completes the proof.
\end{proof}


\section{Ancient Solutions}\label{sec:Ancient}
In this section, we first establish a priori estimates and boundary bounds
for positive ancient solutions to \eqref{eq:ADP}.  We then analyze the
backward limits of the rescaled flow and prove the polynomial/exponential
convergence-rate dichotomy in Theorem \ref{thm:asymptotic_behaviour}.
Finally, we establish the rigidity criteria in Theorem
\ref{thm:ancient-rigidity}.
\subsection{Basic estimates}\leavevmode
\begin{lem}\label{lm:norm_bounds_basic}
Suppose that $r\in\mathbb{R}$ satisfies $r >p$ and $r \geq \frac{n(p-1)}{2}$.
If $u$ is a positive ancient solution to \eqref{eq:ADP}, then there exist constants
$C_1=C_1(n,p,r,\Omega)>0$ and $C_2=C_2(n,p,\Omega)>0$
such that for all $t \in (-\infty, 0]$,
\begin{equation}\label{eq:r_norm_lower_bound}
\|u(\cdot, t)\|_{L^{r}(\Omega)} \geq C_1 (-t)^{ \frac{1}{p-1} }, 
\end{equation}
and
\begin{equation}\label{eq:goal-weighted-up-pointwise}
\|u(\cdot, t)\|_{L^{p}_{\Phi_1}(\Omega)}
\le C_2 (-t)^{\frac{1}{p-1}}.
\end{equation}
\end{lem}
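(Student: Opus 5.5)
The plan is to prove the two estimates separately. The lower bound \eqref{eq:r_norm_lower_bound} will come from combining the unweighted global smoothing effect \eqref{eq:global_smoothing_effects} with a universal lower bound for $\|u(\cdot,s)\|_{L^\infty}$ in terms of the remaining time to extinction. The upper bound \eqref{eq:goal-weighted-up-pointwise} will come from the differential inequality for the weighted $L^p$ norm already computed in Lemma \ref{lem:monotonicity}. Throughout, $u$ is classical, positive on $(-\infty,0)$, and extinguishes at $T^*=0$. All computations are on compact time intervals inside $(-\infty,0)$, where $u$ is bounded, so they are legitimate.

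\emph{Upper bound.} As in the proof of Lemma \ref{lem:monotonicity}, for $\tau<0$ we have
\[
\frac{\ud}{\ud\tau}\int_\Omega u^p\Phi_1\,\ud x=-\lambda_1\int_\Omega u\Phi_1\,\ud x
\ge -\lambda_1\|\Phi_1\|_{L^1(\Omega)}^{(p-1)/p}\Big(\int_\Omega u^p\Phi_1\,\ud x\Big)^{1/p}.
\]
Hence $Y(\tau):=\|u(\cdot,\tau)\|_{L^p_{\Phi_1}}^{p-1}$ satisfies $Y'\ge -\frac{\lambda_1(p-1)}{p}\|\Phi_1\|_{L^1}^{(p-1)/p}$ wherever $Y>0$. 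Note that this step never used $p<\frac{n+1}{n-1}$. Integrating from $t$ to $t'\uparrow 0$ and using $Y(t')\to 0$ (extinction at $0$, with $u$ continuous up to $t=0$ in $L^p_{\Phi_1}$ since it is bounded near $0$) gives $Y(t)\le \frac{\lambda_1(p-1)}{p}\|\Phi_1\|_{L^1}^{(p-1)/p}(-t)$. This is \eqref{eq:goal-weighted-up-pointwise} with $C_2=C_2(n,p,\Omega)$, i.e.\ exactly the first inequality of \eqref{eq:monotonicity_norm_p_II}.

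\emph{Lower bound.} First I show $\|u(\cdot,s)\|_{L^\infty}\ge c(-s)^{1/(p-1)}$ for all $s<0$, with $c=c(n,p,\Omega)$. Pick a ball $B\supset\overline\Omega$ and a positive solution $S_B$ of \eqref{eq:elliptic-equation} on a slightly larger ball, so that $\delta:=\min_{\overline\Omega}S_B>0$. Set $M:=\|u(\cdot,s)\|_\infty$. The separable solution
\[
\Big(\tfrac{p-1}{p}\Big)^{\frac1{p-1}}(T-\tau)^{\frac1{p-1}}S_B,\qquad T=\tfrac{p}{p-1}(M/\delta)^{p-1},
\]
dominates $u$ at time $s$ on $\overline\Omega$ and is positive on $\partial\Omega$. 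By comparison, $u$ vanishes by time $s+T$, so $-s\le C M^{p-1}$.

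Next, for $r$ in the range \eqref{eq:initial_datum_range_1}, I apply \eqref{eq:global_smoothing_effects} starting at time $t$ with elapsed time $-t/2$:
\[
c\Big(\tfrac{-t}{2}\Big)^{\frac1{p-1}}\le\|u(\cdot,t/2)\|_\infty\le C(-t)^{-n\theta_r}\|u(\cdot,t)\|_{L^r}^{2r\theta_r}.
\]
Since $\big(\frac1{p-1}+n\theta_r\big)/(2r\theta_r)=\frac{1}{p-1}$, this rearranges to \eqref{eq:r_norm_lower_bound}. The hypotheses $r>p$ and $r\ge n(p-1)/2$ place $r$ in \eqref{eq:initial_datum_range_1}, except in the borderline case $r=\frac{n(p-1)}2$ with $p\ge\frac{n}{(n-2)_+}$.

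The main obstacle is this borderline exponent. It cannot be reached by interpolation, because on a bounded domain lower bounds only pass from smaller to larger exponents. For this case I would invoke the lower bound of Bonforte--Figalli \cite[Lemma 8.5]{BF24}, already used in the proof of Theorem \ref{thm:bhi_gengral}, which gives $\|u(\cdot,t)\|_{L^r}\ge C(T^*-t)^{1/(p-1)}$ for $r\ge n(p-1)/2$ and applies on each interval $[t_0,0]$ to the bounded solution started at $t_0$. If that citation turned out not to cover the endpoint, the fallback is to apply the same smoothing-plus-extinction argument with a Lorentz/Marcinkiewicz-type refinement of \eqref{eq:global_smoothing_effects} at the critical exponent.
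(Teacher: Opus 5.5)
Your proof of the upper bound \eqref{eq:goal-weighted-up-pointwise} is the paper's argument. For the lower bound \eqref{eq:r_norm_lower_bound} you take a different route: an extinction-time bound $-s\le C\|u(\cdot,s)\|_\infty^{p-1}$ combined with the unweighted smoothing effect \eqref{eq:global_smoothing_effects}. As written, that route has two gaps.

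\textbf{First gap: the comparison function.} You use a positive solution $S_B$ of $-\Delta S=S^p$ on a ball. For $n\ge3$ and $p\ge\frac{n+2}{n-2}$ no such solution exists, by Pohozaev (a ball is star-shaped). The lemma imposes no upper bound on $p$. It is later applied, through Theorem \ref{thm:boundary-behavior}, under conditions (ii)--(iii) of Theorem \ref{thm:asymptotic_behaviour}, where $p$ may be supercritical. This is repairable. The function $\bigl(\tfrac{p-1}{p}(T-\tau)\bigr)^{1/(p-1)}\phi$ is a supersolution as soon as $-\Delta\phi\ge\phi^p$. The choice $\phi=\epsilon\,(A-|x|^2/(2n))$, with $A$ large and $\epsilon$ small, is positive on $\overline\Omega$ and satisfies this for every $p>1$. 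With that fix, your argument is valid for all $r$ in \eqref{eq:initial_datum_range_1}.

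\textbf{Second gap: the endpoint.} The case $r=\frac{n(p-1)}{2}$ (with $p>\frac{n}{n-2}$) is not actually proved. It rests on a citation whose range you could not confirm, plus an unspecified Lorentz-space fallback. The missing idea is that no smoothing is needed at all. For $r>p$ the boundary term vanishes, and
\[
\frac{\mathrm d}{\mathrm dt}\int_\Omega u^r\,\mathrm dx=-\frac{4r(r-p)}{p(r-p+1)^2}\int_\Omega\bigl|\nabla u^{\frac{r-p+1}{2}}\bigr|^2\,\mathrm dx\le -C\Big(\int_\Omega u^r\,\mathrm dx\Big)^{\frac{r-p+1}{r}} .
\]
The inequality is the Sobolev inequality for $w=u^{(r-p+1)/2}\in H^1_0(\Omega)$, followed by H\"older on the bounded domain. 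It requires $\frac{2r}{r-p+1}\le 2^*$ (no restriction when $n\le2$). For $n\ge3$ this is equivalent to $r\ge\frac{n(p-1)}{2}$, and the endpoint is included because Sobolev holds at the critical exponent.

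Hence $\frac{\mathrm d}{\mathrm dt}\|u(\cdot,t)\|_{L^r(\Omega)}^{p-1}\le -C$. Integrating from $t$ to the extinction time $0$ gives \eqref{eq:r_norm_lower_bound} for every admissible $r$ at once; this is the paper's proof. It also explains why the hypotheses are exactly $r>p$ and $r\ge\frac{n(p-1)}{2}$. Your route, by contrast, inherits the strict inequality of \eqref{eq:initial_datum_range_1} from the smoothing theorem and needs an extra comparison step.
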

\begin{proof}
It follows from \eqref{eq:ADP} and integration by parts that there
exists a constant $C>0$, depending on $n$, $p$, $r$, and $\Omega$,
such that
\begin{equation}\label{eq:derivative_r_norm}
\begin{split}
\frac{\ud}{\ud t} \int_\Omega u^{r} \,\mathrm{d}x
= & \,r \int_\Omega u^{r-1} \partial_t u \,\mathrm{d}x
=\frac{r}{p} \int_\Omega u^{r-p} \Delta u \,\mathrm{d}x \\
= & -\frac{r(r-p)}{p} \int_\Omega u^{r-p-1} |\nabla u|^2 \,\mathrm{d}x
= -\frac{4r(r-p)}{p(r-p+1)^2} \int_\Omega | \nabla u^{\frac{r-p+1}{2}} |^2 \,\mathrm{d}x \\
\leq & - C \Big( \int_\Omega u^{r} \,\mathrm{d}x \Big)^{\frac{r-p+1}{r}},
\end{split}
\end{equation}
where the last inequality holds by the Sobolev inequality and H\"older inequality, under the assumptions $r >p$ and $r \geq \frac{n(p-1)}{2}$.
Equivalently, this reduces to
\begin{equation*}
\frac{\mathrm{d}}{\mathrm{d}t} \|u( \cdot ,t )\|_{L^{r}(\Omega)}^{p-1} \leq - C,
\end{equation*}
from which \eqref{eq:r_norm_lower_bound} follows immediately by integration from $t$ to $0$.
Similarly, we have
\begin{align*}
\frac{\ud}{\ud t} \int_\Omega u(x,t)^p\,\Phi_1(x)\, \mathrm{d}x
= & \int_\Omega \partial_t u^p(x,t)\,\Phi_1(x)\, \mathrm{d}x 
=  \int_\Omega \Delta u(x,t)\,\Phi_1(x)\, \mathrm{d}x \\
= & -\lambda_1 \int_\Omega u(x,t)\,\Phi_1(x)\, \mathrm{d}x \\
\geq & -\lambda_1 \Big( \int_\Omega u(x,t)^p\,\Phi_1(x)\, \mathrm{d}x \Big)^{1/p} \Big( \int_\Omega \Phi_1(x)\, \mathrm{d}x \Big)^{(p-1)/p}.
\end{align*}
Equivalently,
\begin{equation*}
\frac{\ud}{\ud t} \Big( \int_\Omega u(x,t)^p\,\Phi_1(x)\, \mathrm{d}x \Big)^{(p-1)/p}
\geq -\frac{\lambda_1(p-1)}{p} \Big( \int_\Omega \Phi_1(x)\, \mathrm{d}x \Big)^{(p-1)/p},
\end{equation*}
and hence for any $t\in (-\infty, 0]$,
\begin{equation*}
\Big( \int_\Omega u(x,t)^p\,\Phi_1(x)\, \mathrm{d}x \Big)^{(p-1)/p}
\leq \frac{\lambda_1(p-1)}{p} \Big( \int_\Omega \Phi_1(x)\, \mathrm{d}x \Big)^{(p-1)/p} (-t),
\end{equation*}
This proves the desired estimate and completes the proof.
\end{proof}

\begin{lem}\label{lm:noninvreasing_norm_p+1}
Let $u$ be a positive ancient solution to \eqref{eq:ADP}.
Then the mapping $$t \mapsto (-t)^{-\frac{1}{p-1}} \|u(\cdot,t)\|_{L^{p+1}(\Omega)}$$ 
is nonincreasing on $(-\infty, 0)$.
\end{lem}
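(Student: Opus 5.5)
Pass to the rescaled variables \eqref{eq:v_transform}. Since $\|v(\cdot,s)\|_{L^{p+1}(\Omega)}$ is a positive constant multiple of $(-t)^{-\frac1{p-1}}\|u(\cdot,t)\|_{L^{p+1}(\Omega)}$, and $s$ is increasing in $t$, it suffices to show that $s\mapsto \int_\Omega v^{p+1}(x,s)\,\ud x$ is nonincreasing on $\mathbb R$. We use the energy functional
\[
E(v):=\frac12\int_\Omega|\nabla v|^2\,\ud x-\frac1{p+1}\int_\Omega v^{p+1}\,\ud x,
\]
which is a Lyapunov functional for \eqref{eq:Equation_of_v}. All computations are justified by the assumption that $u$ is classical and by the boundary regularity \eqref{eq:reg-jx}, which make the integrations by parts legitimate.

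\textbf{Step 1.} Multiply \eqref{eq:Equation_of_v} by $\partial_s v$ and integrate. This gives
\[
\frac{\ud}{\ud s}E(v(\cdot,s))=-p\int_\Omega v^{p-1}(\partial_s v)^2\,\ud x\le0 ,
\]
so $E$ is nonincreasing in $s$.

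\textbf{Step 2.} Multiply \eqref{eq:Equation_of_v} by $v$ and integrate. This gives
\[
\frac{p}{p+1}\frac{\ud}{\ud s}\int_\Omega v^{p+1}\,\ud x=-\int_\Omega|\nabla v|^2\,\ud x+\int_\Omega v^{p+1}\,\ud x .
\]
Using the definition of $E$, the right-hand side equals $-2E(v)+\frac{1-p}{p+1}\int_\Omega v^{p+1}\,\ud x$. Therefore the claim follows once we show that $E(v(\cdot,s))\ge0$ for every $s$.

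\textbf{Step 3 (the main obstacle).} To get $E\ge0$, it suffices by Step 1 to prove $\lim_{s\to+\infty}E(v(\cdot,s))\ge0$, that is, to control the behaviour as $t\to0^-$. Suppose instead that $E(v(\cdot,s_0))<0$ for some $s_0$. Then $E(v(\cdot,s))\le E(v(\cdot,s_0))=:-\delta<0$ for all $s\ge s_0$. Writing $Y(s)=\int_\Omega v^{p+1}\,\ud x$, Step 2 gives
\[
\frac{p}{p+1}Y'\ge 2\delta+\frac{1-p}{p+1}Y .
\]
On the other hand, the identity in Step 2 together with the Poincaré inequality gives $\frac{p}{p+1}Y'\le Y-\lambda_1\int_\Omega v^2\,\ud x$. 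Rather than chase these, it is cleaner to argue directly in the original variables. The functional
\[
J(t):=\frac12\int_\Omega|\nabla u|^2\,\ud x-\frac{p}{(p-1)(p+1)}\,\frac{1}{(-t)}\int_\Omega u^{p+1}\,\ud x
\]
is, up to the factor $(-t)^{\frac{2}{p-1}}$, the same as $E$. Near the extinction time, $u$ is a solution of \eqref{eq:Cauchy-Dirichlet} with bounded data, so $v(\cdot,s)$ converges as $s\to\infty$ along subsequences to nonnegative stationary profiles. This follows from the extinction-time asymptotics of the bounded-data Cauchy--Dirichlet problem near $T^*$ (the literature cited in the introduction). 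For these profiles $E(S)=\frac{p-1}{2(p+1)}\int_\Omega S^{p+1}\,\ud x\ge0$. Lower semicontinuity, or convergence in $C^1$, then gives $\lim_{s\to\infty}E\ge0$.

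If the forward asymptotics are unavailable in the generality needed, we use the fallback argument. With $E\le-\delta$, Step 2 gives $Y'\ge c\delta$ whenever $Y$ is small; more generally, the inequality $\frac{p}{p+1}Y'\ge 2\delta-\frac{p-1}{p+1}Y$ prevents $Y$ from decaying. Since $E\le-\delta$ forces $Y\ge\frac{p+1}{1}\delta>0$, this combines with Step 1 and the standard blow-up argument of Levine type (concavity of $\int_s\int v^{p+1}$) to show that $Y$ blows up in finite $s$. That contradicts the global existence of $v$ on $\mathbb R$, which holds because $u$ is positive on $(-\infty,0)$.

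With $E\ge0$ established, Step 2 yields $\frac{\ud}{\ud s}Y\le0$, which proves the lemma.
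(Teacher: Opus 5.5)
There is a genuine gap, and it begins with a sign error in Step~2. Since $2E(v)=\int_\Omega|\nabla v|^2\,\mathrm{d}x-\frac{2}{p+1}\int_\Omega v^{p+1}\,\mathrm{d}x$, the right-hand side of your identity is
\[
-\int_\Omega|\nabla v|^2\,\mathrm{d}x+\int_\Omega v^{p+1}\,\mathrm{d}x
=-2E(v)+\frac{p-1}{p+1}\int_\Omega v^{p+1}\,\mathrm{d}x .
\]
The last term carries a plus sign, so $E\ge0$ does not imply $Y'\le0$. As a sanity check, a stationary profile $S$ has $Y'=0$, whereas your formula would give $\frac{p}{p+1}Y'=-\frac{2(p-1)}{p+1}\int_\Omega S^{p+1}\,\mathrm{d}x<0$.

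What your Step~2 identity actually requires is the Nehari-type inequality $\int_\Omega|\nabla v|^2\,\mathrm{d}x\ge\int_\Omega v^{p+1}\,\mathrm{d}x$ for every $s$. This is strictly stronger than $E\ge0$. It does not follow from the monotonicity of $E$ or from the behaviour of $E$ as $s\to+\infty$. So Step~3, even if completed, does not prove the lemma. Separately, the forward convergence to stationary profiles that you invoke is known only in the Sobolev-subcritical range, while the lemma is stated for all $p>1$.

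The missing ingredient is the Berryman--Holland monotonicity of the Rayleigh quotient $\mathcal Q[u]=\|\nabla u\|_{L^2(\Omega)}^2/\|u\|_{L^{p+1}(\Omega)}^2$.

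\begin{itemize}
\item Write $\partial_tu=\frac1pu^{1-p}\Delta u$ and apply Cauchy--Schwarz to $\int_\Omega|\nabla u|^2=-\int_\Omega u\Delta u$. This gives $\frac{\mathrm{d}}{\mathrm{d}t}\int_\Omega|\nabla u|^2=-\frac2p\int_\Omega u^{1-p}(\Delta u)^2\le-\frac2p\bigl(\int_\Omega|\nabla u|^2\bigr)^2\bigl(\int_\Omega u^{p+1}\bigr)^{-1}$.
\item Combined with $\frac{\mathrm{d}}{\mathrm{d}t}\int_\Omega u^{p+1}=-\frac{p+1}{p}\int_\Omega|\nabla u|^2$, this shows that $t\mapsto\mathcal Q[u(\cdot,t)]$ is nonincreasing.
\item The function $f(t):=\|u(\cdot,t)\|_{L^{p+1}}^{p-1}$ satisfies $f'=-\frac{p-1}{p}\mathcal Q$. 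Hence $f$ is convex, and $f(0)=0$ by extinction.
\item Convexity with $f(0)=0$ yields that $f(t)/(-t)$ is nonincreasing. This is the paper's proof.
\end{itemize}

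In your rescaled variables the same mechanism reads $y'=\kappa\,(y-\mathcal Q[v])$, with $y=\|v\|_{L^{p+1}}^{p-1}$ and $\kappa=\frac{p-1}{p}$. By scale invariance, $\mathcal Q[v(\cdot,s)]=\mathcal Q[u(\cdot,t)]$, so it is nonincreasing in $s$. If $y(s_0)>\mathcal Q[v(\cdot,s_0)]$, then $y(s)\ge c\,e^{\kappa s}=c\,(-t)^{-1}$ for $s\ge s_0$. That means $\|u(\cdot,t)\|_{L^{p+1}}$ stays bounded below as $t\to0^-$, contradicting extinction. So the only information needed at the extinction time is $\|u(\cdot,t)\|_{L^{p+1}}\to0$, not any convergence to stationary profiles.
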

\begin{proof}
Our proof is inspired by the proof of Lemma 2 in Berryman--Holland
\cite{BH80}.
Using \eqref{eq:ADP} and integration by parts, we have
\begin{equation}\label{eq:derivetion_nabla}
\begin{split}
\frac{\ud }{\ud t} \int_\Omega |\nabla u(x,t)|^2 \,\ud x 
= & \,2 \int_\Omega \nabla u(x,t) \cdot \nabla u_t(x,t) \,\ud x
= -2 \int_\Omega u_t(x,t) \Delta u(x,t) \,\ud x \\
= & -\frac{2}{p} \int_\Omega \frac{(u(x,t)\Delta u(x,t))^2}{u(x,t)^{1+p}} \,\ud x \\
\leq & \,-\frac{2}{p} \Big(\int_\Omega |\nabla u(x,t)|^2 \,\ud x\Big)^2 \Big(\int_\Omega u(x,t)^{p+1} \,\ud x \Big)^{-1},
\end{split}
\end{equation}
where we have used the Cauchy-Schwarz inequality in the last inequality.
Applying the same argument with $r=p+1$ in
\eqref{eq:derivative_r_norm}, we obtain
\begin{equation}\label{eq:identity_norm}
\frac{\ud}{\ud t} \int_\Omega u(x,t)^{p+1} \,\ud x
= -\frac{p+1}{p} \int_\Omega |\nabla u(x,t)|^2 \,\ud x.
\end{equation}
Using \eqref{eq:derivetion_nabla} and \eqref{eq:identity_norm}, we arrive at
\begin{equation*}
\frac{\ud }{\ud t} \Big(\ln \int_\Omega |\nabla u(x,t)|^2 \,\ud x\Big) 
\leq  \frac{\ud }{\ud t} \Big(\ln \Big( \int_\Omega u(x,t)^{p+1} \,\ud x\Big)^{\frac{2}{p+1}}\Big),
\end{equation*}
It follows that the mapping $t \mapsto \mathcal{Q}[u(\cdot, t)]$ is nonincreasing on $(-\infty, 0]$, where $\mathcal{Q}$ is the nonlinear Rayleigh quotient defined by
\[
\mathcal{Q}[u] = \frac{\|\nabla u\|_{L^2(\Omega)}^2}{\| u\|_{L^{p+1}(\Omega)}^2}.
\]
Using \eqref{eq:identity_norm} again, we have
\begin{equation*}
\frac{\ud }{\ud t}\Big(\int_{\Omega}u(x,t)^{p+1} \,\ud x\Big)^{\frac{p-1}{p+1}}
= -\frac{p-1}{p}\mathcal{Q}[u(\cdot, t)], 
\end{equation*}
and so 
\begin{equation*}
\frac{\ud^2 }{\ud t^2}\Big(\int_{\Omega}u(x,t)^{p+1} \,\ud x\Big)^{\frac{p-1}{p+1}}
= -\frac{p-1}{p}\frac{\ud }{\ud t}\mathcal{Q}[u(\cdot, t)] \geq 0.
\end{equation*}
Therefore, the mapping $t \mapsto \|u(\cdot,t)\|_{L^{p+1}(\Omega)}^{p-1}$ is convex on $(-\infty, 0]$. 
Hence for $-\infty<t_1 < t_2 < 0$,
\begin{equation*}
\|u(\cdot,t_2)\|_{L^{p+1}(\Omega)}^{p-1} \leq \frac{\|u(\cdot,t_1)\|_{L^{p+1}(\Omega)}^{p-1}}{-t_1}(-t_2),
\end{equation*}
or equivalently,
\begin{equation*}
(-t_2)^{-\frac{1}{p-1}} \|u(\cdot,t_2)\|_{L^{p+1}(\Omega)} \leq (-t_1)^{-\frac{1}{p-1}} \|u(\cdot,t_1)\|_{L^{p+1}(\Omega)}. 
\end{equation*}
This completes the proof.
\end{proof}

\begin{lem}\label{lm:norm_infty_upper_bound}
Let $\varepsilon>0$.
Suppose that $u$ is a positive ancient solution to \eqref{eq:ADP} such that one of the following conditions holds:
\begin{itemize}
\item[(i)] There exists $r$ satisfying \eqref{eq:initial_datum_range_1} such that
\[
c_1:=\limsup_{t \to -\infty}\,(-t)^{-\frac{1}{p-1}} \|u(\cdot,t)\|_{L^{r}(\Omega)} < \infty.
\]
\item[(ii)] There exists $r$ satisfying \eqref{eq:initial_datum_range_2} such that
\[
c_2:=\limsup_{t \to -\infty}\,(-t)^{-\frac{1}{p-1}} \|u(\cdot,t)\|_{L^{r}_{\Phi_1}(\Omega)} < \infty.
\]
\end{itemize}
Then, for every $t \in (-\infty,-\varepsilon]$,
\begin{equation}\label{eq:L_infty_up}
\|u(\cdot,t)\|_{L^{\infty}(\Omega)} \le C_{\infty}(-t)^{\frac{1}{p-1}},
\end{equation}
{Here $C_\infty$ may depend on $\varepsilon$ and $u$.}
{Furthermore, if
$p \in \big(1,\frac{n+2}{(n-2)_+}\big)$, then
\eqref{eq:L_infty_up} holds for all $t<0$ with $C_\infty$
independent of $\varepsilon$.}
\end{lem}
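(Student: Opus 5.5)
The plan is to apply the global smoothing estimate \eqref{eq:global_smoothing_effects} in case (i), and Corollary \ref{cor:weighted-smoothing} in case (ii), on backward time windows of length comparable to $-t$. To do this we first need the norm hypothesis to hold at every time, not just asymptotically.

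First, the $\limsup$ bound is upgraded to a bound valid for all $t\le -\varepsilon$. By the definition of $c_1$ (resp. $c_2$), there is $t_*<0$ such that $\|u(\cdot,\tau)\|\le 2c_i(-\tau)^{1/(p-1)}$ for $\tau\le t_*$. On $[t_*,-\varepsilon]$ the solution is classical and continuous up to the boundary, so the norm is bounded by a constant depending on $u$. For $\tau\in[t_*,-\varepsilon]$ we also have $(-\tau)\ge\varepsilon$. Hence
\[
\|u(\cdot,\tau)\|\le K(-\tau)^{1/(p-1)}\quad\text{for all }\tau\le-\varepsilon,
\]
with $K=K(u,\varepsilon)$. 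Alternatively, for $r\ge p$ one can use the monotonicity of Lemma \ref{lem:monotonicity} in place of continuity. This is where the dependence of $C_\infty$ on $\varepsilon$ and $u$ enters.

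Next, fix $t\le-\varepsilon$ and set $t_0=2t<t$, so that $t-t_0=-t$ and $-t_0=2(-t)$. By time translation invariance, $u(\cdot,t_0+\cdot)$ is a solution of \eqref{eq:Cauchy-Dirichlet} with initial datum $u(\cdot,t_0)$. In case (i), \eqref{eq:global_smoothing_effects} gives
\[
\|u(\cdot,t)\|_\infty\le C(-t)^{-n\theta_r}\bigl(K(-2t)^{1/(p-1)}\bigr)^{2r\theta_r}.
\]
The total power of $(-t)$ is $\theta_r\bigl(2r/(p-1)-n\bigr)=\frac{1}{p-1}\cdot\frac{2r-n(p-1)}{2r-n(p-1)}=\frac1{p-1}$. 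In case (ii), \eqref{equ;u-L-infty-estimate1} gives the power
\[
-\frac{q}{r-q(p-1)}+\frac{r}{(p-1)(r-q(p-1))}=\frac1{p-1}.
\]
The scaling works out in both cases, which proves \eqref{eq:L_infty_up}.

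For the refinement with $p<\frac{n+2}{(n-2)_+}$, the task is to obtain a uniform bound near $t=0$. The plan is to use the subcritical extinction theory cited in the introduction (Berryman--Holland, Bonforte--Figalli, and \cite{JX23}): for subcritical $p$, bounded solutions satisfy $\|u(\cdot,t)\|_\infty\le C(T^*-t)^{1/(p-1)}$ near extinction. We apply this to the solution started at time $-1$ (or at $-\varepsilon$), with $T^*=0$; it yields the bound on $[-\varepsilon,0)$ and covers small $|t|$. This regime is the main obstacle, because the sharp extinction rate genuinely needs subcriticality. A self-contained route I would try first is this: the convexity argument of Lemma \ref{lm:noninvreasing_norm_p+1} controls $(-t)^{-1/(p-1)}\|u\|_{L^{p+1}}$ from above on $[t_0,0)$, and $p+1$ satisfies \eqref{eq:initial_datum_range_1} exactly when $p<\frac{n+2}{(n-2)_+}$. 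This gives a uniform $L^{p+1}$ bound of the right order for all $t<0$. Repeating the windowed smoothing step with $r=p+1$ then gives \eqref{eq:L_infty_up} for all $t<0$, with a constant depending only on $u$ through $\sup_{t\le -1}(-t)^{-1/(p-1)}\|u\|_{L^{p+1}}$, and independent of $\varepsilon$.
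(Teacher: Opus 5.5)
Your proposal is correct and follows the paper's argument. The paper also applies windowed smoothing, and its exponent bookkeeping yields exactly $(-t)^{1/(p-1)}$. In the subcritical case, your self-contained route matches the paper's: get an $L^\infty$ bound for very negative $t$, hence an $L^{p+1}$ bound, extend it to all $t<0$ via Lemma \ref{lm:noninvreasing_norm_p+1}, then smooth on $[2t,t]$ with $r=p+1$, which satisfies \eqref{eq:initial_datum_range_1}.

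The only difference is how the $\varepsilon$-dependence is absorbed. The paper enlarges the window to $[At,t]$ with $A=A(\varepsilon,u)$, so that the starting time lies in the asymptotic regime. You instead bound the norm on the compact interval $[t_*,-\varepsilon]$ and keep the window $[2t,t]$. Both work, and your appeal to external extinction-rate results is unnecessary.
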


\begin{proof}
{
By the limsup assumptions, there exist $t_0<0$ and constants $b_i>0$
such that, for every $t\leq t_0$,
\begin{align*}
\|u(\cdot,t)\|_{L^{r}(\Omega)} \le b_1(-t)^{\frac{1}{p-1}},
\quad &\text{if \textup{(i)} holds},\\
\|u(\cdot,t)\|_{L^{r}_{\Phi_1}(\Omega)} \le b_2(-t)^{\frac{1}{p-1}},
\quad &\text{if \textup{(ii)} holds}.
\end{align*}
Fix $A>1$ such that $-A\varepsilon\leq t_0$.  Then $At\leq t_0$
whenever $t\leq-\varepsilon$.  Under condition \textup{(i)}, applying
\eqref{eq:global_smoothing_effects} to the time interval $[At,t]$
gives
\begin{align*}
\|u(\cdot,t)\|_{L^\infty(\Omega)}
&\leq C(t-At)^{-\frac{n}{2r-n(p-1)}}
\|u(\cdot,At)\|_{L^r(\Omega)}^{\frac{2r}{2r-n(p-1)}}\\
&\leq C(-t)^{\frac1{p-1}}.
\end{align*}
Under condition \textup{(ii)}, Theorem
\ref{thm:weighted-smoothing_1}, applied on the same time interval,
yields
\begin{align*}
\|u(\cdot,t)\|_{L^\infty(\Omega)}
&\leq C(t-At)^{-\frac{q}{r-q(p-1)}}
\|u(\cdot,At)\|_{L^r_{\Phi_1}(\Omega)}^{
\frac{r}{r-q(p-1)}}\\
&\leq C(-t)^{\frac1{p-1}}.
\end{align*}
This proves the first assertion.
}

{Now assume
$p\in\big(1,\frac{n+2}{(n-2)_+}\big)$.  Taking $b_i=c_i+1$ in the
corresponding case and applying the preceding argument with $A=2$, we
obtain, for all sufficiently negative $t$,
\[
\|u(\cdot,t)\|_{L^\infty(\Omega)}
\leq M_i(-t)^{\frac1{p-1}}
\]
with $M_i$ independent of $t$.
Consequently,
\[
(-t)^{-\frac1{p-1}}\|u(\cdot,t)\|_{L^{p+1}(\Omega)}
\leq |\Omega|^{\frac1{p+1}}M_i
\]
for all sufficiently negative $t$.  Lemma
\ref{lm:noninvreasing_norm_p+1} extends this estimate to every $t<0$.

Since $p$ is Sobolev-subcritical, $p+1$ satisfies
\eqref{eq:initial_datum_range_1}.  Applying
\eqref{eq:global_smoothing_effects} on $[2t,t]$ now proves
\eqref{eq:L_infty_up} for every $t<0$, with a constant independent of
$\varepsilon$.}
\end{proof}

{It follows from Lemma \ref{lm:norm_bounds_basic} and Corollary
\ref{cor:weighted-smoothing} that for
$p \in \big(1, \frac{n+1}{(n-1)_+}\big)$, the estimate
\eqref{eq:L_infty_up} holds on $(-\infty,0]$ without any additional
assumptions.}
\begin{lem}\label{lem:Lpplus1-growth}
Let $p \in \big(1, \frac{n+1}{(n-1)_+}\big)$.
If $u$ is a positive ancient solution to \eqref{eq:ADP}, then there exists a constant $C_{\infty}>0$, depending only on $n$,  $p$  and $\Omega$, such that
\begin{equation}\label{eq:poly-bound}
\|u(\cdot,t)\|_{L^{\infty}(\Omega)}
\le C_{\infty}(-t)^{\frac{1}{p-1}},
\quad\forall\,t \in (-\infty,0].
\end{equation}
\end{lem}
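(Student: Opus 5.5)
The plan is to combine the weighted $L^p_{\Phi_1}$ upper bound \eqref{eq:goal-weighted-up-pointwise} from Lemma \ref{lm:norm_bounds_basic} with the smoothing estimate \eqref{equ;u-L-infty-estimate1} of Corollary \ref{cor:weighted-smoothing} in the natural case $r=p$. For $p\in\bigl(1,\frac{n+1}{(n-1)_+}\bigr)$ this choice is admissible under \eqref{eq:initial_datum_range_2}. For $n=1$ we additionally fix some $q\in(1,p/(p-1))$, so that $p-q(p-1)>0$ and $C_5$ depends only on $p$ and $\Omega$.

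\emph{Step 1: applicability of the smoothing estimate.} An ancient solution restricted to $[t_0,0)$ is a bounded classical solution of \eqref{eq:Cauchy-Dirichlet}. This holds because $u\in C^2(\overline\Omega\times(-\infty,0))$ and $u$ is positive with zero boundary values. After a time translation, it is therefore the limit solution with datum $u(\cdot,t_0)$; uniqueness in Theorem \ref{thm:limit_sol} identifies the two. Consequently \eqref{equ;u-L-infty-estimate1} holds on $[t_0,t]$ with the constant $C_5=C_5(n,p,\Omega)$ depending only on these quantities, independently of $u$.

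\emph{Step 2: choice of times.} Fix $t<0$ and take $t_0=2t$, so that $t-t_0=-t$ and $-t_0=2(-t)$. By \eqref{equ;u-L-infty-estimate1} with $r=p$,
\[
\|u(\cdot,t)\|_{L^\infty(\Omega)}\le C_5(-t)^{-\frac{q}{p-q(p-1)}}\Big(\int_\Omega u(x,2t)^p\Phi_1\,\ud x\Big)^{\frac1{p-q(p-1)}}.
\]
By \eqref{eq:goal-weighted-up-pointwise}, $\int_\Omega u(x,2t)^p\Phi_1\,\ud x\le C_2^p(-2t)^{\frac{p}{p-1}}$. The resulting power of $(-t)$ is
\[
\frac{1}{p-q(p-1)}\Big(\frac{p}{p-1}-q\Big)=\frac{1}{p-1},
\]
which gives \eqref{eq:poly-bound} for all $t<0$ with $C_\infty$ depending only on $n,p,\Omega$. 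At $t=0$ the bound is trivial, since $u$ vanishes at the extinction time.

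\emph{Main obstacle.} The delicate point is Step 1: justifying that \eqref{eq:goal-weighted-up-pointwise} and the smoothing corollary apply with constants uniform in $u$. Lemma \ref{lm:norm_bounds_basic}'s weighted bound comes from integrating the differential inequality up to the extinction time $0$. This uses $\int u^p\Phi_1\to0$ as $t\to0$, which follows from $u\in C([t_0,0];L^p_{\Phi_1})$ with vanishing at $T^*=0$. It is also important that the differential inequality direction produces an \emph{upper} bound without any assumption on $u$; that is exactly why no limsup hypothesis is needed here, in contrast with Lemma \ref{lm:norm_infty_upper_bound}.
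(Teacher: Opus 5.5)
Your proposal is correct and follows essentially the same route as the paper: apply the $r=p$ weighted smoothing estimate on $[2t,t]$, bound $\int_\Omega u(x,2t)^p\Phi_1\,\mathrm{d}x$ by the weighted upper bound \eqref{eq:goal-weighted-up-pointwise} from Lemma \ref{lm:norm_bounds_basic}, check that the exponents combine to $\frac{1}{p-1}$, and handle $t=0$ via extinction. Your Step 1 (identifying $u$ on $[t_0,t]$ with the limit solution from the bounded datum $u(\cdot,t_0)$) and your remark that the weighted bound uses $\int_\Omega u^p\Phi_1\to 0$ as $t\to 0$ make explicit what the paper leaves implicit.
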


\begin{proof}
In the range $p \in \big(1, \frac{n+1}{(n-1)_+}\big)$, the
$L^p_{\Phi_1}$-$L^\infty$ smoothing effect holds.
It follows from Theorem \ref{thm:weighted-smoothing_1} and Lemma
\ref{lm:norm_bounds_basic} that, for every $t<0$,
\begin{align*}
\|u(\cdot,t)\|_{L^\infty(\Omega)}
\le &
\,C_5\,(t-2t)^{-\frac{q}{p-q(p-1)}}
\Big(
\int_\Omega u(x,2t)^p \,\Phi_1(x)\, \ud x
\Big)^{\frac{1}{p-q(p-1)}} \\
\leq & \,C(-t)^{-\frac{q}{p-q(p-1)}}(-2t)^{\frac{p}{(p-1)[p-q(p-1)]}} \\
\leq & \, C_{\infty} (-t)^{\frac{1}{p-1}}.
\end{align*}
The estimate at \(t=0\) follows from the extinction condition.
This completes the proof.
\end{proof}

Next we shall describe the behavior of ancient solutions to \eqref{eq:ADP} near the boundary.
\begin{thm}\label{thm:boundary-behavior}
Suppose that the conditions of Theorem \ref{thm:asymptotic_behaviour} hold. 
Then, for every $\varepsilon>0$ and
$(x,t) \in \Omega\times (-\infty, -\varepsilon]$,
\begin{equation}\label{eq:boundary-behavior}
C^{-1}(-t)^{\frac{1}{p-1}}  \leq \frac{u(x,t)}{\Phi_1(x)} \leq C(-t)^{\frac{1}{p-1}},
\end{equation}
{Here $C\geq1$ may depend on $\varepsilon$ and $u$.}
{Moreover, if
$p \in \big(1,\frac{n+2}{(n-2)_+}\big)$, then
\eqref{eq:boundary-behavior} holds for all $t<0$ with $C$ independent
of $\varepsilon$.}
\end{thm}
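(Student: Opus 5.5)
Both bounds in \eqref{eq:boundary-behavior} will come from the Green-function representation of Lemma \ref{lem:almost-representation}, applied to the ancient solution on time windows adapted to $t$, together with the $L^\infty$ bounds of Lemmas \ref{lm:norm_infty_upper_bound} and \ref{lem:Lpplus1-growth}. Since $u$ is classical on $\overline\Omega\times(-\infty,0)$, the lemma applies after translating time. The equation is autonomous, so the B\'enilan--Crandall estimate in the form $u_t\le u/((p-1)(t-t_{\rm ref}))$ holds on any interval starting at a reference time $t_{\rm ref}$. Writing the representation with origin $t_{\rm ref}=2t$, say, yields the two-sided comparison with $t_0,t_1$ replaced by $t-2t$ and $t_1-2t$.

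For the upper bound, fix $t\le-\varepsilon$, use the origin $3t$, and take the times $2t<t$. The upper estimate \eqref{eq:up_bound}, with $\|G_\Omega(x,\cdot)\|_{L^1}\le C\Phi_1(x)$, then gives
\[
\frac{u(x,t)}{\Phi_1(x)}\le C\,\frac{\|u(\cdot,2t)\|_{L^\infty}^p}{-t}.
\]
By Lemma \ref{lm:norm_infty_upper_bound} (or Lemma \ref{lem:Lpplus1-growth} in case (i)) we have $\|u(\cdot,2t)\|_\infty\le C_\infty(-2t)^{1/(p-1)}$, so the right-hand side is $\le C(-t)^{p/(p-1)-1}=C(-t)^{1/(p-1)}$. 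The constants depend on $\varepsilon$ and $u$, exactly as in Lemma \ref{lm:norm_infty_upper_bound}. In the subcritical range $p<\frac{n+2}{(n-2)_+}$ that lemma holds for all $t<0$ uniformly, so the upper bound is uniform as well.

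For the lower bound, I would follow Lemma \ref{lm:global-lower-bounds-I} on the window with origin $2t$ and extinction at $0$. Letting $t_1\uparrow0$ and using $G_\Omega(x,y)\ge c_0\Phi_1(x)\Phi_1(y)$ gives
\[
\frac{u(x,t)}{\Phi_1(x)}\ge \frac{C}{-t}\int_\Omega u(y,t)^p\Phi_1(y)\,\ud y.
\]
It then suffices to prove $\int_\Omega u^p\Phi_1\ge c(-t)^{p/(p-1)}$. I would get this as in the proof of Theorem \ref{thm:bhi_special}. First, $\frac{\ud}{\ud t}\int u^p\Phi_1=-\lambda_1\int u\Phi_1$. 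Next, bound $\int u\Phi_1\ge \int u^{r+1}/(\|u\|_\infty^{r-1}\|u/\Phi_1\|_\infty)$ from below, taking $r=p$ so that the lower bound of Lemma \ref{lm:norm_bounds_basic} applies to $\|u\|_{L^{p+1}}$. Finally, use the upper bounds just obtained, $\|u\|_\infty\le C(-s)^{1/(p-1)}$ and $\|u/\Phi_1\|_\infty\le C(-s)^{1/(p-1)}$. Together these give $\frac{\ud}{\ud s}\int u^p\Phi_1\le -c(-s)^{1/(p-1)}$ for $s\in(t,0)$. Integrating from $t$ to $0$ yields $\int u(\cdot,t)^p\Phi_1\ge c(-t)^{p/(p-1)}$, hence the lower bound.

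The main obstacle is this last integration, because it needs the upper bounds on all of $(t,0)$, including near $0$. When $\varepsilon$-dependence is forced (the supercritical cases (ii), (iii)), Lemma \ref{lm:norm_infty_upper_bound} only controls $s\le-\varepsilon$. I would handle this by integrating only over $(t,t/2)$, which lies in $(-\infty,-\varepsilon/2]$ when $t\le-\varepsilon$. Applying the bounds there with $\varepsilon/2$ and using monotonicity of $\int u^p\Phi_1$ still gives $\int u(\cdot,t)^p\Phi_1\ge c(-t)^{p/(p-1)}$. For the subcritical uniform statement, all bounds hold on $(-\infty,0)$, so no such truncation is needed and the constant is independent of $\varepsilon$.
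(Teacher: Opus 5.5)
Your upper bound (Lemma \ref{lem:almost-representation} on the window with origin $3t$ and times $2t<t$) matches the paper's argument. So does your lower-bound reduction: Lemma \ref{lm:global-lower-bounds-I} on the window with origin $2t$, then integrating the inequality for $\int_\Omega u^p\Phi_1$ only over $(t,t/2)$ using the $\varepsilon/2$ bounds. The gap is the choice $r=p$ in the interpolation $\int_\Omega u\Phi_1\,\mathrm{d}x\geq \|u\|_{L^{r+1}}^{r+1}/(\|u\|_{L^\infty}^{r-1}\|u/\Phi_1\|_{L^\infty})$. Lemma \ref{lm:norm_bounds_basic} at exponent $p+1$ requires $p+1\geq\frac{n(p-1)}{2}$, i.e.\ $p\leq\frac{n+2}{n-2}$ when $n\geq3$, because its proof uses the embedding $H^1_0(\Omega)\hookrightarrow L^{p+1}(\Omega)$. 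Conditions \textup{(ii)} and \textup{(iii)} of Theorem \ref{thm:asymptotic_behaviour} allow $p>\frac{n+2}{n-2}$. This regime is not vacuous: on an annulus \eqref{eq:elliptic-equation} has positive solutions, hence there are separable ancient solutions. For such $p$ your claimed bound $\|u(\cdot,s)\|_{L^{p+1}}\geq C(-s)^{1/(p-1)}$ is not justified; for general solutions it is even false, by scaling with concentrated data. So the inequality $\frac{\mathrm{d}}{\mathrm{d}s}\int_\Omega u^p\Phi_1\leq -c(-s)^{1/(p-1)}$ is unproved precisely in the regime you single out as forcing $\varepsilon$-dependence.

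The repair is what the paper does. Run the same interpolation with the exponent $r$ from condition \textup{(ii)} or \textup{(iii)}; under \textup{(i)}, take any fixed $r>p$ with $r+1\geq\frac{n(p-1)}{2}$. From \eqref{eq:initial_datum_range_1} or \eqref{eq:initial_datum_range_2} one checks $r\geq p>1$, $r+1>p$ and $r+1\geq\frac{n(p-1)}{2}$, so Lemma \ref{lm:norm_bounds_basic} applies at exponent $r+1$. The exponent count $\frac{(r+1)-(r-1)-1}{p-1}=\frac{1}{p-1}$ does not depend on $r$, so the rest of your argument goes through unchanged. Alternatively, you can keep $r=p$ after deriving $\|u\|_{L^{p+1}}^{p+1}\geq \|u\|_{L^{r+1}}^{r+1}\|u\|_{L^\infty}^{p-r}$ from the $L^{r+1}$ lower bound and the $L^\infty$ upper bound. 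In the Sobolev-subcritical range your choice $r=p$ is legitimate, so your proof of the uniform statement there is fine as written.
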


\begin{proof}
Fix $\varepsilon>0$ and use the estimates above with $\varepsilon/2$ in place of $\varepsilon$.
We first consider the upper bounds.  For $t\leq-\varepsilon$, by
Lemma \ref{lem:almost-representation}, applied to the time-shifted
solution $u(x,3t+\tau)$ between the shifted times $-t$ and $-2t$,
and the estimate
$\|G_{\Omega}(x, \cdot) \|_{L^1(\Omega)} \leq C \Phi_1(x)$, we have
\begin{align*}
\frac{u(x,t)}{\Phi_1(x)} 
\leq &\,\frac{C p}{p-1} \frac{(-2t)^{1/(p-1)}}{(-2t)^{p/(p-1)}-(-t)^{p/(p-1)}} \|u(\cdot,2t)\|_{L^{\infty}(\Omega)}^p  \\
\leq &\,C \frac{(-2t)^{p/(p-1)}}{-t} = C(-t)^{\frac{1}{p-1}},
\end{align*}
where we have used Lemma \ref{lm:norm_infty_upper_bound} and Lemma
\ref{lem:Lpplus1-growth} in the second inequality.

Now we consider the lower bounds.
Choose $r$ as in condition \textup{(ii)} or \textup{(iii)} of Theorem
\ref{thm:asymptotic_behaviour}; {under condition
\textup{(i)}, fix an exponent $r>p$, depending only on $n$ and $p$,
such that $r+1\geq n(p-1)/2$.}
Arguing as in Lemma \ref{lm:norm_bounds_basic}, for any
$t \in (-\infty, -\varepsilon]$, we have
\begin{align*}
\frac{\ud}{\ud t} \int_\Omega u(x,t)^p\Phi_1(x)\, \mathrm{d}x
= & -\lambda_1 \int_\Omega u(x,t)\Phi_1(x)\, \mathrm{d}x 
=  -\lambda_1 \int_\Omega \frac{u(x,t)^{r+1}}{u(x,t)^{r-1} 
\frac{u(x,t)}{\Phi_1(x)}} \mathrm{d}x \\
\leq & -\lambda_1 \frac{\|u(\cdot, t)\|_{L^{r+1}(\Omega)}^{r+1}}{\|u(\cdot, t)\|_{L^{\infty}(\Omega)}^{r-1}\|\frac{u(\cdot,t)}{\Phi_1}\|_{L^{\infty}(\Omega)}} \\
\leq & -C\frac{(-t)^{\frac{r+1}{p-1}}}{(-t)^{\frac{r-1}{p-1}}(-t)^{\frac{1}{p-1}}}
= -C(-t)^{\frac{1}{p-1}},
\end{align*}
{Here the last inequality follows from Lemmas
\ref{lm:norm_bounds_basic}, \ref{lm:norm_infty_upper_bound}, and
\ref{lem:Lpplus1-growth}, with Lemma \ref{lm:norm_bounds_basic}
applied at exponent $r+1$.}
The estimates used in the preceding differential inequality hold throughout $[t,t/2]$, because they were taken with $\varepsilon/2$.  Integrating from $t$ to $t/2$, for every $t\leq-\varepsilon$ we obtain
\begin{align*}
\int_\Omega u(x,t)^p\Phi_1(x)\, \mathrm{d}x
&\geq \int_\Omega u(x,t/2)^p\Phi_1(x)\, \mathrm{d}x
+C\int_t^{t/2}(-\tau)^{\frac1{p-1}}\,\mathrm d\tau
\geq C(-t)^{\frac{p}{p-1}}.
\end{align*}
Finally, apply Lemma \ref{lm:global-lower-bounds-I} to the shifted solution $u(x,2t+\tau)$, whose shifted extinction time is $-2t$, at the shifted time $-t$.  We obtain
\begin{equation*}
\frac{u(x,t)}{\Phi_1(x)} 
\geq C(-t)^{-1}(-2t)^{\frac{p}{p-1}} = C(-t)^{\frac{1}{p-1}}.
\end{equation*}
{If $p \in \big(1,\frac{n+2}{(n-2)_+}\big)$, the last
estimate in Lemma \ref{lm:norm_infty_upper_bound} allows us to repeat
the argument for every $t<0$.}
This completes the proof.
\end{proof}

By a straightforward computation, we obtain the following estimates.
\begin{thm}\label{thm:boundary_estimate_v}
Let $T >0$.
Suppose that the conditions of Theorem \ref{thm:asymptotic_behaviour} hold. 
Then, for every $s \in (-\infty, T]$,
\begin{equation}\label{eq:boundary-behavior-of-v}
C^{-1}  \leq \frac{v(x,s)}{\Phi_1(x)} \leq C,\quad \forall x \in \Omega.
\end{equation} 
{Here $C>0$ may depend on $T$ and $u$.}
{Furthermore, if
$p \in \big(1,\frac{n+2}{(n-2)_+}\big)$, then
\eqref{eq:boundary-behavior-of-v} holds for all $s\in\mathbb R$ with
a constant independent of $T$.}
\end{thm}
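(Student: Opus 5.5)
The estimate follows by rewriting Theorem~\ref{thm:boundary-behavior} in the rescaled variables \eqref{eq:v_transform}. By definition,
\[
\frac{v(x,s)}{\Phi_1(x)}=\Big[\frac{p}{(p-1)}\Big]^{\frac1{p-1}}(-t)^{-\frac1{p-1}}\,\frac{u(x,t)}{\Phi_1(x)},\qquad -t=e^{-\frac{p-1}{p}s}.
\]
The condition $s\le T$ is equivalent to $-t\ge e^{-\frac{p-1}{p}T}=:\varepsilon_T>0$, that is, to $t\le-\varepsilon_T$.

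\emph{First assertion.} We apply Theorem~\ref{thm:boundary-behavior} with $\varepsilon=\varepsilon_T$. For every $t\le-\varepsilon_T$ and every $x\in\Omega$ it gives
\[
C^{-1}(-t)^{\frac1{p-1}}\le \frac{u(x,t)}{\Phi_1(x)}\le C(-t)^{\frac1{p-1}}.
\]
Multiplying by $[p/(p-1)]^{1/(p-1)}(-t)^{-1/(p-1)}$ cancels the power of $-t$ exactly. This yields \eqref{eq:boundary-behavior-of-v} for all $s\le T$, with constant $C[p/(p-1)]^{1/(p-1)}$. That constant depends on $T$ through $\varepsilon_T$, and on $u$ as in Theorem~\ref{thm:boundary-behavior}.

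\emph{Second assertion.} Now let $p\in(1,\frac{n+2}{(n-2)_+})$. The last part of Theorem~\ref{thm:boundary-behavior} gives the two-sided bound for all $t<0$, with a constant independent of $\varepsilon$. The same multiplication then gives \eqref{eq:boundary-behavior-of-v} for every $s\in\mathbb R$, with a constant independent of $T$.

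The only point to check is that "for every $x\in\Omega$" holds pointwise rather than almost everywhere. This is immediate because $u$, and hence $v$, is a classical $C^2$ solution, so the a.e.\ bound extends to all $x$ by continuity.
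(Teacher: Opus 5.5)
Your proposal is correct and is essentially the paper's argument: the paper derives this theorem from Theorem~\ref{thm:boundary-behavior} by the same ``straightforward computation'' in the variables \eqref{eq:v_transform}. There $s\le T$ corresponds to $t\le -e^{-\frac{p-1}{p}T}$, and the factor $(-t)^{\frac1{p-1}}$ cancels. Since $[p/(p-1)]^{1/(p-1)}>1$, your single constant $C[p/(p-1)]^{1/(p-1)}$ indeed serves for both the upper and the lower bound.
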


\begin{rem}\label{rmk:Higher_order_regullarity}
Using Theorem \ref{thm:boundary_estimate_v} and the method of
\cite{JX23,JX25}, for every $\ell\in\mathbb Z_{\geq0}$ and
$T\in\mathbb R_-$ we obtain
\begin{equation}\label{eq:unif-window-nonint}
\big\| d^{-1}\partial_s^\ell v \big\|_{L^\infty(\Omega\times[T-1,T])}
+\sup_{s\in[T-1,T]}\big\|\partial_s^\ell v(\cdot,s)\big\|_{C^{3}(\overline\Omega)}
\le C,
\end{equation}
where $d(x)=\operatorname{dist}(x,\partial\Omega)$.
The constant $C$ may depend on $\ell$ and $u$, but is independent of
$T\in\mathbb R_-$.
\end{rem}

From now on, we suppose that all assumptions in Theorem \ref{thm:asymptotic_behaviour} hold unless otherwise stated.
We then prove the energy estimate for solutions to \eqref{eq:Equation_of_v}.
\begin{lem}\label{thm;EnergyEstimate}
Let $ v \in C^{2}(\overline{\Omega} \times (-1, 0]) $ be a positive solution to \eqref{eq:Equation_of_v}.
For  $ -1 < -s_0 < -s_1 < 0 $, there exists a constant $ C $ depending only on $ p $, such that
\begin{equation}\label{equ;EnergyEstimate0-1}
\sup_{s \in (-s_1, 0]} \int_{\Omega} v^{p+1}(x,s) \,\mathrm{d}x + \iint_{Q_1} |\nabla v|^2 \,\ud x\,\ud s \leq \frac{C}{s_0 - s_1} \iint_{Q_0} v^{p+1} \,\ud x\,\ud s,
\end{equation}
where $ Q_i := \Omega \times (-s_i, 0] $ for $ i = 0,1 $.
\end{lem}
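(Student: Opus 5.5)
We prove this with a time-localized energy identity, in the same way as Lemma \ref{lem:energy_estimate} but for the rescaled equation \eqref{eq:Equation_of_v} and with test function $\eta^2 v$ in place of $\eta^2 u^\theta\Phi_1$. Let $\eta\in C^\infty(\mathbb R)$ satisfy $0\le\eta\le1$, $\eta=0$ on $(-\infty,-s_0]$, $\eta=1$ on $[-s_1,\infty)$, and $|\eta'|\le 2/(s_0-s_1)$. Since $v\in C^2(\overline\Omega\times(-1,0])$ is positive and vanishes on $\partial\Omega$, we may multiply \eqref{eq:Equation_of_v} by $\eta^2 v$ and integrate over $\Omega\times(-s_0,s]$ for $s\in(-s_1,0]$; the boundary terms vanish because $v=0$ on $\partial\Omega$.

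\emph{The identity.} We have $v\,\partial_s v^p=\frac{p}{p+1}\partial_s v^{p+1}$, and $\int\Delta v\cdot v=-\int|\nabla v|^2$. Integrating by parts in time therefore gives
\begin{align*}
\frac{p}{p+1}\int_\Omega \eta^2 v^{p+1}(x,s)\,\ud x
+\int_{-s_0}^{s}\!\int_\Omega \eta^2|\nabla v|^2\,\ud x\,\ud\tau
&=\frac{2p}{p+1}\int_{-s_0}^{s}\!\int_\Omega \eta\eta' v^{p+1}\,\ud x\,\ud\tau\\
&\quad+\int_{-s_0}^{s}\!\int_\Omega \eta^2 v^{p+1}\,\ud x\,\ud\tau .
\end{align*}
The first term on the right is bounded by $\frac{4p}{(p+1)(s_0-s_1)}\iint_{Q_0}v^{p+1}$.

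\emph{The reaction term.} The term $\iint \eta^2 v^{p+1}$, which comes from the $v^p$ in \eqref{eq:Equation_of_v}, has a favorable structure only in the sense that it is bounded by $\iint_{Q_0}v^{p+1}$. Because $0<s_0-s_1<1$, we have $1\le 1/(s_0-s_1)$, so this term is also at most $\frac{1}{s_0-s_1}\iint_{Q_0}v^{p+1}$. This is exactly why the lemma restricts to $-1<-s_0$: the time window has length less than one. With the reaction term absorbed this way, the right-hand side is at most $\frac{C(p)}{s_0-s_1}\iint_{Q_0}v^{p+1}$.

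\emph{Taking the supremum.} On $[-s_1,0]$ we have $\eta=1$. Taking $s=0$ controls $\iint_{Q_1}|\nabla v|^2$, and taking the supremum over $s\in(-s_1,0]$ controls $\sup\int v^{p+1}$, since the right-hand side is monotone in $s$ and bounded by the $Q_0$ integral. Adding the two bounds gives \eqref{equ;EnergyEstimate0-1} with $C$ depending only on $p$.

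We expect no real obstacle here. The only points requiring care are that the reaction term is absorbed using $s_0-s_1<1$, and that the identity is justified by the $C^2$ regularity up to the boundary.
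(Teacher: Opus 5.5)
Your proof is correct and is essentially the paper's argument: the paper also tests \eqref{eq:Equation_of_v} against $v\psi^2$ with the same cutoff and leaves the computation as routine, and your absorption of the reaction term using $s_0-s_1<1$ is exactly what makes $C$ depend only on $p$. One small point: the claim that the right-hand side is monotone in $s$ is unnecessary, and it would need $\eta$ to be nondecreasing; bounding it in absolute value by $\iint_{Q_0}v^{p+1}$ already works for every $s\in(-s_1,0]$.
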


\begin{proof}
Let $\psi$ be a smooth cutoff function satisfying $0\leq\psi\leq1$, $\psi=0$ on $(-\infty,-s_0]$, $\psi=1$ on $[-s_1,0]$, and $|\psi'(s)| \le \frac{2}{s_0-s_1}$.
The proof is completed by testing \eqref{eq:Equation_of_v} against $v\psi^2$ and carrying out a routine computation.
\end{proof}

By Theorem \ref{thm:boundary_estimate_v} and the preceding lemma, we
obtain the following corollary.
\begin{cor}\label{cor;energyEstimate}
Let $ v \in C^{2}(\overline{\Omega} \times \mathbb{R}) $ be a positive solution to \eqref{eq:Equation_of_v}.
For every $k \in \mathbb{N}$,
\[
\int_{-k+\frac{1}{2}}^{-k+1}\!\int_\Omega |\nabla v(x,s)|^2 \,\mathrm{d}x\,\mathrm{d}s 
\le C \int_{-k}^{-k+1}\!\int_\Omega v^{p+1}(x,s) \,\mathrm{d}x\,\mathrm{d}s \le C,
\]
where $C=C(n,p,\Omega,C_\infty)>0$ is independent of $k$,
and $C_\infty$ is given by \eqref{eq:L_infty_up} with
$\varepsilon=1$ or by
\eqref{eq:poly-bound}.
\end{cor}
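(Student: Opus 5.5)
The corollary follows by applying Lemma~\ref{thm;EnergyEstimate} on unit time windows, after a time translation. We then bound the right-hand side by the uniform pointwise control on $v$. Equation \eqref{eq:Equation_of_v} is autonomous in $s$, so for each $k\in\mathbb N$ the shifted function $\tilde v(x,s):=v(x,s-k+1)$ is again a positive solution of \eqref{eq:Equation_of_v} of class $C^2(\overline\Omega\times(-1,0])$. We apply Lemma~\ref{thm;EnergyEstimate} to $\tilde v$ with $s_1=\tfrac12$ and $s_0$ slightly less than $1$, for instance $s_0=\tfrac34$ (or let $s_0\uparrow1$, which is allowed since the constant only involves $s_0-s_1$). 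Dropping the supremum term and shifting back gives
\[
\int_{-k+\frac12}^{-k+1}\!\int_\Omega|\nabla v|^2\,\ud x\,\ud s\le \frac{C(p)}{s_0-s_1}\int_{-k+1-s_0}^{-k+1}\!\int_\Omega v^{p+1}\,\ud x\,\ud s\le C\int_{-k}^{-k+1}\!\int_\Omega v^{p+1}\,\ud x\,\ud s .
\]
The last step uses only $v>0$ to enlarge the time interval. This is the first inequality, with a constant depending only on $p$.

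For the second inequality we need $\sup_{s\le0}\|v(\cdot,s)\|_{L^\infty(\Omega)}\le C$ with $C$ independent of $k$. We translate \eqref{eq:L_infty_up} (with $\varepsilon=1$) or \eqref{eq:poly-bound} through \eqref{eq:v_transform}:
\[
v(x,s)=\Big[\frac{p}{(p-1)(-t)}\Big]^{\frac1{p-1}}u(x,t)\le \Big(\frac{p}{p-1}\Big)^{\frac1{p-1}}C_\infty .
\]
This holds for all $t\le-1$, that is, for $s\le 0$ because $s=\frac{p}{p-1}\ln(1/(-t))$. Under the sharper bound \eqref{eq:poly-bound}, it holds for all $s$. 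Since $k\ge1$, the window $[-k,-k+1]$ lies in $s\le0$. Hence
\[
\int_{-k}^{-k+1}\!\int_\Omega v^{p+1}\,\ud x\,\ud s\le |\Omega|\Big(\frac{p}{p-1}\Big)^{\frac{p+1}{p-1}}C_\infty^{p+1},
\]
which depends only on $n,p,\Omega,C_\infty$. Alternatively, Theorem~\ref{thm:boundary_estimate_v} with $T=0$ gives $v\le C\Phi_1$ directly.

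The only point needing care is that Lemma~\ref{thm;EnergyEstimate} is stated on $(-1,0]$ with $-1<-s_0$ strictly. This is why we take $s_0<1$ and then enlarge the domain of the right-hand integral by positivity. Everything else is bookkeeping of the change of variables.
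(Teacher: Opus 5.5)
Your proposal is correct and follows the paper's route. You apply Lemma \ref{thm;EnergyEstimate} on time-translated unit windows, taking $s_0<1$ and enlarging the interval by positivity, which is exactly what is needed, and then bound $\iint v^{p+1}$ by a uniform bound on $v$. The paper cites Theorem \ref{thm:boundary_estimate_v} for that bound, whereas your direct translation of \eqref{eq:L_infty_up} or \eqref{eq:poly-bound} through \eqref{eq:v_transform} uses the same ingredient and makes the dependence on $C_\infty$ alone explicit.
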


We define the energy functional $F: H^1_0(\Omega)\cap L^{p+1}(\Omega) \to \mathbb{R}$ by
\begin{equation}\label{def:energy-functional}
F[v] = \int_\Omega |\nabla v|^2 \,\mathrm{d}x - \frac{2}{p+1} \int_\Omega v^{p+1} \,\mathrm{d}x.
\end{equation}
It is known that for $p \in \big(1, \frac{n+2}{(n-2)_+}\big)$, all critical points of $F$ are precisely the weak solutions to \eqref{eq:elliptic-equation}.

\begin{lem}\label{lem:energy-monotonicity-bounds}
Let $ v \in C^{2}(\overline{\Omega} \times \mathbb{R}) $ be a positive solution to \eqref{eq:Equation_of_v}. Then $F[v(\cdot, s)]$ is monotonically nonincreasing with respect to $s$.
Furthermore, there exists
$C=C(n,p,\Omega,C_\infty)>0$ such that
$|F[v(\cdot,s)]|\leq C$ for all $s\in\mathbb R_-$, and the limit
$
F_{- \infty} := \lim\limits_{s\to - \infty} F[v(\cdot, s)]
$
exists.
\end{lem}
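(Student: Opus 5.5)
We prove the monotonicity by differentiating $F[v(\cdot,s)]$ along the flow \eqref{eq:Equation_of_v}. By Remark \ref{rmk:Higher_order_regullarity}, $v(\cdot,s)$ and $\partial_s v(\cdot,s)$ belong to $C^3(\overline\Omega)$, vanish on $\partial\Omega$, and satisfy $|\partial_s v|\le Cd$. So differentiation under the integral and integration by parts are justified, and boundary terms vanish. We get
\[
\frac{\ud}{\ud s}F[v(\cdot,s)]
=2\int_\Omega\nabla v\cdot\nabla\partial_s v\,\ud x-2\int_\Omega v^p\partial_s v\,\ud x
=-2\int_\Omega(\Delta v+v^p)\,\partial_s v\,\ud x .
\]
The equation gives $\Delta v+v^p=\partial_s v^p=pv^{p-1}\partial_s v$, hence
\[
\frac{\ud}{\ud s}F[v(\cdot,s)]=-2p\int_\Omega v^{p-1}(\partial_s v)^2\,\ud x\le0 .
\]
This is the monotonicity. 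The integrand is bounded near the boundary, since $v\le C d$ and $|\partial_s v|\le Cd$, so the computation is legitimate for every $s$.

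For the uniform bound, we use Lemma \ref{lm:norm_infty_upper_bound} with $\varepsilon=1$, or Lemma \ref{lem:Lpplus1-growth}. In the $v$-variables this gives $\|v(\cdot,s)\|_{L^\infty(\Omega)}\le C(p)C_\infty$ for all $s\le0$. Note that $t\le-1$ corresponds to $s\le0$ in \eqref{eq:v_transform}, and the factor $(p/((p-1)(-t)))^{1/(p-1)}$ exactly cancels $(-t)^{1/(p-1)}$. Hence $\int_\Omega v^{p+1}\,\ud x\le C$ uniformly for $s\le0$. The gradient term is not controlled pointwise in time by the earlier results, so we argue through monotonicity.

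For the upper bound on $F$, we use that $F$ is nonincreasing, so $F[v(\cdot,s)]\le F[v(\cdot,\sigma)]$ for every $\sigma\in[s-1,s]$. Averaging over such $\sigma$ and applying Corollary \ref{cor;energyEstimate} bounds the average of $\int|\nabla v|^2$ over a half-unit window $[-k+\tfrac12,-k+1]\ni$ near $s$. Choosing $k$ with $s\in[-k+1,-k+\tfrac32]$, the window $[-k+\tfrac12,-k+1]$ lies in $[s-1,s]$. This yields $F[v(\cdot,s)]\le 2\int_{-k+1/2}^{-k+1}F[v(\cdot,\sigma)]\,\ud\sigma\le C$.

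For the lower bound, $F[v(\cdot,s)]\ge-\frac{2}{p+1}\int_\Omega v^{p+1}\,\ud x\ge -C$, since the Dirichlet term is nonnegative. Thus $|F[v(\cdot,s)]|\le C$ for all $s\le0$, with $C$ depending only on $n,p,\Omega,C_\infty$.

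Finally, $s\mapsto F[v(\cdot,s)]$ is nonincreasing and bounded as $s\to-\infty$, so the limit $F_{-\infty}$ exists. The main point requiring care is the upper bound on $F$: the gradient is only controlled in a time-averaged sense, and monotonicity is what converts that averaged control into a pointwise-in-time bound. The regularity justification for differentiating under the integral near $\partial\Omega$ is routine given Remark \ref{rmk:Higher_order_regullarity}.
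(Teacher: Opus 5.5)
Your proof is correct and follows the paper's argument: the same energy identity $\frac{\ud}{\ud s}F[v(\cdot,s)]=-2p\int_\Omega v^{p-1}|\partial_s v|^2\,\ud x$, the lower bound from a uniform bound on $\int_\Omega v^{p+1}\,\ud x$, and the upper bound from the time-averaged gradient control in Corollary \ref{cor;energyEstimate} combined with monotonicity (the paper picks a good time $s_k\in(-k+\frac12,-k+1]$ by the mean value theorem, which is equivalent to your averaging of $F$). One small slip is that an integer $k$ with $s\in[-k+1,-k+\frac32]$ need not exist (for example $s=-\frac14$); you do not need the window to lie in $[s-1,s]$, because monotonicity gives $F[v(\cdot,s)]\le F[v(\cdot,\sigma)]$ for every $\sigma\le s$, so any $k\in\mathbb N$ with $-k+1\le s$ works.
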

\begin{proof}
We first establish the monotonicity.
Using integration by parts, we obtain
\begin{equation}\label{eq:energy-derivative-identity}
\begin{split}
\frac{\mathrm{d}}{\mathrm{d}s} F[v(\cdot, s)]
= & \,2 \int_\Omega \big( \nabla v \cdot \nabla v_s - v^p v_s \big) \,\mathrm{d}x \\
= & -\!2 \int_\Omega v_s (\Delta v + v^p) \,\mathrm{d}x
= -2p \int_\Omega v^{p-1} |\partial_s v|^2 \,\mathrm{d}x \leq 0.
\end{split}    
\end{equation}
Thus, $F[v(\cdot, s)]$ is nonincreasing with respect to $s$.

Next, we shall derive the uniform bounds.
The lower bound is easily established by combining the transformation \eqref{eq:v_transform} with Theorem \ref{thm:boundary_estimate_v}.
Precisely, we have
\begin{equation}\label{eq:energy-lower-bound}
F[v(\cdot, s)] \ge - \frac{2}{p+1} \int_\Omega v^{p+1} \,\mathrm{d}x \ge -\frac{C }{p+1}, \qquad \forall\, s \in \mathbb{R}_{-}.
\end{equation}
Then we shall establish the upper bound. 
By the monotonicity of $F$, it suffices to show that there exists a sequence $\{s_k\}$ such that $s_k \to -\infty$ as $k \to \infty$ and
\begin{equation}\label{eq:energy-at-tk}
F[v(\cdot, s_k)] \leq C,\quad \forall\,k \in \mathbb{N}.
\end{equation}
For any $k\in\mathbb{N}$, it follows from Corollary~\ref{cor;energyEstimate} and the mean value theorem that there exists $s_k \in (-k+\tfrac{1}{2}, -k+1]$ such that 
\begin{equation}\label{eq:gradient-at-tk}
\int_\Omega |\nabla v(x,s_k)|^2 \,\mathrm{d}x \le 2C,
\end{equation}
where $C>0$ is a constant independent of $k$.
Clearly, $s_k \to -\infty$ as $k \to \infty$.
By \eqref{eq:gradient-at-tk}, we obtain \eqref{eq:energy-at-tk}.
The existence of $F_{-\infty}$ now follows from the monotonicity and
the lower bound \eqref{eq:energy-lower-bound}.
\end{proof}

\begin{rem}
If $p \in (1, \frac{n+2}{(n-2)_+}\big)$, by a similar argument, the monotonicity and uniform bounds of $F[v(\cdot, s)]$ hold for all $s \in \mathbb{R}$.
As a result, the limit $F_{+\infty}:=\lim\limits_{s\to + \infty} F[v(\cdot, s)]$ exists and is finite.
\end{rem}

\subsection{Asymptotic Behavior}

\begin{lem}\label{lm:sequence_to_-infty}
There exists a sequence $\{s_k\}$ such that $s_k \to -\infty$ as $k \to +\infty$ and
\begin{equation}\label{eq:asymptotic_behavior}
v(\cdot, s_k) \rightarrow v_{-\infty} ~~ \text{in } C^2(\overline{\Omega}) \quad \text{as } k \to +\infty,
\end{equation}
where $v_{-\infty}$ is a positive solution to \eqref{eq:elliptic-equation}.
\end{lem}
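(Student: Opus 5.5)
We will combine compactness from the uniform higher-order bounds with the energy dissipation identity to select a sequence along which the time derivative vanishes.

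First, integrating the identity \eqref{eq:energy-derivative-identity} over $(-\infty,0]$ and using Lemma \ref{lem:energy-monotonicity-bounds} (the energy is monotone and bounded on $\mathbb R_-$), we get
\[
2p\int_{-\infty}^{0}\!\int_\Omega v^{p-1}|\partial_s v|^2\,\ud x\,\ud s = F[v(\cdot,0)]-F_{-\infty}<\infty .
\]
Hence $\int_{-k-1}^{-k}\!\int_\Omega v^{p-1}|\partial_s v|^2\,\ud x\,\ud s\to0$ as $k\to\infty$. By the mean value theorem we choose $s_k\in[-k-1,-k]$ with $\int_\Omega v^{p-1}(x,s_k)|\partial_s v(x,s_k)|^2\,\ud x\to0$.

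Next, we extract a convergent subsequence. By Remark \ref{rmk:Higher_order_regullarity}, with $\ell=0$ and $\ell=1$, the family $\{v(\cdot,s_k)\}$ is bounded in $C^3(\overline\Omega)$, uniformly in $k$, and so is $\{\partial_s v(\cdot,s_k)\}$. By Arzel\`a--Ascoli, after passing to a subsequence, $v(\cdot,s_k)\to v_{-\infty}$ in $C^2(\overline\Omega)$ and $\partial_s v(\cdot,s_k)\to w$ uniformly. Theorem \ref{thm:boundary_estimate_v}, with $T=0$, gives $C^{-1}\Phi_1\le v(\cdot,s_k)\le C\Phi_1$ with $C$ independent of $k$. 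Hence $v_{-\infty}\ge C^{-1}\Phi_1>0$ in $\Omega$ and $v_{-\infty}=0$ on $\partial\Omega$.

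Then we identify the equation for the limit. Since $v^{p-1}(\cdot,s_k)\ge c\,\Phi_1^{p-1}$, Fatou's lemma gives $\int_\Omega\Phi_1^{p-1}w^2\,\ud x=0$, so $w\equiv0$. Expanding $\partial_s v^p=pv^{p-1}\partial_s v$, the equation \eqref{eq:Equation_of_v} at $s=s_k$ reads
\[
\Delta v(\cdot,s_k)+v(\cdot,s_k)^p=p\,v(\cdot,s_k)^{p-1}\partial_s v(\cdot,s_k).
\]
The right side is bounded by $C\|\partial_s v(\cdot,s_k)\|_\infty\to0$ uniformly. Passing to the limit in $C^2$ yields $-\Delta v_{-\infty}=v_{-\infty}^p$, so $v_{-\infty}$ is a positive solution of \eqref{eq:elliptic-equation}.

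The main obstacle is the uniformity in $k$ of the regularity and two-sided boundary bounds. Without it, the limit could degenerate to zero or lose its $C^2$ compactness. Remark \ref{rmk:Higher_order_regullarity} and Theorem \ref{thm:boundary_estimate_v}, both uniform for $s\le0$, are exactly what supply this.
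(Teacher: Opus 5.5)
Your proof is correct, but it is organized differently from the paper's.

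\textbf{The paper's route.} It first uses the compactness in Remark \ref{rmk:Higher_order_regullarity} to take an arbitrary sequence with $v(\cdot,s_k)\to v_{-\infty}$ in $C^2(\overline{\Omega})$; the limit is nontrivial by \eqref{eq:r_norm_lower_bound}. It then applies Cauchy--Schwarz to \eqref{eq:energy-derivative-identity} to get
$\|v^{\frac{p+1}{2}}(\cdot,s_k+s)-v^{\frac{p+1}{2}}(\cdot,s_k)\|_{L^2(\Omega)}^2\le C(-s)\bigl(F[v(\cdot,s_k+s)]-F[v(\cdot,s_k)]\bigr)\to0$.
Interpolation upgrades this to $C^2$ convergence locally uniformly in $s$. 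Finally, the paper tests the equation on $\Omega\times[s_k-1,s_k]$ against $\phi\in C_c^\infty(\Omega)$ to obtain the weak form of \eqref{eq:elliptic-equation}.

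\textbf{Your route.} You use the finiteness of the total dissipation up front. The mean value theorem selects times at which the instantaneous dissipation $\int_\Omega v^{p-1}|\partial_sv|^2\,\ud x$ tends to zero. The case $\ell=1$ of Remark \ref{rmk:Higher_order_regullarity} turns this into $\partial_sv(\cdot,s_k)\to0$ uniformly, so the equation passes to the limit pointwise. Positivity comes directly from the uniform two-sided bound of Theorem \ref{thm:boundary_estimate_v}.

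\textbf{Comparison.} Your argument is shorter and avoids the interpolation and weak-formulation steps. The cost is that it needs uniform control of $\partial_sv$ and yields only a specially selected sequence. The paper's argument shows slightly more: every backward limit point of the trajectory is a steady state, with convergence uniform on bounded time windows, using only compactness of $v$ itself. Either version suffices for the lemma and for the {\L}ojasiewicz--Simon argument that follows.

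\textbf{Two harmless slips.} Since $F$ is nonincreasing, the identity should read $2p\int_{-\infty}^0\!\int_\Omega v^{p-1}|\partial_sv|^2\,\ud x\,\ud s=F_{-\infty}-F[v(\cdot,0)]$. Also, Theorem \ref{thm:boundary_estimate_v} is stated for $T>0$, so apply it with, say, $T=1$.
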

\begin{proof}
By Remark \ref{rmk:Higher_order_regullarity} and
\eqref{eq:r_norm_lower_bound}, there exist a nontrivial nonnegative
function $v_{-\infty}$ and a sequence
$\{s_k\}\subset \mathbb{R}$ such that $s_k \to -\infty$ and
$v(\cdot, s_k) \to v_{-\infty}$ in $C^2(\overline{\Omega})$ as $k \to +\infty$.
Now we shall show that $v_{-\infty}$ is a positive solution to \eqref{eq:elliptic-equation}.
Using \eqref{eq:energy-derivative-identity} and the Cauchy-Schwarz inequality, we obtain that for any $s \in \mathbb{R}_{-}$,
\begin{equation*}
\begin{split}
\int_{\Omega} |v^{\frac{p+1}{2}}(x, s_k+s)-v^{\frac{p+1}{2}}(x, s_k)|^2 \,\ud x 
= & \int_{\Omega} \Big(\int_{s_k+s}^{s_k}\partial_s v^{\frac{p+1}{2}}\,\ud s\Big)^2\,\ud x \\
\leq & \int_{\Omega} (-s) \int_{s_k+s}^{s_k}  |\partial_s v^{\frac{p+1}{2}}|^2 \,\ud s\,\ud x \\
= & \,\frac{(p+1)^2}{4}\int_{\Omega} (-s) \int_{s_k+s}^{s_k}  |v^{\frac{p-1}{2}}\partial_s v|^2 \,\ud s\,\ud x \\
= & \,\frac{(p+1)^2(-s)}{8p}\Big(F[v(\cdot, s_k+s)]-F[v(\cdot, s_k)]\Big).
\end{split}
\end{equation*}
Then by Lemma \ref{lem:energy-monotonicity-bounds}, we have
\begin{equation*}
\int_{\Omega} |v^{\frac{p+1}{2}}(\cdot, s_k+s)-v_{-\infty}^{\frac{p+1}{2}}|^2 \,\ud x \rightarrow 0
\end{equation*}
locally uniformly in $s$ as $k \to +\infty$.
By using the inequality $|a-b|^{\frac{p+1}{2}} \leq |a^{\frac{p+1}{2}}-b^{\frac{p+1}{2}}|$ for $a,b \geq 0$, we get
\begin{equation*}
v(\cdot, s_k+s) \rightarrow v_{-\infty} ~~ \text{in } L^{p+1}(\Omega)
\end{equation*}
locally uniformly in $s$ as $k \to +\infty$.
It follows from Remark \ref{rmk:Higher_order_regullarity}
and the interpolation inequality on page 126 of Nirenberg \cite{N59} that
\begin{equation*}
v(\cdot, s_k+s) \rightarrow v_{-\infty} ~~ \text{in } C^{2}(\overline{\Omega})
\end{equation*}
locally uniformly in $s$ as $k \to +\infty$.
Testing the equation \eqref{eq:Equation_of_v} on $\Omega \times [s_k-1, s_k]$ by $\phi \in C_c^\infty(\Omega)$, we have
\begin{equation*}
\begin{split}
&\int_\Omega \Big(v^p(\cdot, s_k) - v^p(\cdot, s_{k}-1) \Big)\phi \,\mathrm{d}x \\
= &  -\int_{-1}^{0} \int_\Omega \nabla v(\cdot, s_k + s) \cdot \nabla \phi \,\mathrm{d}x\,\mathrm{d}s + \int_{-1}^{0} \int_\Omega v^p(\cdot, s_k + s)\,\phi \,\mathrm{d}x\,\mathrm{d}s \\
= &-\int_\Omega \nabla v(\cdot, s_k + \bar{s}) \cdot \nabla \phi \,\mathrm{d} x+ \int_\Omega v^p(\cdot, s_k + \bar{s})\,\phi \,\mathrm{d}x,
\end{split}
\end{equation*}
where $\bar{s} \in [-1,0]$ is given by the mean value theorem.
Letting $k \to +\infty$, we obtain
\begin{equation*}
0 = -\int_\Omega \nabla v_{-\infty} \cdot \nabla \phi \,\mathrm{d}x + \int_\Omega v_{-\infty}^p \phi \,\mathrm{d}x,
\end{equation*}
Thus, $v_{-\infty}$ is a positive solution to \eqref{eq:elliptic-equation}.
\end{proof}
Taking the limit in \eqref{eq:boundary-behavior-of-v} along the sequence from Lemma \ref{lm:sequence_to_-infty}, we find that the profile $S:=v_{-\infty}$ satisfies
\begin{equation}\label{eq:HI_elliptic}
C^{-1} \Phi_1(x) \leq S(x) \leq C \Phi_1(x),\quad \forall x \in \Omega.
\end{equation}
Here and below, the constant may depend on $n,p,\Omega$ and the corresponding a priori bound $C_\infty$.
Define $h(x,s):=v(x,s)-S(x)$, then $h$ satisfies
\begin{equation}\label{eq:equ_h}
\begin{cases}
a(x,s)S^{p-1}\partial_s h = \Delta h + c(x, s)S^{p-1}h &\text{~in~} \Omega \times \mathbb{R}, \\
h = 0 &\text{~on~} \partial \Omega \times \mathbb{R},
\end{cases}
\end{equation}
where 
$$a(x,s)= p\left(\frac{v(x,s)}{S(x)}\right)^{p-1}, \quad c(x,s) := \int_0^1 p\Big(\frac{v(x,s)}{S(x)}+\lambda \Big(1-\frac{v(x,s)}{S(x)}\Big)\Big)^{p-1} \,\mathrm{d}\lambda.$$
Using \eqref{eq:HI_elliptic} and Theorem \ref{thm:boundary_estimate_v}, there exists a constant $C=C(n,p,\Omega,C_\infty)\geq 1$ such that
\begin{equation}\label{eq:bounds_a_c}
C^{-1}  \leq |{a(x,s)}|,\,|{c(x,s)}| \leq C ,\quad \forall x \in \Omega,~\forall s \in (-\infty, 1].
\end{equation}
\begin{lem}\label{lm:L2_weighted_estimate}
Let $S$ be a bounded positive solution to \eqref{eq:elliptic-equation}, and let $h$ satisfy \eqref{eq:equ_h}.
Then for any $s \in \mathbb{R}_{-}$ and $s_0 \in [0,1]$,
\begin{equation*}
\int_{\Omega} h^2(\cdot, s + s_0) S^{p-1} \,\mathrm{d}x \leq Ce^{Cs_0} \int_{\Omega} h^2(\cdot, s) S^{p-1} \,\mathrm{d}x,
\end{equation*}
where $C$ is a positive constant depending only on $n$,
$p$, $\Omega$, and $C_\infty$.
\end{lem}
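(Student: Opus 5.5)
This is a weighted energy (Grönwall) estimate for the linear equation \eqref{eq:equ_h}. The plan is to test \eqref{eq:equ_h} against $h$ itself and show that the quantity
\[
E(s):=\int_\Omega a(x,s)S^{p-1}h^2\,\ud x
\]
satisfies $E'(s)\le C E(s)$. Since $a$ is comparable to $1$ by \eqref{eq:bounds_a_c}, $E(s)$ is equivalent to $\int_\Omega h^2S^{p-1}\,\ud x$. Integrating $E'\le CE$ over $[s,s+s_0]$ then gives $E(s+s_0)\le e^{Cs_0}E(s)$, and converting back to the weight $S^{p-1}$ costs a multiplicative constant $C$. All integrations by parts are justified by the $C^2(\overline\Omega)$ regularity of $v$ and $S$, the boundary condition $h=0$, and the bounds on $\partial_s v$ from Remark \ref{rmk:Higher_order_regullarity}.

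First, I compute the derivative:
\[
\frac{\ud}{\ud s}E=\int_\Omega \partial_s a\,S^{p-1}h^2\,\ud x+2\int_\Omega aS^{p-1}h\,\partial_s h\,\ud x .
\]
For the second term, I substitute the equation:
\[
2\int_\Omega h\bigl(\Delta h+cS^{p-1}h\bigr)\,\ud x=-2\int_\Omega|\nabla h|^2\,\ud x+2\int_\Omega cS^{p-1}h^2\,\ud x\le C\int_\Omega S^{p-1}h^2\,\ud x ,
\]
using $|c|\le C$ and simply discarding the gradient term.

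For the first term, I need $|\partial_s a|\le C$ on $\Omega\times(-\infty,1]$. Since $\partial_s a=p(p-1)(v/S)^{p-2}\,\partial_s v/S$, this follows from three facts: $v/S$ is bounded above and below by Theorem \ref{thm:boundary_estimate_v} and \eqref{eq:HI_elliptic}; $S\ge C^{-1}d(x)$ by \eqref{eq:HI_elliptic} and \eqref{eq:hopf-estimate}; and $\|d^{-1}\partial_s v\|_{L^\infty}\le C$ by Remark \ref{rmk:Higher_order_regullarity} with $\ell=1$.

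This bound on $\partial_s a$ is the main obstacle. The issue is that $a$ is a ratio of two quantities vanishing at $\partial\Omega$, so its time derivative is controlled only through the boundary-weighted estimate $d^{-1}\partial_s v\in L^\infty$, not through interior bounds alone. Combined with $a\ge C^{-1}$, it yields $E'\le CE$. If one prefers to avoid differentiating $a$, the fallback is to test with $h$ and write the time term as $\frac12\frac{\ud}{\ud s}\int aS^{p-1}h^2\,\ud x$ minus $\frac12\int\partial_s a\,S^{p-1}h^2\,\ud x$. This still requires exactly the same bound on $\partial_s a$, so it is unavoidable.

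The constants depend only on $n,p,\Omega$ and $C_\infty$. This follows because \eqref{eq:bounds_a_c} and the regularity bounds over the windows $[s,s+1]\subset(-\infty,1]$ depend only on those quantities, which finishes the plan.
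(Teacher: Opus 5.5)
Your proposal is correct and follows essentially the same route as the paper. Like the paper, you differentiate $\int_\Omega a\,h^2S^{p-1}\,\mathrm{d}x$, substitute \eqref{eq:equ_h}, drop $-2\int_\Omega|\nabla h|^2\,\mathrm{d}x$, control $c$ and $\partial_s a$ via \eqref{eq:bounds_a_c}, \eqref{eq:HI_elliptic} and Remark \ref{rmk:Higher_order_regullarity}, and conclude with Gr\"onwall on $[s,s+s_0]$ and the two-sided bound on $a$. Your explicit formula $\partial_s a=p(p-1)(v/S)^{p-2}\,\partial_s v/S$, combined with $\|d^{-1}\partial_s v\|_{L^\infty}\le C$, spells out the detail behind the paper's one-line appeal to those estimates.
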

\begin{proof}
Using \eqref{eq:equ_h} and integration by parts, for any $s \in \mathbb{R}_{-}$ and $s_0 \in [0,1]$, we have
\begin{equation*}
\begin{split}
\frac{\ud }{\ud s}\int_{\Omega} a(x, s)h^2 S^{p-1} \,\mathrm{d}x    
= & -\!2\int_{\Omega} |\nabla h|^2 \,\mathrm{d}x + \int_{\Omega} (2c(x, s) + \partial_s a(x, s)) h^2 S^{p-1} \,\mathrm{d}x\\
\leq & \,C \int_{\Omega} a(x,s)h^2 S^{p-1} \,\mathrm{d}x,
\end{split}
\end{equation*}
where $C$ depends on the constant in \eqref{eq:bounds_a_c} and $\|\partial_s a\|_{L^\infty(\Omega \times [s, s+s_0])}$.
It follows from \eqref{eq:HI_elliptic},
\eqref{eq:bounds_a_c}, and Remark
\ref{rmk:Higher_order_regullarity} that
$\|\partial_s a\|_{L^\infty(\Omega\times[s,s+s_0])}$ is bounded
uniformly for $s\in\mathbb R_-$ and $s_0\in[0,1]$ by a constant
depending only on $n$, $p$, $\Omega$, and $C_\infty$.
Integrating the above inequality from $s$ to $s+s_0$, we have
\begin{equation*}
\int_{\Omega} a(x, s+s_0)h^2(\cdot, s+s_0) S^{p-1} \,\mathrm{d}x \leq e^{Cs_0} \int_{\Omega} a(x, s)h^2(\cdot, s) S^{p-1} \,\mathrm{d}x,
\end{equation*}
and hence the desired estimate follows from \eqref{eq:bounds_a_c}.
\end{proof}
By the preceding lemma and Theorem 2.19 in \cite{JX23}, we obtain the
following estimate.
\begin{lem}\label{lm:C3_estimate} 
Let $S$ be a bounded positive solution to \eqref{eq:elliptic-equation}, and let $h$ satisfy \eqref{eq:equ_h}.
Then for any $T \in \mathbb{R}_{-}$,
\begin{equation*}
\sup_{s\in[T+\frac12,\,T+1]}\| \partial_s h(\cdot, s)\|_{C^3(\overline{\Omega})}
+ \sup_{s\in[T+\frac12,\,T+1]}\| h(\cdot, s)\|_{C^3(\overline{\Omega})}
\leq C \|h(\cdot, T)\|_{L^2(\Omega;\,\Phi_1^{p-1})}.
\end{equation*}
{Here, $C>0$ is a constant depending only on $n$, $p$,
$\Omega$, and $C_\infty$.}
\end{lem}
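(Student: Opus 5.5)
The plan is to treat \eqref{eq:equ_h} as a linear, uniformly parabolic equation with a boundary-degenerate weight $S^{p-1}\asymp\Phi_1^{p-1}$, and to combine Lemma \ref{lm:L2_weighted_estimate} with the weighted regularity theory of \cite{JX23} (Theorem 2.19). Three things need checking, in this order: the hypotheses on the coefficients, a weighted $L^2$ bound on the whole window, and the application of the regularity theorem.

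\emph{Coefficients.} Divide \eqref{eq:equ_h} by $a$; equivalently, keep the form $a S^{p-1}\partial_s h=\Delta h+cS^{p-1}h$. By \eqref{eq:HI_elliptic}, $S^{p-1}=\Phi_1^{p-1}\cdot (S/\Phi_1)^{p-1}$, and $S/\Phi_1$ is bounded above and below. It is also smooth up to the boundary, since $S\in C^{2+\alpha}(\overline\Omega)$ and Hopf's lemma applies. So the weight has exactly the structure $d^{p-1}\times$(positive smooth function) that is treated in \cite{JX23}. The coefficients $a$ and $c$ are functions of $v/S$. By Remark \ref{rmk:Higher_order_regullarity}, \eqref{eq:unif-window-nonint}, and the two-sided bounds of Theorem \ref{thm:boundary_estimate_v}, the quotient $v/S$ and all its $\partial_s$ derivatives are bounded in H\"older norms (in the intrinsic metric adapted to the degeneracy) on every window $\Omega\times[T,T+1]$. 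These bounds are uniform in $T\in\mathbb R_-$, and \eqref{eq:bounds_a_c} gives ellipticity. Hence the coefficient norms required by \cite[Theorem 2.19]{JX23} are controlled by $n,p,\Omega,C_\infty$ only.

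\emph{$L^2$ on the window.} Lemma \ref{lm:L2_weighted_estimate}, applied with base time $T$ and $s_0\in[0,1]$, gives
\[
\sup_{s\in[T,T+1]}\int_\Omega h^2(\cdot,s)S^{p-1}\,\ud x\le C\int_\Omega h^2(\cdot,T)S^{p-1}\,\ud x .
\]
Since $S^{p-1}\asymp\Phi_1^{p-1}$, the right-hand side is comparable to $\|h(\cdot,T)\|^2_{L^2(\Omega;\Phi_1^{p-1})}$. Consequently the space-time weighted norm $\|h\|_{L^2(\Omega\times[T,T+1];\Phi_1^{p-1})}$ is bounded by $C\|h(\cdot,T)\|_{L^2(\Omega;\Phi_1^{p-1})}$.

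\emph{Regularity.} Apply the local-in-time estimate of \cite[Theorem 2.19]{JX23} on the interior subwindow $[T+\tfrac12,T+1]\subset(T,T+1]$. This linear estimate bounds $\|h\|_{C^3}$ and $\|\partial_s h\|_{C^3}$ on the subwindow by the weighted $L^2$ norm on the full window. (If needed, iterate the estimate through a Moser/energy step first, then Schauder.) Since the equation is linear in $h$, there is no smallness requirement. Combining this with the previous step gives the claim with $C$ depending only on $n,p,\Omega,C_\infty$.

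The main obstacle is the first step: verifying that the coefficient regularity required in \cite{JX23} is really uniform in $T$. In particular, $\partial_s a$ and the H\"older seminorms of $a$ and $c$ near $\partial\Omega$ involve $v/S$, and controlling these requires the $d^{-1}\partial_s^\ell v$ bounds of Remark \ref{rmk:Higher_order_regullarity}.
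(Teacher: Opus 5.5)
Your proposal is correct and takes the same route as the paper, which proves this lemma in one line: it combines the weighted $L^2$ propagation of Lemma \ref{lm:L2_weighted_estimate} on the window $[T,T+1]$ with the linear degenerate-parabolic regularity estimate of Theorem 2.19 in \cite{JX23}. Your explicit check of the coefficient hypotheses (via \eqref{eq:HI_elliptic}, \eqref{eq:bounds_a_c} and Remark \ref{rmk:Higher_order_regullarity}, uniformly in $T$) adds detail the paper leaves implicit but does not change the argument.
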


Set
\[
\mathscr{D}
:=
\left\{
w\in C^2(\overline{\Omega}) :
0<\inf_{\Omega}\frac{w}{\Phi_1}
\leq
\sup_{\Omega}\frac{w}{\Phi_1}
<\infty
\right\}.
\]
{By \eqref{eq:HI_elliptic}, we have \(S\in\mathscr{D}\).
Proposition 6.1 in Feireisl--Simondon \cite{FS00}, which is a
positive-solution version of the {\L}ojasiewicz--Simon inequality
of Simon \cite{S83}, applies at \(S\) for every \(p>1\), including noninteger
values of \(p\). The energy
functional used there is \(\mathcal E=\frac12F\).

Consequently, there exist $\theta\in(0,1/2]$, $ \delta_0>0$, $ C>0$ such that, for every \(w\in\mathscr{D}\) satisfying $\|w-S\|_{C^2(\overline{\Omega})}<\delta_0$, one has
\begin{equation}\label{eq:LS}
\begin{split}
\bigl|F[w]-F[S]\bigr|^{1-\theta}
&\leq
C\|-\Delta w-w^p\|_{H^{-1}(\Omega)}
\\
&\leq
C\|-\Delta w-w^p\|_{L^2(\Omega)}.
\end{split}
\end{equation}
 We have stated the
local neighborhood in the stronger \(C^2(\overline{\Omega})\)
topology, which is sufficient for the application below.

The following backward-time version of the finite-length argument in
the proof of Theorem 3.1 in Feireisl--Simondon \cite{FS00} upgrades the subsequential
convergence in Lemma \ref{lm:sequence_to_-infty} to full convergence.}

\begin{lem}\label{lm:convergence_to_-infty}
Let $v_{-\infty}$ be the positive solution to \eqref{eq:elliptic-equation} given in Lemma \ref{lm:sequence_to_-infty}.
Then we have
\begin{equation*}
v(\cdot, s) \rightarrow v_{-\infty} ~~ \text{in } C^{2}(\overline{\Omega}) \quad \text{as } s \to -\infty.
\end{equation*}
\end{lem}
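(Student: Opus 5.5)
The argument is a backward-time version of the Łojasiewicz--Simon finite-length argument. Set $S:=v_{-\infty}$ and let $\{s_k\}$ be the sequence from Lemma \ref{lm:sequence_to_-infty}. By Lemma \ref{lem:energy-monotonicity-bounds}, $F[v(\cdot,s)]$ is nonincreasing in $s$ and converges to $F_{-\infty}$ as $s\to-\infty$. Since $v(\cdot,s_k)\to S$ in $C^2$, we get $F_{-\infty}=F[S]$ and $F[v(\cdot,s)]\le F[S]$ for all $s$. If $F[v(\cdot,s^\ast)]=F[S]$ for some $s^\ast$, then \eqref{eq:energy-derivative-identity} gives $\partial_s v\equiv0$ for $s\le s^\ast$, so $v\equiv S$ there and we are done. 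Otherwise define
\[
H(s):=\bigl(F[S]-F[v(\cdot,s)]\bigr)^{\theta}>0,
\]
which is nondecreasing in $s$ and tends to $0$ as $s\to-\infty$.

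Suppose $\|v(\cdot,\sigma)-S\|_{C^2}<\delta_0$ on an interval $\sigma\in(s',s'')$. Note that $v(\cdot,\sigma)\in\mathscr D$ by Theorem \ref{thm:boundary_estimate_v}. Differentiating and using \eqref{eq:energy-derivative-identity}, the equation $-\Delta v-v^p=-pv^{p-1}\partial_s v$, and \eqref{eq:LS}:
\[
\frac{\ud}{\ud s}H
=2p\theta\,(F[S]-F[v])^{\theta-1}\!\int_\Omega v^{p-1}|\partial_s v|^2
\ge c\,\frac{\int_\Omega v^{p-1}|\partial_s v|^2}{\|v^{p-1}\partial_s v\|_{L^2}}.
\]
The weights can be handled through $v\le C$, which comes from Lemma \ref{lm:norm_infty_upper_bound}/\ref{lem:Lpplus1-growth}. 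This gives
\[
\|v^{p-1}\partial_s v\|_{L^2}\le C\Bigl(\int v^{p-1}|\partial_sv|^2\Bigr)^{1/2},
\]
and hence
\[
\frac{\ud}{\ud s}H\ge c\Bigl(\int_\Omega v^{p-1}|\partial_s v|^2\Bigr)^{1/2}\ge c\,\|v^{\frac{p+1}{2}}_s\|_{L^2}.
\]
Consequently the $L^2$-length of the curve $\sigma\mapsto v^{(p+1)/2}(\cdot,\sigma)$ over $(s',s'')$ is at most $C\,(H(s'')-H(s'))\le C H(s'')$. This is the key finite-length bound.

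We use it in a trapping argument. Fix a small $\varepsilon<\delta_0$ and choose $k$ so large that $\|v(\cdot,s_k)-S\|_{C^2}<\varepsilon$ and $H(s_k)<\varepsilon'$ (here $\varepsilon'$ is a tiny parameter). We follow the trajectory backward from $s_k$ and let $s'\le s_k$ be the first time going backward at which $\|v(\cdot,s')-S\|_{C^2}=\delta_0$; suppose such an $s'$ exists. Because we go backward, $H(s_k)$ bounds $H$ on $(s',s_k)$, so the finite-length bound gives
\[
\|v^{\frac{p+1}2}(\cdot,s')-S^{\frac{p+1}2}\|_{L^2}\le C\varepsilon+C\varepsilon'.
\]
The inequality $|a-b|^{(p+1)/2}\le|a^{(p+1)/2}-b^{(p+1)/2}|$ turns this into $L^{p+1}$ smallness of $v(\cdot,s')-S$. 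We then upgrade to $C^2$ smallness by interpolating with the uniform $C^3$ bounds of Remark \ref{rmk:Higher_order_regullarity}, as in the proof of Lemma \ref{lm:sequence_to_-infty}. For $\varepsilon,\varepsilon'$ small this contradicts $\|v(\cdot,s')-S\|_{C^2}=\delta_0$. Hence $v(\cdot,s)$ stays in the $\delta_0$-neighborhood for all $s\le s_k$. The total length over $(-\infty,s_k]$ is then finite, so $v^{(p+1)/2}(\cdot,s)$ is Cauchy in $L^2$ as $s\to-\infty$, and its limit must be $S^{(p+1)/2}$ by the subsequence. The same interpolation gives $v(\cdot,s)\to S$ in $C^2(\overline\Omega)$.

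The main obstacle is the mismatch between the norms. Estimate \eqref{eq:LS} holds in $C^2$-neighborhoods, while the length is controlled only in a weaker weighted $L^2$ sense, and the gradient term carries the degenerate weight $v^{p-1}$ near $\partial\Omega$. I would handle this by working with $v^{(p+1)/2}$, so that $\partial_s$ matches the dissipation exactly, and by relying on the uniform higher-order bounds to interpolate back to $C^2$. The reason the backward direction works is that $H$ is largest at the starting time $s_k$ and decreases toward $-\infty$, which mirrors the forward argument of Feireisl--Simondon.
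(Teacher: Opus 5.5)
Your proposal is correct and follows the approach the paper indicates. The paper gives no separate proof and simply invokes a backward-time version of the Feireisl--Simondon finite-length {\L}ojasiewicz--Simon argument. That is what you carry out: the monotone quantity $H=(F[S]-F[v])^\theta\to0$ as $s\to-\infty$, the $L^2$-length bound for $v^{(p+1)/2}$, trapping in the $\delta_0$-neighbourhood, and interpolation against the uniform $C^3$ bounds of Remark~\ref{rmk:Higher_order_regullarity} to recover $C^2$ convergence. One small point to state explicitly is that the first-exit time needs $s\mapsto v(\cdot,s)$ to be continuous in $C^2(\overline{\Omega})$; this follows from the uniform $C^3$ bound on $\partial_s v$ in the same remark.
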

\begin{lem}\label{lm:relative_error_decay}
Let $v_{-\infty}$ be the positive solution to \eqref{eq:elliptic-equation} given in Lemma \ref{lm:sequence_to_-infty}.
Then there exists $s_0 \in (-\infty, 0)$ such that, for every
$s \in (-\infty, s_0]$,
\begin{equation*}
\Big\| \frac{v(\cdot,s)}{v_{-\infty}} - 1\Big\|_{C^2(\overline{\Omega})} \leq \begin{cases}
C(-s)^{-\gamma}, &\text{if}~\theta \in (0, {1}/{2}), \\
Ce^{\gamma s}, &\text{if}~\theta = {1}/{2},
\end{cases}
\end{equation*}
where $\theta \in (0, {1}/{2}]$ is given in the {\L}ojasiewicz-Simon type inequality \eqref{eq:LS} and $\gamma > 0$ is a constant depending only on $n$, $p$, $\Omega$, and $v_{-\infty}$.
\end{lem}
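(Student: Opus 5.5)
The plan is to follow the standard Łojasiewicz--Simon rate argument backward in time, using the full convergence from Lemma \ref{lm:convergence_to_-infty} to stay inside the neighborhood where \eqref{eq:LS} holds. Set $S:=v_{-\infty}$. Choose $s_0$ so that $\|v(\cdot,s)-S\|_{C^2(\overline\Omega)}<\delta_0$ for $s\le s_0$; then $v(\cdot,s)\in\mathscr D$ by Theorem \ref{thm:boundary_estimate_v}.

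\textbf{Step 1: an ODE inequality for the energy gap.} Define
\[
H(s):=F[v(\cdot,s)]-F[S]\ge 0,
\]
which is nonnegative by the monotonicity in Lemma \ref{lem:energy-monotonicity-bounds}, since $F[v(\cdot,s)]\downarrow$ in $s$ and $F[v(\cdot,s)]\to F[S]$ as $s\to-\infty$. From \eqref{eq:energy-derivative-identity},
\[
-H'(s)=2p\int_\Omega v^{p-1}|\partial_s v|^2\,\ud x .
\]
The equation gives $-\Delta v-v^p=-\partial_s v^p=-p v^{p-1}\partial_s v$. Since $v\le C\Phi_1$ is bounded, we get
\[
\|-\Delta v-v^p\|_{L^2}^2\le C\int_\Omega v^{p-1}|\partial_s v|^2\,\ud x \le -C H'(s).
\]
Combining this with \eqref{eq:LS} yields $H^{2(1-\theta)}\le -CH'$ for $s\le s_0$. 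Integrating backward from $s_0$ to $s$ (note $H$ increases as $s$ decreases but tends to $0$ at $-\infty$) gives:
\begin{itemize}
\item if $\theta=1/2$: $H(s)\le Ce^{2\gamma s}$;
\item if $\theta<1/2$: $H(s)\le C(-s)^{-1/(1-2\theta)}$.
\end{itemize}

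\textbf{Step 2: length estimate.} Next I would bound the $L^2$-length of the trajectory on $(-\infty,s]$. Since $v^{p-1}\ge c\,\Phi_1^{p-1}$, the quantity $\int_{-\infty}^s\|\partial_\tau v\|_{L^2(\Phi_1^{p-1})}\,\ud\tau$ is what needs control. The standard trick is
\[
-\frac{\ud}{\ud s}H^\theta=\theta H^{\theta-1}(-H')\ge c\,\|\partial_s v\|_{L^2(\Phi_1^{p-1})},
\]
which uses $-H'\ge c\|\partial_s v\|^2_{L^2(v^{p-1})}$ together with $H^{1-\theta}\le C\|\cdot\|_{L^2}\le C\|\partial_s v\|_{L^2(v^{p-1})}$. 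Hence
\[
\|h(\cdot,s)\|_{L^2(\Phi_1^{p-1})}\le\int_{-\infty}^s\|\partial_\tau v\|\,\ud\tau\le C H(s)^\theta,
\]
where $h=v-S$ and we use $h(\cdot,s)\to0$ as $s\to-\infty$. This gives polynomial decay with exponent $\gamma=\theta/(1-2\theta)$ when $\theta<1/2$, and exponential decay when $\theta=1/2$.

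\textbf{Step 3: upgrade to the relative $C^2$ norm.} The main obstacle is passing from the weighted $L^2$ bound to $\|v/S-1\|_{C^2}$, because dividing by $S\asymp d$ loses a derivative near $\partial\Omega$. I would first apply Lemma \ref{lm:C3_estimate} with $T=s-1$; the rate in $s-1$ is comparable to the rate in $s$. This bounds $\|h(\cdot,s)\|_{C^3(\overline\Omega)}$ by $C\|h(\cdot,s-1)\|_{L^2(\Phi_1^{p-1})}$. Then, since $h=0$ on $\partial\Omega$ and $S$ is smooth with $\partial_\nu S<0$ by Hopf and \eqref{eq:HI_elliptic}, the quotient $h/S$ is controlled in $C^2(\overline\Omega)$ by $C\|h\|_{C^3}$. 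To see this, write
\[
h(x)=d(x)\int_0^1\partial_\nu h\,\cdots\quad\text{and}\quad S=d\cdot g \text{ with } g\ge c>0 \text{ in } C^2,
\]
so that the quotient of two $C^3$ functions vanishing on the boundary, with nondegenerate denominator, lies in $C^2$. This yields the stated estimate with $\gamma$ depending only on $\theta$ and hence on $n,p,\Omega,v_{-\infty}$. The constant $C$ may depend on $u$ through $s_0$.
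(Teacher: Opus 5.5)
\textbf{A sign error to fix.} Your $H(s)=F[v(\cdot,s)]-F[S]$ is not $\ge 0$. Since $F[v(\cdot,s)]$ is nonincreasing in $s$ and tends to $F[S]$ as $s\to-\infty$, one has $F[v(\cdot,s)]\le F[S]$, i.e.\ $H\le 0$; your own parenthetical remark says exactly this. As written, $H^{2(1-\theta)}$ and $H^\theta$ are not meaningful. Worse, integrating your Step 2 inequality $-\frac{\mathrm{d}}{\mathrm{d}s}H^\theta\ge c\|\partial_s v\|$ over $(-\infty,s]$ gives $-H(s)^\theta\ge c\int_{-\infty}^s\|\partial_\tau v\|\,\mathrm{d}\tau$, which contradicts your claim $H\ge0$ unless $v$ is stationary.

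You have carried over the forward-time convention. Backward in time the correct quantity is the paper's $G(s):=F[v_{-\infty}]-F[v(\cdot,s)]\ge 0$. It is nondecreasing, satisfies $G(-\infty)=0$, and $G'=2p\int_\Omega v^{p-1}|\partial_s v|^2\,\mathrm{d}x$. Then \eqref{eq:LS} gives $G'\ge cG^{2(1-\theta)}$ and $\frac{\mathrm{d}}{\mathrm{d}s}G^\theta\ge c\|\partial_s v\|_{L^2(\Phi_1^{p-1})}$. Integrating over $(-\infty,s]$ yields exactly the bounds you state. With this repair your argument is correct.

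\textbf{Comparison with the paper.} Your Step 1 coincides with the paper's; the routes differ afterwards.

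The paper obtains an $L^{p+1}$ rate for $v-v_{-\infty}$ by appealing to the argument of \cite{JXY24}. It then interpolates against the uniform bounds of Remark \ref{rmk:Higher_order_regullarity} to get a $C^1$ rate and divides by $v_{-\infty}\asymp d$ using the boundary condition. The relative $C^2$ bound is obtained ``similarly,'' possibly with a smaller $\gamma$.

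You instead run the finite-length argument directly in $L^2(\Phi_1^{p-1})$. This norm is comparable to the dissipation metric $L^2(v^{p-1})$ by Theorem \ref{thm:boundary_estimate_v}, and it is precisely the norm on the right-hand side of Lemma \ref{lm:C3_estimate}. That linear smoothing estimate then gives $\|h(\cdot,s)\|_{C^3}\le C\|h(\cdot,s-1)\|_{L^2(\Phi_1^{p-1})}$ with no loss of rate. The quotient bound $\|h/S\|_{C^2}\le C\|h\|_{C^3}$ finishes; the paper itself uses this bound later, in the proof of Theorem \ref{thm:asymptotic_behaviour}.

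Your version has three advantages. It is self-contained, with no appeal to \cite{JXY24}. It keeps the exponent $\gamma=\theta/(1-2\theta)$ in the polynomial case. It also handles directly the fact that the relative $C^2$ norm requires smallness of $h$ in $C^3$, which interpolation against uniform bounds delivers only with regularity beyond $C^3$ and some loss in the exponent. The paper's route, for its part, uses only the energy structure and the a priori bounds, not the linearized equation for $h$.
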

\begin{proof}
Using \eqref{eq:energy-derivative-identity} and Remark \ref{rmk:Higher_order_regullarity}, we have that for any $s \in \mathbb{R}_{-}$,
\begin{align*}
\frac{\ud}{\ud s} \big(F[v_{-\infty}]-F[v(\cdot, s)]\big) 
= & \,\frac{2}{p}\int_\Omega v^{1-p}|\partial_s v^{p}|^2 \,\mathrm{d}x \\
\geq & \,C\int_\Omega |\partial_s v^{p}|^2 \,\mathrm{d}x = C\|-\!\Delta v(\cdot, s) - v^p(\cdot, s)\|_{L^2(\Omega)}^2.
\end{align*}
Combining the above inequality with the {\L}ojasiewicz-Simon type inequality \eqref{eq:LS}, there exists $s_0 \in \mathbb{R}_{-}$ such that for any $s \leq s_0$,
\begin{equation*}
\frac{\ud}{\ud s} \big(F[v_{-\infty}]-F[v(\cdot, s)]\big) \geq C \big(F[v_{-\infty}]-F[v(\cdot, s)]\big)^{2(1-\theta)}.
\end{equation*}
Thus for all $s \leq s_0$,
\begin{equation*}
F[v_{-\infty}]-F[v(\cdot, s)] \leq 
\begin{cases}
C(-s)^{-\frac{1}{1-2\theta}}, &\text{if}~\theta \in (0, {1}/{2}), \\
e^{Cs}, &\text{if}~\theta = {1}/{2}.
\end{cases}
\end{equation*}
Arguing as in the proof of Theorem 1.2 in \cite{JXY24}, we obtain, for
all $s \leq s_0$,
\begin{equation*}
\|v(\cdot, s)-v_{-\infty}\|_{L^{p+1}(\Omega)} \leq 
\begin{cases}
C(-s)^{-\gamma}, &\text{if}~\theta \in (0, {1}/{2}), \\
Ce^{\gamma s}, &\text{if}~\theta = {1}/{2}.
\end{cases}
\end{equation*}
By Remark \ref{rmk:Higher_order_regullarity} and the interpolation
inequality on page 126 of Nirenberg \cite{N59}, we obtain
\begin{equation*}
\|v(\cdot, s)-v_{-\infty}\|_{C^1(\overline{\Omega})} \leq \begin{cases}
C(-s)^{-\gamma}, &\text{if}~\theta \in (0, {1}/{2}), \\
Ce^{\gamma s}, &\text{if}~\theta = {1}/{2},
\end{cases}\quad \forall s \leq s_0.
\end{equation*}
Since $v(\cdot, s) \equiv v_{-\infty}\equiv 0$ on $\partial \Omega$, we have for all $x \in \Omega$ and all $s \leq s_0$ that
\begin{equation*}
\begin{split}
\Big| \frac{v(x,s)-v_{-\infty}(x)}{v_{-\infty}(x)}\Big| 
\leq &\,\frac{\|v(\cdot, s)-v_{-\infty}\|_{C^1(\overline{\Omega})}d(x)}{d(x)/C}\\
\leq &
\begin{cases}
C(-s)^{-\gamma}, &\text{if}~\theta \in (0, {1}/{2}), \\
Ce^{\gamma s}, &\text{if}~\theta = {1}/{2}.
\end{cases}
\end{split}
\end{equation*}
It follows that
\begin{equation*}
\Big\| \frac{v(\cdot,s)}{v_{-\infty}} - 1\Big\|_{L^\infty(\Omega)} 
\leq \begin{cases}
C(-s)^{-\gamma}, &\text{if}~\theta \in (0, {1}/{2}), \\
Ce^{\gamma s}, &\text{if}~\theta = {1}/{2},
\end{cases}
\quad \forall s \leq s_0.
\end{equation*}
Similarly, we obtain
\begin{equation*}
\Big\| \frac{v(\cdot,s)}{v_{-\infty}} - 1\Big\|_{C^2(\overline{\Omega})} \leq \begin{cases}
C(-s)^{-\gamma}, &\text{if}~\theta \in (0, {1}/{2}), \\
Ce^{\gamma s}, &\text{if}~\theta = {1}/{2},
\end{cases}
\quad \forall s \leq s_0,
\end{equation*}
with a possibly different constant $\gamma>0$.
\end{proof}

\subsection{Sharp Rates}
We shall follow the approach in \cite{CMS23} to establish the sharp convergence rates of the relative error term $g$, which is defined by
$$g(x,s):=\frac{v(x,s)-v_{-\infty}(x)}{v_{-\infty}(x)},$$
which satisfies
\begin{equation} 
p\partial_s g + \mathcal{A} g = \mathcal{N}(g) \quad\text{in }\Omega\times\mathbb{R},
\end{equation}
where the linear operator $\mathcal{A}$ admits the equivalent expressions
\begin{align*}
\mathcal{A}g
&= -v_{-\infty}^{-p}\Delta(v_{-\infty}g) - pg,
\end{align*}
and the nonlinear term is given by
$$
\mathcal{N}(g) = (1+g)^p - pg - 1 + p\bigl(1-(1+g)^{p-1}\bigr)\partial_s g.
$$

For $q\in[1,\infty)$ and $\sigma\in\mathbb{R}_+$, define the weighted Lebesgue space
$$
L^q_{\sigma}(\Omega)
= \Bigl\{ g\in L^1_{\mathrm{loc}}(\Omega) :
\|g\|_{L^q_{\sigma}} = \Bigl(\int_{\Omega}|g|^q v_{-\infty}^{\sigma}\,dx\Bigr)^{1/q}<\infty \Bigr\}.
$$
Then $L^2_{\sigma}(\Omega)$ is a Hilbert space with inner product
$$
\langle g_1,g_2\rangle_{L^2_{\sigma}(\Omega)}
= \int_{\Omega}g_1g_2 v_{-\infty}^{\sigma}\,dx,\quad \forall g_1,g_2\in L^2_{\sigma}(\Omega).
$$
By Lemma 2.1 in Bonforte--Figalli \cite{BF21}, the eigenvalue problem
$$
\begin{cases}
(-\Delta - pv_{-\infty}^{p-1})\tilde{e} = \lambda v_{-\infty}^{p-1}\tilde{e} & \text{in }\Omega,\\
\tilde{e}=0 & \text{on }\partial\Omega
\end{cases}
$$
admits eigenpairs $\{(\lambda_i,\tilde{e}_i)\}_{i=-I}^\infty$ such that
\begin{itemize}
\item the negative eigenvalues satisfy
$$
1-p=\lambda_{-I}<\lambda_{-I+1}\le\cdots\le\lambda_{-1}<0.
$$
Writing $K:=\dim\ker\mathcal L_{v_{-\infty}}$, if $K\geq1$, then
$$
\lambda_0=\cdots=\lambda_{K-1}=0<\lambda_K\le\lambda_{K+1}\le\cdots\to+\infty;
$$
if $K=0$, the zero block is absent and
$$
0<\lambda_0\le\lambda_1\le\cdots\to+\infty;
$$
\item $\{\tilde{e}_i\}_{i=-I}^\infty$ is an orthonormal basis of $L^2_{p-1}(\Omega)$.
\end{itemize}
Since $g\mapsto gv_{-\infty}^{-1}$ is an isomorphism from $L^2_{p-1}(\Omega)$ to $L^2_{p+1}(\Omega)$,
the family $\{(\lambda_i,e_i=\tilde{e}_i v_{-\infty}^{-1})\}_{i=-I}^\infty$ forms an orthonormal basis of $L^2_{p+1}(\Omega)$.
We have the direct decomposition
$$
L^2_{p+1}(\Omega) = \mathscr{X}_- \oplus \mathscr{X}_0 \oplus \mathscr{X}_+,
$$
where
$$
\mathscr{X}_-=\mathrm{span}\{e_i:i<0\},\quad
\mathscr{X}_0=\mathrm{span}\{e_i:0\le i<K\},\quad
\mathscr{X}_+=\overline{\mathrm{span}\{e_i:i\ge K\}}.
$$
Let $\mathcal{P}_-,\mathcal{P}_0,\mathcal{P}_+$ be the corresponding orthogonal projections. Then
$$
g = \mathcal{P}_-g + \mathcal{P}_0g + \mathcal{P}_+g =: g_-+g_0+g_+,\quad \forall g\in L^2_{p+1}(\Omega).
$$
For brevity, in the rest of this subsection we write
$\|\cdot\|=\|\cdot\|_{L^2_{p+1}(\Omega)}$ and
$\langle\cdot,\cdot\rangle=\langle\cdot,\cdot\rangle_{L^2_{p+1}(\Omega)}$.
We now prove Theorem \ref{thm:asymptotic_behaviour}.
\begin{proof}[Proof of Theorem \ref{thm:asymptotic_behaviour}]
It follows from Lemma \ref{lm:relative_error_decay} that the upper bound in \eqref{eq:poly_decay} holds.
If $v_{-\infty}$ is integrable, then by Lemma 1 in Adams--Simon
\cite{AS88}, the
{\L}ojasiewicz-Simon type inequality \eqref{eq:LS} holds with
$\theta = 1/2$.
It follows from Lemma \ref{lm:relative_error_decay} that \eqref{eq:exp_decay} holds.
Next we shall prove the lower bound in \eqref{eq:poly_decay} and a sharp version of \eqref{eq:exp_decay}.

For $a = -$, $0$ or $+$, we have
\begin{align*}
2\|g_a\|\frac{\ud }{\ud s}\|g_a\|= \frac{\ud }{\ud s}\|g_a\|^2
=  2\langle g_a,\partial_s g_a \rangle 
=  -\frac{2}{p}\big\langle g_a,\mathcal{A} g_a \big\rangle + \frac{2}{p}\big\langle g_a, \mathcal{P}_a\mathcal{N}(g) \big\rangle.
\end{align*}
By the Cauchy--Schwarz inequality, we have
\begin{equation*}
\big \vert \big\langle g_a,\mathcal{A} g_a \big\rangle \big\vert 
\leq \Vert g_a\Vert  \Vert \mathcal{A} g_a \Vert,\quad 
\big \vert \big\langle g_a, \mathcal{P}_a\mathcal{N}(g) \big\rangle \big\vert \leq \Vert g_a\Vert  \Vert \mathcal{N}(g) \Vert.
\end{equation*}
Consequently, we have 
\begin{equation}\label{eq:projections_ODE_I}
\begin{split}
\frac{\ud }{\ud s}\|g_-\| \geq & -\!\frac{\lambda_{-1}}{p}\|g_-\| - \frac{1}{p}\big\Vert \mathcal{N}(g) \big\Vert, \\
\Big|\frac{\ud }{\ud s}\|g_0\|\Big| \leq & \,\frac{1}{p}\big\Vert \mathcal{N}(g) \big\Vert, \\
\frac{\ud }{\ud s}\|g_+\| \leq & -\!\frac{\lambda_K }{p}\|g_+\| + \frac{1}{p}\big\Vert \mathcal{N}(g) \big\Vert.
\end{split}
\end{equation}
It follows from Lemma \ref{lm:relative_error_decay} that $\|g(\cdot, s)\|_{C^2(\overline{\Omega})} \to 0$ as $s \to -\infty$.
{
Set $h=v-v_{-\infty}=v_{-\infty}g$.  Both $h$ and $\partial_sh$
vanish on $\partial\Omega$.  Taking $T=s-1$ in Lemma
\ref{lm:C3_estimate}, and using
$v_{-\infty}\asymp\Phi_1\asymp d(\cdot,\partial\Omega)$, we obtain
\begin{equation}\label{eq:relative-Linfty-smoothing}
\begin{split}
\|g(\cdot,s)\|_{L^\infty(\Omega)}
&+\|\partial_sg(\cdot,s)\|_{L^\infty(\Omega)}\\
&\leq C\bigl(
\|h(\cdot,s)\|_{C^1(\overline\Omega)}
+\|\partial_sh(\cdot,s)\|_{C^1(\overline\Omega)}
\bigr)\\
&\leq C\|h(\cdot,s-1)\|_{L^2(\Omega;\,\Phi_1^{p-1})}
\leq C\|g(\cdot,s-1)\|,
\qquad s<0.
\end{split}
\end{equation}
Here the first inequality follows from the mean value theorem and the
boundary conditions, while the last one follows from
$h=v_{-\infty}g$ and $v_{-\infty}\asymp\Phi_1$.

For all sufficiently negative $s$, Taylor's formula applied to the
definition of $\mathcal N$ gives
\begin{equation}\label{eq:nonlinear_term_estimate}
|\mathcal N(g(x,s))|
\leq C|g(x,s)|\bigl(|g(x,s)|+|\partial_sg(x,s)|\bigr).
\end{equation}
Taking the $L^2_{p+1}(\Omega)$ norm in
\eqref{eq:nonlinear_term_estimate}, we obtain
\[
\|\mathcal N(g(\cdot,s))\|
\leq C\bigl(
\|g(\cdot,s)\|_{L^\infty(\Omega)}
+\|\partial_sg(\cdot,s)\|_{L^\infty(\Omega)}
\bigr)\|g(\cdot,s)\|.
\]
Since $\|g(\cdot,s-1)\|\to0$ as $s\to-\infty$,
\eqref{eq:relative-Linfty-smoothing} shows that, for every
$\varepsilon>0$, there exists $s_0=s_0(\varepsilon)<0$ such that
\begin{equation}\label{eq:nonlinear_term_estimate_norm}
\|\mathcal N(g(\cdot,s))\|\leq\varepsilon\|g(\cdot,s)\|,
\qquad s\leq s_0.
\end{equation}
}
Set $\lambda:= 2p^{-1}\min\{\lambda_K, -\lambda_{-1}\}$, and $\tau = \lambda s$.
Then there exists $\tau_0=\tau_0(\varepsilon) < 0$ such that for all $\tau \leq \tau_0$, $\|g(\cdot,\tau)\|\rightarrow 0$ as $\tau \to -\infty$ and
\begin{equation}\label{eq:projections_ODE_II}
\begin{split}
\frac{\ud }{\ud \tau}\|g_-\| \geq & -\!\frac{\lambda_{-1}}{p\lambda}\|g_-\| - \frac{\varepsilon}{p\lambda}\big\Vert g \big\Vert, \\
\Big|\frac{\ud }{\ud \tau}\|g_0\|\Big| \leq & \,\frac{\varepsilon}{p\lambda}\big\Vert g \big\Vert, \\
\frac{\ud }{\ud \tau}\|g_+\| \leq & -\!\frac{\lambda_K }{p\lambda}\|g_+\| + \frac{\varepsilon}{p\lambda}\big\Vert g \big\Vert.
\end{split}
\end{equation}
From now on, $s_0<0$ is a constant that can be fixed to different values as needed.
By Lemma 4.6 in \cite{CHH22}, which refines Lemma A.1 in
Merle--Zaag \cite{MZ98}, either $g\equiv0$, or one of the following two
alternatives holds:
\begin{equation}\label{eq:caseI}
\|g_-\| + \|g_+\| = o(\|g_0\|)~~\text{as}~s \to -\infty
\end{equation}
or
\begin{equation}\label{eq:caseII}
\|g_0\| + \|g_+\| \leq \frac{100\varepsilon}{p\lambda}\|g_-\|~~\text{for}~s \leq s_0
\end{equation}
The case $g\equiv0$ is immediate.  We therefore investigate the two
alternatives separately.

\textbf{Case I.} If \eqref{eq:caseI} holds, then we can suppose that
\begin{equation*}
\|g_-\| + \|g_+\| \leq \frac{1}{2}\|g_0\|~~ \text{for}~s \leq s_0.
\end{equation*}
Thus using \eqref{eq:projections_ODE_I}, we have
\begin{equation*}
\left|\frac{\ud }{\ud s}\|g_0\|\right| \leq  \frac{\varepsilon}{p}\big\Vert g \big\Vert \leq \frac{2\varepsilon}{p}\|g_0\|.
\end{equation*}
By the Gronwall inequality, for all $s \leq s_0$ we have
\begin{equation*}
\|g(\cdot, s-1)\|\leq 2\|g_0(\cdot, s-1)\| \leq C\|g_0(\cdot, s)\|\leq C\|g(\cdot, s)\|.
\end{equation*}
{Combining \eqref{eq:relative-Linfty-smoothing},
\eqref{eq:nonlinear_term_estimate}, and the preceding comparison of
$\|g(\cdot,s-1)\|$ with $\|g(\cdot,s)\|$, we obtain, for $s\leq s_0$,}
\begin{equation}\label{eq:estimate_nonlinear}
{
\begin{aligned}
\|\mathcal{N}(g(\cdot,s))\|
&\leq C\bigl(\|g(\cdot,s)\|_{L^\infty}
+\|\partial_sg(\cdot,s)\|_{L^\infty}\bigr)\|g(\cdot,s)\|\\
&\leq C\|g(\cdot,s-1)\|\|g(\cdot,s)\|
\leq C\|g(\cdot,s)\|^2.
\end{aligned}
}
\end{equation}
By \eqref{eq:projections_ODE_I} again, we have
\begin{equation*}
\frac{\ud }{\ud s}\|g_0\| \leq  \frac{C}{p}\|g\|^2 \leq C\|g_0\|^2,\quad \forall s \leq s_0,
\end{equation*}
and so 
\begin{equation*}
\|g(\cdot, s)\|\geq \|g_0(\cdot, s)\| \geq C(-s)^{-1},\quad \forall s \leq s_0.
\end{equation*}
{This} proves the lower bound in \eqref{eq:poly_decay}.

\textbf{Case II.} If \eqref{eq:caseII} holds, then
\begin{equation*}
\|g_0\| + \|g_+\| \leq \frac{1}{2}\|g_-\|,\quad \forall s \leq s_0.
\end{equation*}
Combining the above inequality with \eqref{eq:nonlinear_term_estimate_norm}, we have
\begin{equation*}
\|\mathcal{N}(g)\| \leq 2\varepsilon \| g_{-}\|,\quad \forall s \leq s_0.
\end{equation*}
It follows from \eqref{eq:projections_ODE_I} that
\begin{equation*}
\frac{\ud }{\ud s}\|g_-\| \geq  \frac{-\lambda_{-1}-2\varepsilon}{p}\|g_-\|, \quad \forall s \leq s_0,
\end{equation*}
and by the Gronwall inequality, we have
\begin{equation*}
\|g(\cdot, s)\| \leq 2\|g_-(\cdot, s)\| \leq C e^{\frac{-\lambda_{-1}-2\varepsilon}{p}(s-s_0)}\|g_-(\cdot, s_0)\|,\quad \forall s \leq s_0.
\end{equation*}
After decreasing $\varepsilon$ if necessary, set $a:=(-\lambda_{-1}-2\varepsilon)/p>0$ and arrange in addition that $2a>-\lambda_{-1}/p$. Integrating the preceding differential inequality over $[s-1,s]$ and using the unstable-mode dominance in Case II, we obtain
\[
\|g(\cdot,s-1)\|\leq2\|g_-(\cdot,s-1)\|
\leq2e^{-a}\|g_-(\cdot,s)\|\leq C\|g(\cdot,s)\|.
\]
{It follows from \eqref{eq:relative-Linfty-smoothing} that
\[
\|g(\cdot,s)\|_{L^\infty}
+\|\partial_sg(\cdot,s)\|_{L^\infty}
\leq C\|g(\cdot,s-1)\|
\leq C\|g(\cdot,s)\|.
\]
Combining this with \eqref{eq:nonlinear_term_estimate}, for
$s\leq s_0$ we have}
{
\begin{equation*}
\begin{aligned}
\|\mathcal{N}(g(\cdot,s))\|
&\leq C\bigl(\|g(\cdot,s)\|_{L^\infty}
+\|\partial_sg(\cdot,s)\|_{L^\infty}\bigr)\|g(\cdot,s)\|
\leq C\|g(\cdot,s)\|^2\\
&\leq C e^{\frac{-\lambda_{-1}-2\varepsilon}{p}(2s-2s_0)}
\|g_-(\cdot,s_0)\|^2.
\end{aligned}
\end{equation*}
}
Combining the above inequality with \eqref{eq:projections_ODE_I}, we have 
\begin{equation*}
\frac{\ud }{\ud s}\|g_-(\cdot, s)\| \geq  -\frac{\lambda_{-1}}{p}\|g_-(\cdot, s)\| - C e^{\frac{-\lambda_{-1}-2\varepsilon}{p}(2s-2s_0)}\|g_-(\cdot, s_0)\|^2,\quad \forall s \leq s_0.
\end{equation*}
The choice $2a>-\lambda_{-1}/p$ makes the forcing term integrable after multiplication by the corresponding integrating factor. By the Gronwall inequality again, we have
\begin{equation*}
\|g_-(\cdot, s)\| \leq C e^{\frac{-\lambda_{-1}}{p}s}\|g_-(\cdot, s_0)\|,\quad \forall s \leq s_0,
\end{equation*}
and so 
\begin{equation*}
\|g(\cdot, s)\| \leq 2\|g_-(\cdot, s)\| \leq C e^{\frac{-\lambda_{-1}}{p}s}\|g_-(\cdot, s_0)\|\leq C e^{\frac{-\lambda_{-1}}{p}s}\|g(\cdot, s_0)\|,\quad \forall s \leq s_0.
\end{equation*}
Since $\|g\| = \|h\|_{L^2_{p-1}(\Omega)}$, we get
\begin{equation*}
\|h(\cdot, s)\|_{L^2_{p-1}(\Omega)} \leq C e^{\frac{-\lambda_{-1}}{p}s},\quad \forall s \leq s_0.
\end{equation*}
Using Lemma \ref{lm:C3_estimate}, we have
\begin{equation*}
\|h(\cdot, s)\|_{C^3(\overline{\Omega})}
\leq C\|h(\cdot, s-1)\|_{L^2_{p-1}(\Omega)}
\leq C e^{\frac{-\lambda_{-1}}{p}s},\quad \forall s \leq s_0.
\end{equation*}
Consequently, for all $s \leq s_0$,
\begin{equation*}
\|g(\cdot, s)\|_{C^2(\overline{\Omega})} \leq C\|h(\cdot, s)\|_{C^3(\overline{\Omega})} \leq C e^{\frac{-\lambda_{-1}}{p}s},
\end{equation*}
which is the sharp version of \eqref{eq:exp_decay}.
{After enlarging the trajectory-dependent constant $C$, the relevant
estimate extends from $s\leq s_0$ to the fixed interval $s<-1$.
In the polynomial alternative, a nonstationary trajectory cannot meet
$v_{-\infty}$ at a finite time: otherwise
\eqref{eq:energy-derivative-identity}, together with
$F[v(\cdot,s)]\to F[v_{-\infty}]$ as $s\to-\infty$, would force the
trajectory to be stationary.  Hence the lower bound also extends across
the compact interval $[s_0,-1]$ after changing $C$.}
This completes the proof of Theorem \ref{thm:asymptotic_behaviour}.
\end{proof}

{Finally, we prove Theorem \ref{thm:ancient-rigidity}.}

\begin{proof}[Proof of Theorem \ref{thm:ancient-rigidity}]
We first assume condition \textup{(i)} and set $S=v_{-\infty}$.  If
$g=v/S-1$ vanishes identically, there is nothing to prove.  Otherwise,
by integrability, \eqref{eq:exp_decay} holds, so the center-dominant alternative
\eqref{eq:caseI}, which has the lower bound $\|g(\cdot,s)\|\geq
C(-s)^{-1}$, is impossible.  Hence the unstable component dominates and,
for all sufficiently negative $s$,
\begin{equation}\label{eq:index-one-dominance}
\|g(\cdot,s)\|\leq 2\|g_-(\cdot,s)\|,
\qquad
\|\mathcal N(g(\cdot,s))\|\leq C\|g(\cdot,s)\|^2.
\end{equation}

We first relate the usual Morse index to the weighted spectral decomposition
used above.  Since $S>0$ in $\Omega$, for every
$0\neq\phi\in H_0^1(\Omega)$ the denominator in the
weighted Rayleigh quotient
\[
\frac{\mathcal Q_S(\phi)}{\displaystyle\int_\Omega
S^{p-1}\phi^2\,dx}
\]
is positive.  By the min--max characterization of the weighted
eigenvalues, the number of
negative eigenvalues of the weighted problem, counted with multiplicity,
equals the Morse index of $S$ and is thus one.  Moreover,
\[
\mathcal L_S S=(1-p)S^{p-1}S.
\]
Consequently, its negative eigenspace is $\operatorname{span}\{S\}$.  In the
relative variables, this becomes
\[
\mathcal A\mathbf 1=(1-p)\mathbf 1,
\qquad \mathscr X_- =\operatorname{span}\{\mathbf 1\}.
\]
Write
$g_-(\cdot,s)=a(s)\mathbf 1$.  Projecting the equation for $g$ onto
$\mathscr X_-$ and using \eqref{eq:index-one-dominance}, we obtain
\begin{equation}\label{eq:index-one-amplitude}
a'(s)-\frac{p-1}{p}a(s)=O(a(s)^2).
\end{equation}
If $a(s_*)=0$ at some sufficiently negative time, then
by \eqref{eq:index-one-dominance}, $g(\cdot,s_*)=0$.  Since
$F[v(\cdot,s)]\to F[S]$ as $s\to-\infty$, it follows from the energy
identity \eqref{eq:energy-derivative-identity} that $v\equiv S$.  Thus, in the
nonstationary case, $a$ has a fixed sign.  Setting
$\kappa=(p-1)/p$ and integrating \eqref{eq:index-one-amplitude}, we find
some $C_*\neq0$ such that
\begin{equation}\label{eq:index-one-a-asymptotics}
a(s)=C_*e^{\kappa s}+o(e^{\kappa s})
\qquad\text{as }s\to-\infty.
\end{equation}
Indeed, \eqref{eq:exp_decay} makes the error in
$(\log|a|)'=\kappa+O(|a|)$ integrable on $(-\infty,s_0]$.

By the projected equations on $\mathscr X_0$ and $\mathscr X_+$,
\eqref{eq:index-one-dominance}, \eqref{eq:index-one-a-asymptotics}, and the
positive spectral gap on $\mathscr X_+$, we have
\[
\|g_0(\cdot,s)\|+\|g_+(\cdot,s)\|=O(e^{2\kappa s}).
\]
The parabolic estimates used above upgrade this estimate to the relative
$C^2$ topology.  Consequently,
\begin{equation}\label{eq:index-one-g-asymptotics}
g(x,s)=C_*e^{\kappa s}+o(e^{\kappa s})
\end{equation}
uniformly in the relative $C^2$ sense.

It remains to eliminate the time-translation mode.  Since
$e^{\kappa s}=(-t)^{-1}$, it follows from
\eqref{eq:index-one-g-asymptotics} that
\[
u(x,t)=U_0(x,t)\left(1+\frac{C_*}{-t}+o(|t|^{-1})\right),
\qquad
U_0(x,t)=\left[\frac{p-1}{p}(-t)\right]^{\frac1{p-1}}S(x).
\]
For $T\in\mathbb R$, let
\[
U_T(x,t)=\left[\frac{p-1}{p}(T-t)\right]^{\frac1{p-1}}S(x),
\qquad t<T.
\]
Then, uniformly in the relative topology,
\begin{equation}\label{eq:index-one-comparison-expansion}
\frac{u(x,t)-U_T(x,t)}{U_0(x,t)}
=\frac{C_*-T/(p-1)}{-t}+o(|t|^{-1})
\qquad\text{as }t\to-\infty.
\end{equation}
If $C_*>0$, choose $0<T<(p-1)C_*$.  By
\eqref{eq:index-one-comparison-expansion}, we have
$U_T(\cdot,t_0)\leq u(\cdot,t_0)$ for some sufficiently negative $t_0$.
By the comparison principle up to the extinction time $0$ of $u$, we have
$0<U_T(\cdot,0)\leq u(\cdot,0)=0$ in $\Omega$, a contradiction.
If $C_*<0$, choose
$(p-1)C_*<T<0$.  For some sufficiently negative $t_0<T$, we then have
$u(\cdot,t_0)\leq U_T(\cdot,t_0)$.  By comparison up to the extinction time
$T$ of $U_T$, we have
$0<u(\cdot,T)\leq U_T(\cdot,T)=0$ in $\Omega$, again a
contradiction.  Therefore $g\equiv0$, and under condition \textup{(i)} we obtain the
claimed separable solution.

We finally assume condition \textup{(ii)}.  In the Sobolev-subcritical
range there is a positive solution $v_\infty$ to
\eqref{eq:elliptic-equation} such that
\[
v(\cdot,s)\longrightarrow v_\infty
\quad\text{in }C^2(\overline\Omega)\quad\text{as }s\to+\infty;
\]
see \cite{JX23a}.  By the equal-energy assumption, we have
$F[v_\infty]=F[v_{-\infty}]$.  Since $F[v(\cdot,s)]$ is nonincreasing,
it is constant, and it follows from \eqref{eq:energy-derivative-identity} that
$\partial_s v\equiv0$.  This proves the conclusion under condition
\textup{(ii)} and completes the proof.
\end{proof}

\section*{Declaration}
\noindent\textbf{Data availability:} Data availability is not applicable to this article as no new data were created or analyzed in this study.

\noindent\textbf{Conflict of interest:} The authors declare that they have no conflicts of interests.

\end{document}